\newtheorem{theorem}{Theorem}[section]
\newtheorem{theorem*}{Theorem}[section]
\newtheorem{lemma}[theorem]{Lemma}
\newtheorem{corollary}[theorem]{Corollary}
\newtheorem{proposition}[theorem]{Proposition}
\theoremstyle{definition}
\newtheorem{definition}[theorem]{Definition}
\newcommand{\R}{\mathbb{R}}              
\newcommand{\Z}{\mathbb{Z}}              
\newcommand{\norm}[1]{\Vert{#1}\Vert}    
\newcommand{\e}{\mathbf{e}}              
\newcommand{\f}{\mathbf{f}}              
\newcommand{\interval}{\mathbb{I}}       
\newcommand{\chains}{C_{\ast}}           
\newcommand{\cochains}{C^{\ast}}         
\newcommand{\chain}[1]{C_{#1}}           
\newcommand{\sh}{\mathfrak{sh}}          
\newcommand{\Or}{{\rm Det}}              
\newcommand{\cman}{\mathrm{cMan}}        
\newcommand{\cI}{\mathcal{I}}            
\newcommand{\Hom}{\textup{Hom}}          
\newcommand{\init}{{\rm Init}}           
\newcommand{\term}{{\rm Term}}           
\newcommand{\vertices}{{\rm Vert}}       
\newcommand{\uW}{\underline{W}}          
\newcommand{\uV}{\underline{V}}          
\newcommand{\Ginit}{G^{\mathrm{init}}}   
\newcommand{\Gterm}{G^{\mathrm{term}}}   
\newcommand{\into}{\hookrightarrow}
\newcommand{\xr}{\xrightarrow}
\newcommand{\sms}{\smallsmile}
\newcommand{\pf}{\pitchfork}
\renewcommand{\th}{^{\mathrm{th}}}
\begin{document}

\title{Flowing from intersection product to cup product}

\author[G. Friedman]{Greg Friedman}
\address{Department of Mathematics, Texas Christian University}
\email{g.friedman@tcu.edu}

\author[A. Medina-Mardones]{Anibal M. Medina-Mardones}
\address{Max Planck Institute for Mathematics, Bonn, Germany}
\email{ammedmar@mpim-bonn.mpg.de}
\address{Department of Mathematics, University of Notre, Notre Dame, IN, USA}
\email{amedinam@nd.edu}
\thanks{A.M-M. acknowledges financial support from Innosuisse grant \mbox{32875.1 IP-ICT - 1} and the hospitality of the Laboratory for Topology and Neuroscience at EPFL where part of this work developed.}

\author[D. Sinha]{Dev Sinha}
\address{Mathematics Department, University of Oregon}
\email{dps@uoregon.edu}
\thanks{D.S. acknowledges financial support from the Simons Foundation.}

\subjclass[2020]{55N45, 57R19, 57R25}
\keywords{Geometric cohomology, intersection product, cup product, vector field flow, manifolds with corners}

\begin{abstract}
	We use a vector field flow defined through a cubulation of a closed manifold to reconcile the partially defined commutative product on geometric cochains with the standard cup product on cubical cochains, which is fully defined and commutative only up to coherent homotopies. The interplay between intersection and cup product dates back to the beginnings of homology theory, but, to our knowledge, this result is the first to give an explicit cochain level comparison between these approaches.
\end{abstract}

\maketitle
\tableofcontents


\section{Introduction} \label{S: intro}

de Rham cohomology has long been lauded as a perfect cohomology theory by champions such as Sullivan \cite{sullivan1977infinitesimal} and Bott \cite{BoTu82}.
A combination of geometric underpinning and commutativity at the cochain level make it a remarkably effective tool for many applications of rational homotopy theory.
Over the integers, submanifolds and intersection in various settings provide geometrically meaningful cochains \cite{Lipy14} with a partially defined commutative 
product \cite{Joyc15, FMS-foundations}.
But the obstructions to commutativity witnessed by Steenrod operations show that intersection alone cannot capture the cochain quasi-isomorphism type multiplicatively.

In this paper we start to marry two imperfect theories, relating multiplicative structures of, on one hand, geometric cochains defined using manifolds with corners, and, on the other, standard cubical cochains.
The comparison chain map $\cI$ between these ``analog" and ``digital" presentations of ordinary cohomology of a closed manifold is defined through counting intersections of geometric cochains with a given cubulation. With the proper definitions, which we set up in detail Section \ref{S: geometric cochains}, the map $\cI$ is a quasi-isomorphism.
In the domain of this comparison map there is a natural partially defined product given by transverse intersection, whereas in the target the product structure is induced from the Serre diagonal, a cubical analogue of the Alexander-Whitney diagonal in the simplicial setting.
Both of these multiplicative structures induce the standard cup product in cohomology, but at the cochain level they are not immediately compatible.
We bind them using the flow of a vector field canonically defined using the cubulation.

\begin{figure}[h] 
	\centering
	\hfill
	\begin{subfigure}[b]{0.45\textwidth}
		\includegraphics[scale=.6]{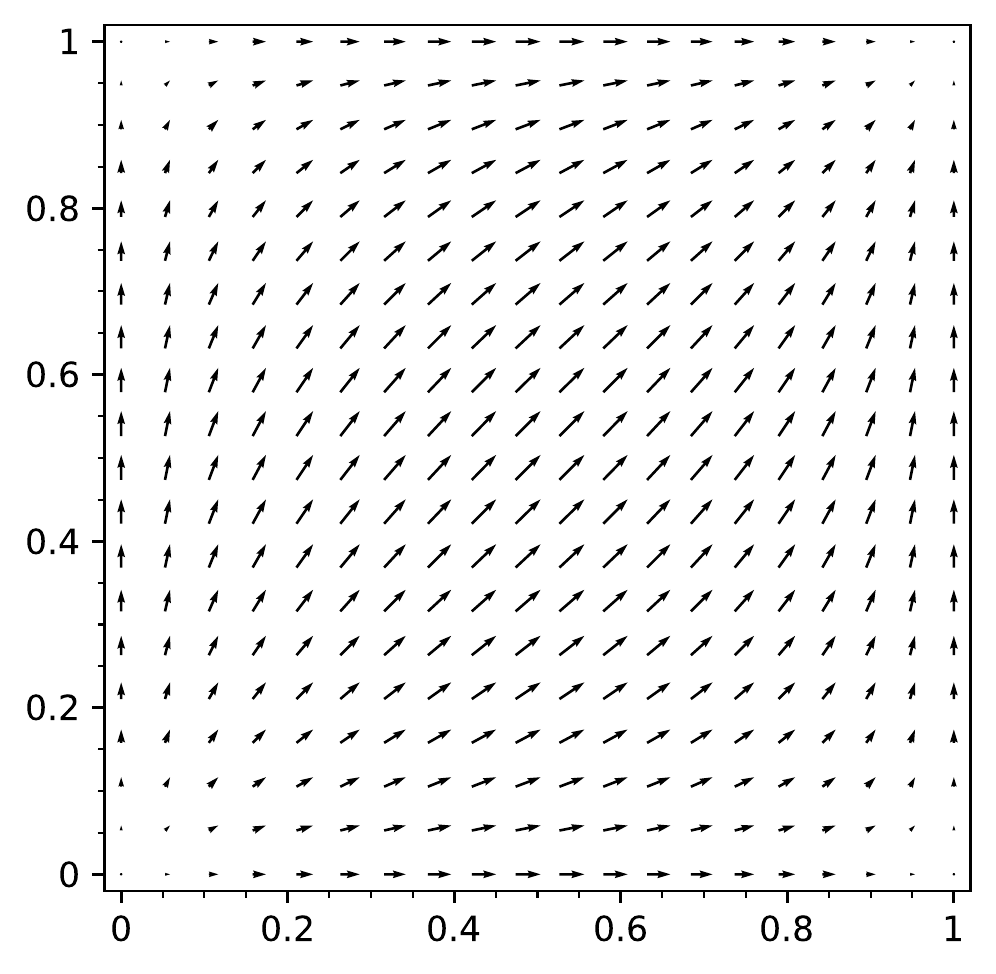}
		\caption{The logistic vector field on $\interval^2$. 
			Because logistic vector fields are consistent across cubical faces, this vector field extends to any cubulated surface.}
	\end{subfigure}
	\hspace{0.2 in}
	\begin{subfigure}[b]{0.45\textwidth}
		\includegraphics[scale=.66]{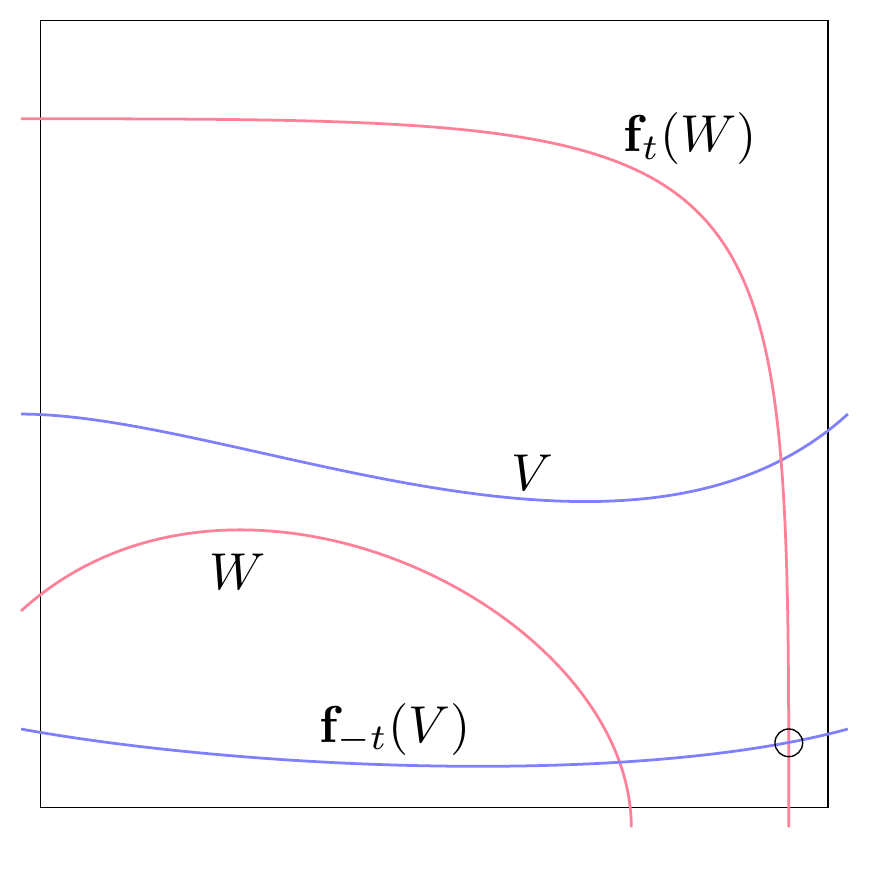}
		\caption{The intersection of $W$ and $V$ is not compatible with the corresponding cubical cup product, but that of $\f_t(W)$ and $\f_{-t}(V)$ is.}
	\end{subfigure}
	\caption{The logistic vector field and the impact of its flow on intersections.}
	\label{F: logistic}
\end{figure}

The basic idea of the construction is given in Figure~\ref{F: logistic}. The logistic vector field is pictured in part (A) of the figure with its time $t$ flow denoted by 
$\f_t$.  Part (B) of the figure illustrates the main idea of how the flow reconciles multiplications.
Here $W$ and $V$ are manifolds with corners mapping to a closed manifold $M$ which we assume cubulated, focusing the picture on a single square.
As explained in Section~\ref{S: geometric cochains}, such maps represent geometric cochains of $M$, and integer coefficients can be considered if additional (co)orientation data is included.
Geometric cochains that are transverse to the cubulation, as we are assuming $W$ and $V$ are, define cubical cochains $\cI(W)$ and $\cI(V)$ by a count of signed intersection numbers with the cubical faces.
The picture shows that with mod-two coefficients $\cI(W)$ evaluates to 1 on the bottom and left edges, while $\cI(V)$ evaluates to 1 on the left and right edges.
As explained in Section~\ref{S: cubical topology}, because the bottom edge and right edge form an ``initial-terminal'' pair of faces of the square, the Serre diagonal construction gives that $\cI(W) \sms \cI(V)$ evaluates to 1 on the square.
As $W$ and $V$ do not intersect each other, their intersection product evaluates to 0 on the square, and thus disagrees with the cup product at the cochain level.
Yet, for $t$ sufficiently large, $\f_t(W)$ and $\f_{-t}(V)$ intersect while maintaining $\cI(W) = \cI(\f_t(W))$ and $\cI(V) = \cI(\f_{-t}(V))$, now yielding agreement between the intersection and cup products at the cochain level.

Our main result is that logistic flow performs such reconciliation in general.
We write $W \times_MV$ for the fiber product of $W$ and $V$ over $M$, which gives rise to the partially defined product on geometric cochains,
defined when $W$ and $V$ are transverse.
With this notation we now present the main result of this work.
 
\begin{theorem} \label{T:main1}
	Let $M$ be a cubulated closed manifold and $W$ and $V$ two compact co-oriented manifolds with corners over $M$ which are transverse to the cubulation. Then, for $t$ sufficiently large:
	\begin{enumerate}
		\item $\f_t(W)$ and $\f_{-t}(V)$ are transverse and
		\begin{equation*}
		\cI\left(\f_t(W) \times_M \f_{-t}(V)\right) = \cI\left(\f_t(W)\right) \sms \cI\left(\f_{-t}(V)\right).
		\end{equation*}
		\item $\f_{-t}(W)$ and $\f_t(V)$ are transverse and
		\begin{equation*}
		\cI\left(\f_{-t}(W) \times_M \f_t(V)\right) = (-1)^{|W||V|} \, \cI\left(\f_t(V)\right) \sms \cI\left(\f_{-t}(W)\right),
		\end{equation*}
		where $|W||V|$ is the product of the codimensions of $W$ and $V$ over $M$.
	\end{enumerate}
\end{theorem}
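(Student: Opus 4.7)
The plan is to reduce the cochain identity to a single-cube verification and then to analyze each cube using the local structure of the logistic flow near its vertices. Both sides of (1) are cubical cochains of degree $|W|+|V|$, so I would fix a cube $c$ of $M$ of that dimension and check equality on $c$. Part (2) should then follow formally from (1) by swapping the roles of $W$ and $V$ together with the graded commutativity of the fiber product $V \times_M W = (-1)^{|W||V|}\, W \times_M V$, the sign being precisely the commutativity defect of $\sms$.

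An opening observation is that every face of the cubulation is flow-invariant as a set and $\f_t$ is a diffeomorphism, so $\cI(\f_t(W))$ agrees with $\cI(W)$ on every cube of $M$. Using the Serre diagonal on $c \cong [0,1]^n$, whose expansion pairs an initial face $E_A$ with the complementary terminal face $F_{A^c}$ for each subset $A \subseteq \{1,\ldots,n\}$ with $n = |W|+|V|$, I expect the right-hand side of (1) to evaluate on $c$ as
\begin{equation*}
	\bigl(\cI(\f_t(W)) \sms \cI(\f_{-t}(V))\bigr)(c) \;=\; \sum_{|A|=|V|} \epsilon_A \, \#(W \cap E_A) \cdot \#(V \cap F_{A^c}),
\end{equation*}
with signs $\epsilon_A$ coming from the co-orientations and the Serre shuffle.

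Next I would produce this same sum geometrically from $\f_t(W) \cap \f_{-t}(V)$ inside $c$. On $c = [0,1]^n$ the logistic vector field has a hyperbolic fixed point at each vertex $v_A = (0^A, 1^{A^c})$; its stable manifold is the open initial face underlying $E_A$, with tangent directions in $A^c$, and its unstable manifold is the open terminal face underlying $F_{A^c}$, with tangent directions in $A$. By transversality and dimension counts, the only face contributions relevant for $c$ come from $A$ with $|A|=|V|$, in which case $W \cap E_A$ and $V \cap F_{A^c}$ are finite point sets, and for each pair $(p, q) \in (W \cap E_A) \times (V \cap F_{A^c})$ one has $\f_t(p) \to v_A$ along $E_A$ and $\f_{-t}(q) \to v_A$ along $F_{A^c}$.

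The heart of the argument will be a local analysis near each $v_A$. Writing $W$ near $p$ as a graph over $E_A$ in the normal $A$-coordinates, the flow stretches the $A$-coordinates outward from $0$ while contracting the $A^c$-coordinates toward $1$, so the tangent space of $\f_t(W)$ at points near $v_A$ converges to $\mathrm{span}(\partial_{x_i} : i \in A)$; symmetrically, the tangent space of $\f_{-t}(V)$ converges to $\mathrm{span}(\partial_{x_i} : i \in A^c)$. These limit subspaces are complementary in $T_{v_A} c$, so an implicit-function argument should yield, for $t$ sufficiently large, a unique transverse intersection point of $\f_t(W)$ and $\f_{-t}(V)$ in the interior of $c$ near $v_A$ for each pair $(p, q)$. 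Summing over $A$ and over pairs would reproduce the Serre expansion. The hard part will be the sign bookkeeping: one must check that the co-orientation of each such intersection point, transported by the flow from the co-orientations of $W$ and $V$, matches $\epsilon_A$, which reduces to a linearized orientation calculation at $v_A$ using the splitting $T_{v_A} c = T_p E_A \oplus T_q F_{A^c}$. Compactness of $M$ then lets me choose a single $t$ that works simultaneously for the finitely many cubes of $M$.
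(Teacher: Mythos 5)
Your proposal follows the paper's approach closely: reduce to a single cube via locality, expand the cup product via the Serre diagonal into reciprocal pairs of initial and terminal faces, show that the logistic flow concentrates the relevant portions of $W$ and $V$ near each vertex $v$ where $\f_t(W)$ and $\f_{-t}(V)$ meet in a unique transverse point by a graph/implicit-function/linearization argument, match signs by a linear-algebra computation at $v$, and derive part (2) from part (1) via graded commutativity of the fiber product. The paper devotes separate and nontrivial arguments to two points your sketch passes over quickly --- global transversality of $\f_t(W)$ and $\f_{-t}(V)$ over every cube, established by an induction over terminal faces of all dimensions using an aspect-ratio bound on the flow's Jacobian, and the absence of intersections away from vertex neighborhoods, established by showing the complements of the graph-like pieces of $W$ and $V$ flow into a skeleton that the other manifold avoids --- but these are consistent with, and would be needed to complete, the outline you give.
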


It is classically known that the intersection and cup products are Poincar\'e dual at the level of homology and cohomology, but, to our knowledge, this result is the first to give an explicitly connection between these products at the cochain level. 

Turning to applications, manifold cochains have primarily been developed as a parallel to, or for application in, string topology \cite{chas1999string}, Floer theory \cite{Lipy08}, and other types of moduli questions \cite{BoJo17}.
More work needs to be done for our viewpoint to connect with these  fields, but as they stand, the results of this paper are applicable, for example, in using the bar construction on cochains to define knot invariants through induced maps on configuration spaces \cite{BCSS05, SiWa13, BCKS17}. 

Since the logistic flow interpolates between commutative and noncommutative worlds, in future work we plan to connect it to cup-$i$ products \cite{steenrod1947cocycles, medina2018axiomatic} and higher derived structures \cite{medina2020cartan, medina2020adem}.
More generally, as has been done for combinatorial cochains \cite{mcclure2003multivariate, berger2004combinatorial, medina2020prop1, medina2021cubical}, our work invites the possibility of defining \mbox{$E_\infty$ structures} on geometric cochains and the description of cohomology operations at the cochain level \cite{medina2020odd} using geometric language.
We are particularly interested in building on the work of Mandell \cite{Mand01} and others to model homotopy types of manifolds via geometric cochains.

The question of relating vector field flows to finer cochain structures has also recently arisen in mathematical physics \cite{Thor18, Tata20}, but the vector fields in \cite{Tata20} are non-continuous.
Our flow is globally smooth and thus should serve as a strong bridge between physical models, geometry, and topology.

There are two variants of Theorem~\ref{T:main1} which are likely of interest but which will not be addressed in this paper.
First, one can use simplices instead of cubes.
Working with cubulations simplifies our treatment since the logistic flow on standard cubes is given coordinate-wise by the logistic flow on the interval.
But the simplicial version of Theorem~\ref{T:main1} can be proven for simplicial cochains with the Alexander-Whitney product, using the results of this paper and the model of standard simplices as subsets of cubes with non-increasing coordinates.  
We leave the details to the interested reader.
Secondly, we conjecture that there is a version of Theorem~\ref{T:main1} in which some finite subcomplex of geometric cochains maps to a version of transverse smooth singular cochains.
Precise formulation of such a comparison map is one of the topics we plan to address in \cite{FMS-foundations}, so for now we leave this idea undeveloped.

We begin the paper by reviewing in Section~\ref{S: cubical topology} basic material on cubical structures.
We then describe geometric cochains defined using manifolds with corners, a notion that arises naturally when considering fiber products of manifolds with boundary. 
But for manifolds with corners, the boundary of a boundary is not empty, so one must impose a quotient at the cochain level to obtain a cochain complex.
In Section~\ref{S: geometric cochains}, we review the needed parts of the this theory as given in \cite{FMS-foundations} and based on the original definition of Lipyanskiy \cite{Lipy14}.
In Section~\ref{S: logistic}, we then develop logistic vector fields, for which the analysis is thankfully simple to manage.
These vector fields in a sense give a smooth extension of the cubical poset structure, the key combinatorial structure used in defining the cubical cup product.
We put everything together in Section~\ref{S: flow comparison theorem} to prove our main comparison theorem, stated above, which intuitively says that after sufficient time flow intersection yields a ring homomorphism from geometric cochains to cubical cochains.

\section*{Acknowledgments}

The authors thank Mike Miller, for pointing us to \cite{Lipy14}, and Dominic Joyce, for answering questions about his work. 


\section{Cubical topology} \label{S: cubical topology}

\subsection{Cubical complexes}

The interpolation we develop between combinatorial and smooth topology proceeds through a cubulation of a manifold -- that is, a cubical complex homeomorphic to the manifold.
Such a structure is less common than that of a triangulation, so we present basic definitions,  in a form best suited to our applications. 

For simplicial complexes, vertices can always be given a partial order that restricts to a total order on each simplex, providing a way to identify each simplex with the standard simplex.
Furthermore, when two simplices meet along a common face, the induced ordering data for that face is consistent.
Categorically, such data is reflected in the fact that every simplicial complex is the realization of some simplicial set.
There is a parallel to this in the cubical setting, namely data required to compatibly identify each $n$-cube of a cubulation with the standard $n$-cube.

We thus begin with a formulation of cubical complexes containing such extra ordering data, as well as a description of the key features of cubical structures that will be needed for the analysis of our vector field flows in Section~\ref{S: logistic}.

The \textbf{standard $n$-cube} is the subset of $\R^n$ defined by
\begin{equation*}
\interval^n = \big\{ (x_1, \dots, x_n) \in \R^n\ |\ 0 \leq x_i \leq 1 \big\},
\end{equation*}
with the standard topology but with our preferred metric being the $L^\infty$ metric.
Denote $\{1, \dots, n\}$ by $\overline{n}$.
A partition $F = (F_0, F_{01}, F_1)$ of $\overline n$ defines a \textbf{face} of $\interval^n$ given by
\begin{equation*}
\{(x_1, \dots, x_n) \in \interval^n\ |\ \forall \varepsilon \in \{0, 1\},\ i \in F_\varepsilon \Rightarrow x_i = \varepsilon\}.
\end{equation*}

We abuse notation and use the same notation for the partition and its associated face, referring to coordinates $x_i$ with $i \in F_{01}$ as \textbf{free} 
and to the others as \textbf{bound}.
The \textbf{dimension} of $F$ is its number of free coordinates, and as usual the faces of dimension $0$ and $1$ are called vertices and edges, respectively.
The set of vertices of $\interval^n$ is denoted by $\vertices(\interval^n)$.

If $F_1 = \emptyset$, then we say that $F$ is an \textbf{initial} face; if $F_0 = \emptyset$, then we say that $F$ is a \textbf{terminal} face.
Let $\init_k(\interval^n)$ be the union of initial faces of dimension $k$ and $\term_k(\interval^n)$ the union of terminal faces of dimension $k$.

\begin{figure}[!h]
	\begin{subfigure}[b]{0.3\textwidth}
		\centering
		\includegraphics[scale=.7]{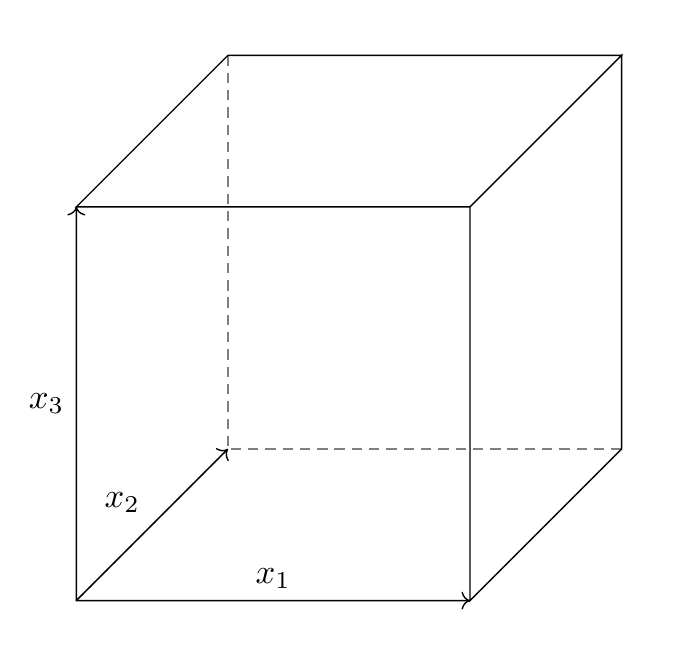}
	\end{subfigure}
	\hfill
	\begin{subfigure}[b]{0.3\textwidth}
		\centering
		\includegraphics[scale=.7]{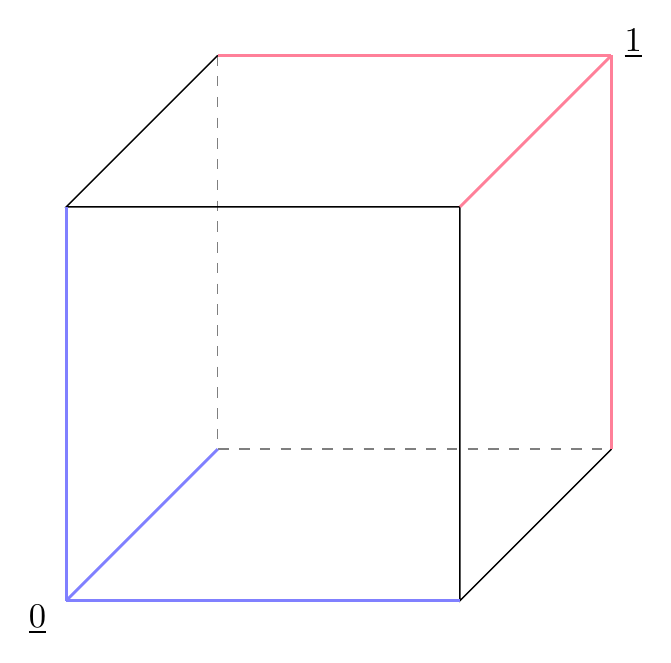}
	\end{subfigure}
	\hfill
	\begin{subfigure}[b]{0.3\textwidth}
		\centering
		\includegraphics[scale=.7]{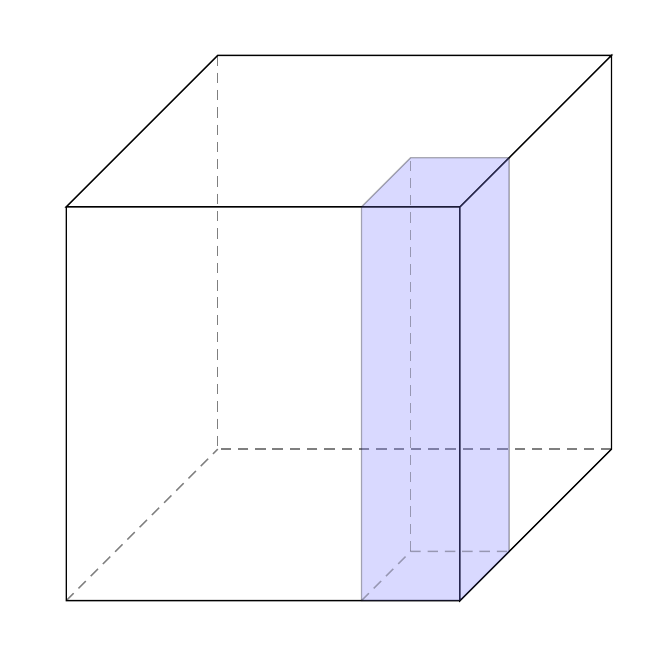}
	\end{subfigure}
	\caption{On the left we have a representation of the standard $3$-cube $\interval^3$. 
	In the center we depict $\init_1(\interval^3)$, the initial 1-dimensional faces, in blue and $\term_1(\interval^3)$, the terminal 1-dimensional faces, in red. On the right we depict an  $\epsilon$-neighborhood of $F = (\{2\}, \{3\}, \{1\})$ in the $L^\infty$ metric.}
\end{figure}

The maps $\delta_i^\varepsilon \colon \interval^{n-1} \to \interval^{n}$ are defined for $\varepsilon \in \{0, 1\}$ and $i \in \overline{n}$ by
\begin{align*}
\delta_i^\varepsilon(x_1, \dots, x_{n-1}) & = (x_1, \dots, x_{i-1}, \varepsilon, x_i, \dots, x_{n-1}),
\end{align*}
and any composition of these is referred to as a \textbf{face inclusion map}.

For $v \in \vertices(\interval^n)$ all coordinates are bound -- that is, $v_{01} = \emptyset$. Thus
 $v$ is determined by the partition of $\overline n$ into $v_0$ and $v_1$, so
we have a bijection from the set of vertices of $\interval^n$ to the power set $\mathcal P(\overline n)$ of $\overline n$, sending $v$ to $v_1$.
The inclusion relation in the power set induces a poset structure on $\vertices(\interval^n)$ given explicitly by
\begin{equation*}
v = (\epsilon_1, \dots, \epsilon_n) \leq w = (\eta_1, \dots, \eta_n) \iff \forall i,\ \epsilon_i \leq \eta_i.
\end{equation*}
We will freely use the identification of these posets.
The smallest and largest elements in $\mathcal P(\overline{n})$, which we denote $\underline{0}$ and $\underline{1}$, are the 
initial and terminal vertices. Face embedding maps induce order-preserving maps at the level of vertices.

An \textbf{interval subposet} of $\mathcal P(\overline n)$ is one of the form $[v, w] = \{u \in \mathcal P(\overline n)\ |\ v \leq u \leq w\}$ for a pair of vertices $v \leq w$. There is a canonical bijection between faces of $\interval^n$ and such subposets, associating to $[v, w]$ the face $F$ defined by $F_\varepsilon = \{i \in \overline{n}\ |\ v_i = w_i = \varepsilon\}$ for $\varepsilon \in \{0, 1\}$.

The posets $\{\mathcal P(\overline n)\}_{n \geq 1}$ play the role for cubical complexes that finite totally ordered sets play for simplicial complexes.
Recall for comparison that one definition of an abstract ordered simplicial complex is as a pair $(V, X)$, where $V$ is a poset and $X$ is a collection of subsets of $V$, each with an induced total order, such that all singletons are in $X$ and subsets of sets in $X$ are also in $X$.
We have the following cubical analogue.	

\begin{definition}\label{D:cubical}
	A \textbf{cubical complex} $X$ is a collection $\{ \sigma \}$ of finite non-empty subsets of a poset 
	$\vertices(X)$, together with, for each $\sigma \in X$, an order-preserving bijection $\iota_\sigma \colon \sigma \to \mathcal P(\overline n)$ for some $n$, such that:
	\begin{enumerate}
		\item For all $v \in \vertices(X)$, $\{v\} \in X$,
		\item For all $\sigma \in X$ and all $[u,w] \subset \mathcal P(\overline n)$ the set $\rho = \iota_\sigma^{-1}([u,w]) \in X$ and the following commutes
		\begin{equation*}
		\begin{tikzcd} [row sep = tiny, column sep= small]
		\sigma \arrow[rr, "\iota_\sigma"] && \mathcal P(\overline n) \\
		& [-5pt] {[}u,w{]} \arrow[ur, hook] & \\
		\rho \arrow[uu, hook] \arrow[rr, "\iota_\rho"'] && \mathcal P(\overline m). \arrow[ul, "\cong"'] \arrow[uu, dashed] 
		\end{tikzcd}
		\end{equation*}
	\end{enumerate}
	We refer to an element $\sigma \in X$ as a \textbf{cube} of $X$, refer to $\iota_\sigma \colon \sigma \to \mathcal P(\overline{n})$ as its \textbf{characteristic map}, 
	and refer to $n$ as its \textbf{dimension}. If $\rho \subseteq \sigma \in X$, we say that $\rho$ is a \textbf{face} of $\sigma$ in $X$. 
	We identify elements in $\vertices(X)$ with the singleton subsets in $X$, referring to them as vertices.
\end{definition}

In analogy with the usual terminology in the simplicial setting, one could call these ``ordered cubical complexes," but we only work with these and have seen little use elsewhere for the unordered version.

Consider the category defined by the inclusion poset of a cubical complex $X$ and the subcategory ${\tt Cube}$ of the category ${\tt Top}$ of topological spaces whose objects are the $n$-cubes, identified with $\interval^n$, and whose morphisms are face inclusions.
The characteristic maps of $X$ define a functor from its poset category to $\mathtt{Cube}$, and we define its \textbf{geometric realization} as the colimit of this functor.
A \textbf{cubical structure} or \textbf{cubulation} on a space $S$ is a homeomorphism $h \colon |X| \to S$ from the geometric realization of a cubical complex.
We abuse notation and write $h \circ \iota_{|\sigma|}$ simply as $\iota_\sigma$ for any $\sigma \in X$ when a cubical structure $h \colon |X| \to S$ is understood.

Our definition sits between cubical sets \cite{jardine2002cubical} and cellular subsets of the cubical lattice of $\R^\infty$ \cite{kaczynski2006computational},
analogously to the way that abstract ordered simplicial complexes sit between simplicial sets and simplicial complexes.
The geometric realization construction makes our definition and the cubical lattice definition essentially equivalent.
Just as is the case for simplicial complexes, faces in cubical complexes are completely determined by their vertices.

\begin{figure}[h]
	\begin{subfigure}{.4\textwidth}
		\centering
		\includegraphics{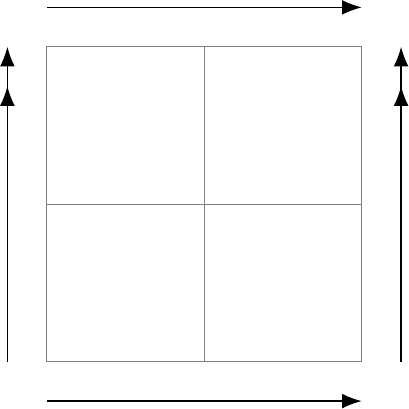}
		\caption{\textbf{Not} a cubulation of the torus}
	\end{subfigure}\qquad 
	\begin{subfigure}{.4\textwidth}
		\centering
		\includegraphics{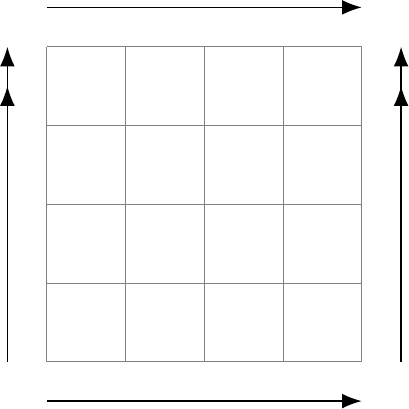}
		\caption{A cubulation of the torus}
	\end{subfigure}
	\caption{The first cellular decomposition of a torus pictured above does not represent the geometric realization of a cubical complex, as each square has the same set of vertices. On the right, each square can be coherently identified with the standard square with initial vertex in the lower left corner and final vertex in the upper right corner. Therefore, (B) depicts a cubical structure on the torus.}
	\label{F: cubical structure}
\end{figure}

The vector field flow we define on cubulated manifolds in Section~\ref{S: logistic} can be viewed as a smooth extension of the cubical poset structure. 
We refine our description of this poset structure through identifying ``previous'' and ``next'' faces in a cube.

\begin{definition} \label{D: F decomposition}
	Let $F = (F_0, F_{01}, F_1)$ be a face of $\interval^n$. The $F$-\textbf{decomposition} of $\interval^n$ is the isomorphism $\interval^n \cong F^- \times F \times F^+$ where $F^- = (F_0 \cup F_{01}, F_1, \emptyset)$ and $F^+ = (\emptyset, F_0, F_1 \cup F_{01})$.
\end{definition}

An alternate definition of $F^+$ is as the face whose initial vertex is the terminal vertex of $F$ and whose terminal vertex is $\underline{1}$, 
the terminal vertex of $\interval^n$.Similarly, $F^-$ is the face whose terminal vertex is the initial vertex of $F$ and whose initial vertex is $\underline{0}$, the initial vertex of $\interval^n$. See Figure~\ref{F: decomposition}.

\begin{figure}[h!]
	\begin{subfigure}[b]{0.35\textwidth}
		\centering
		\includegraphics[scale=.7]{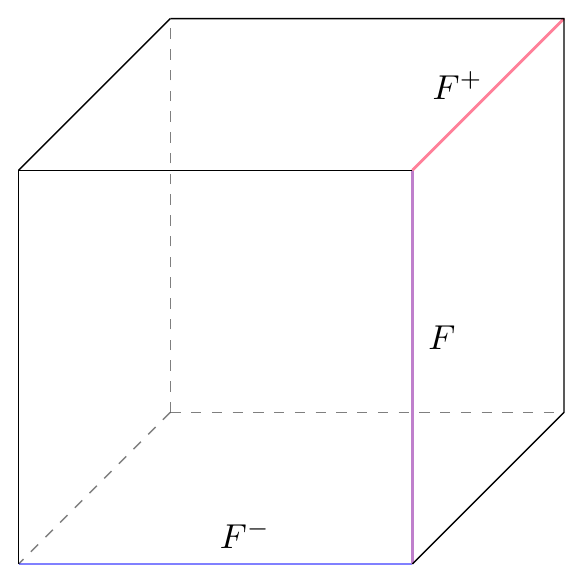}
	\end{subfigure}
	\begin{subfigure}[b]{0.35\textwidth}
		\centering
		\includegraphics[scale=.7]{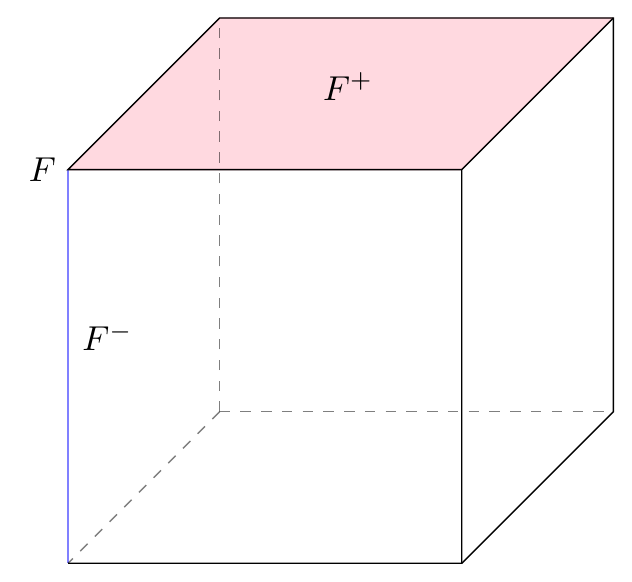}
	\end{subfigure}
	\caption{Examples of $F$-decompositions with $F = (\{2\}, \{3\}, \{1\})$ on the left and $F = (\{1,2\}, \emptyset, \{3\})$ on the right.}
	\label{F: decomposition}
\end{figure}

The special case of $F$-decompositions in which $F = v$, a vertex, merits its own consideration.

\begin{definition}\label{D: reciprocal}
	An ordered pair of faces $(F,F^\prime)$ of $\interval^n$ is said to be \textbf{reciprocal} if there exists a vertex $v$ such that $F = v^-$ and $F^\prime = v^+$. Equivalently, $(F, F')$ is reciprocal if and only if $F$ is initial and $F^\prime = F^+$, or if and only if $F'$ is terminal and $F = (F^{\prime})^-$.
\end{definition}

Consider the ordered set $\{\e_1, \dots, \e_n\}$ where $\e_i = \frac{\partial\ }{\partial x_i}$.
For any face $F$ of $\interval^n$, the ordered subset $\beta_F = \{\e_i\ |\ i \in F_{01}\}$ defines the \textbf{canonical orientation} of $F$.
We define the \textbf{shuffle sign} of $F$, denoted by $\sh(F) \in \{\pm 1\}$, to be $+1$ if the $F$-decomposition isomorphism is orientation preserving and $-1$ if not.
More explicitly, $\sh(F) = +1$ if the concatenation of the ordered sets $\beta_{F^-}$, $\beta_{F}$, and $\beta_{F^+}$ represents the same orientation as $\{\e_1, \dots, \e_n\}$, and $\sh(F) = -1$ otherwise.
This sign plays a key role in our applications, since we work over the ring of integers and this sign occurs in comparing products.

\subsection{Cubical cochains} \label{SS: cubical cochains}

We can also define an ``algebraic realization" for a cubical complex in analogy to its geometric realization. 
Let $\chains(\interval^1)$ be the usual cellular chain complex of the interval with integral coefficients.
Explicitly, $\chain0(\interval^1)$ is generated by the vertices $[\underline{0}]$ and $[\underline{1}]$, and $\chain1(\interval^1)$ is generated by the unique 1-dimensional face, denoted $[\underline{0},\underline{1}]$ in the interval subposet notation. The boundary map is $\partial [\underline{0},\underline{1}]=[\underline{1}]-[\underline{0}]$.

Let $\chains(\interval^n) = \chains(\interval^1)^{\otimes n}$, with differential defined by the graded Leibniz rule.
Given a face inclusion $\delta_i^{\varepsilon} \colon \interval^n \to \interval^{n+1}$ the natural chain map $\chains(\delta_i^{\varepsilon}) \colon \chains(\interval^1)^{\otimes n} \to \chains(\interval^1)^{\otimes n+1}$ is defined on basis elements by
\begin{equation*}
x_1 \otimes \cdots \otimes x_n \mapsto
x_1 \otimes \cdots \otimes [\underline{\varepsilon}] \otimes \cdots \otimes x_n.
\end{equation*}
Regarding a cubical complex $X$ as a functor to $\mathtt{Cube}$, we can compose it with the chain functor above to obtain a functor to chain complexes. 
The complex of \textbf{cubical chains} of $X$, denoted $C_*(X)$, is defined to be the colimit of this composition.
As one would expect, in each degree it is a free abelian group generated by the cubes of that dimension, and its boundary homomorphism sends the
generator associated to a cube to a sum of generators associated to its codimension-one faces with appropriate signs.

The \textbf{cubical cochains} of $X$ (with $\Z$ coefficients) is the chain complex $\cochains(X) = \Hom_\Z(\chains(X), \Z)$.
By abuse, we use the same notation and terminology for an element in $X$, its geometric realization in $|X|$, 
and the corresponding basis elements in $\chains(X)$ and $\cochains(X)$.

We next recall the \textbf{Serre diagonal}. Let $\Delta \colon \chains(\interval^1) \to \chains(\interval^1)^{\otimes 2}$ be defined on basis elements by	
\begin{gather*}	
\Delta([\underline{0}]) = [\underline{0}] \otimes [\underline{0}], \qquad 
\Delta([\underline{1}]) = [\underline{1}] \otimes [\underline{1}], \qquad
\Delta([\underline{0}, \underline{1}]) = [\underline{0}] \otimes [\underline{0}, \underline{1}] + [\underline{0}, \underline{1}] \otimes [\underline{1}].
\end{gather*}	
Then, let
$\Delta \colon \chains(\interval^n) \to \chains(\interval^n)^{\otimes 2}$
be the composite
\begin{equation*}
\begin{tikzcd}
\chains(\interval)^{\otimes n} \arrow[r, "\Delta^{\otimes n}"] & \left( \chains(\interval)^{\otimes 2} \right)^{\otimes n} \arrow[r, "sh"] & \left( \chains(\interval)^{\otimes n} \right)^{\otimes 2},
\end{tikzcd}
\end{equation*}
where $sh$ is the shuffle map that reorders tensor factors so that those in odd positions occur first. More explicitly, using Sweedler's notation, 
if $x_i^{(1)}$ and $x_i^{(2)}$ are defined through the identity
\begin{equation*}	
\Delta(x_i) = \sum x_i^{(1)} \otimes x_i^{(2)},
\end{equation*}	
then
\begin{equation} \label{E: Delta}	
\Delta (x_1 \otimes \cdots \otimes x_n) = 	
\sum \pm \left( x_1^{(1)} \otimes \cdots \otimes x_n^{(1)} \right) \otimes 	
\left( x_1^{(2)} \otimes \cdots \otimes x_n^{(2)} \right),
\end{equation}	
where the sign is determined by the Koszul convention.

The \textbf{cup product} of cochains $\alpha, \beta \in \cochains(X)$ is defined using the Serre diagonal as follows\footnote{
	We follow the convention for evaluation of tensor products of cochains on tensor products of chains given by $(\alpha\otimes \beta)(x\otimes y)=\alpha(x)\beta(y)$. This convention is used for defining the cup product, for example, by Munkres \cite[Section 60]{Mun84}, Hatcher \cite[Section 3.2]{Hat02}, and Spanier \cite[Section 5.6]{Spa81}. But it disagrees with the conventions in Dold \cite[Section VII.7]{Dol72}, where there is a sign coming from the Koszul convention.}:
\begin{equation*}
(\alpha \sms \beta)(c) = (\alpha \otimes \beta)\Delta(c).
\end{equation*}

We will use the following more explicit description of Serre's diagonal.

\begin{proposition} \label{P: diagonal in terms of vertices}
	The map $\Delta \colon \chains(\interval^n) \to \chains(\interval^n)^{\otimes 2}$ satisfies
	\begin{equation*}
	\Delta \big([\underline 0, \underline 1]^{\otimes n}\big) \ =
	\sum_{v \in \vertices(\interval^n)} \sh(v) \cdot v^- \otimes v^+.
	\end{equation*}
\end{proposition}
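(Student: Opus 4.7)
The plan is to unwind the definition of $\Delta$ on $\chains(\interval^n)$ as the composite $sh \circ \Delta^{\otimes n}$ and track signs carefully. First, I would apply $\Delta^{\otimes n}$ to $[\underline{0}, \underline{1}]^{\otimes n}$. Since each factor expands as $\Delta([\underline{0}, \underline{1}]) = [\underline{0}] \otimes [\underline{0}, \underline{1}] + [\underline{0}, \underline{1}] \otimes [\underline{1}]$, the result is a sum, indexed by choices $\epsilon \in \{0,1\}^n$, of terms $\bigotimes_{i=1}^n (a_i \otimes b_i)$ where $(a_i, b_i) = ([\underline{0}], [\underline{0}, \underline{1}])$ if $\epsilon_i = 0$ and $(a_i, b_i) = ([\underline{0}, \underline{1}], [\underline{1}])$ if $\epsilon_i = 1$. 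I identify each such $\epsilon$ with a vertex $v$ of $\interval^n$ via $v_1 = \{i : \epsilon_i = 1\}$.

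Next, I would identify the individual tensor factors geometrically. Under the identification $\chains(\interval^n) = \chains(\interval)^{\otimes n}$, the chain $a_1 \otimes \cdots \otimes a_n$ corresponds to the face with $x_i = 0$ for $i \in v_0$ and $x_i$ free for $i \in v_1$, which by Definition~\ref{D: F decomposition} is exactly $v^-$. Similarly, $b_1 \otimes \cdots \otimes b_n$ corresponds to $v^+$. Thus, before applying the shuffle, the summand indexed by $v$ is a tensor of the form $(a_1 \otimes b_1) \otimes \cdots \otimes (a_n \otimes b_n)$ whose factors collate to $v^-$ and $v^+$.

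Applying the shuffle map $sh$ reorders $a_1, b_1, \dots, a_n, b_n$ into $a_1, \dots, a_n, b_1, \dots, b_n$, producing $v^- \otimes v^+$ together with a Koszul sign. Each transposition moves $b_i$ past some $a_j$ with $j > i$, contributing $(-1)^{|a_j||b_i|}$. Since $|a_j| = \epsilon_j$ and $|b_i| = 1 - \epsilon_i$, the total sign is
\begin{equation*}
\prod_{i < j} (-1)^{\epsilon_j(1 - \epsilon_i)} = (-1)^{\#\{(i,j)\, :\, i < j,\ \epsilon_i = 0,\ \epsilon_j = 1\}}.
\end{equation*}

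The key remaining step, and the main obstacle, is to match this Koszul sign with $\sh(v)$. Unpacking the definition, $\beta_v = \emptyset$ since a vertex has no free coordinates, while $\beta_{v^-}$ consists of the $\e_i$ for $i \in v_1$ in natural order and $\beta_{v^+}$ consists of the $\e_i$ for $i \in v_0$ in natural order. Hence $\sh(v)$ is the sign of the permutation of $\{1, \dots, n\}$ listing $v_1$ before $v_0$, each in natural order. Its inversion count is precisely $\#\{(i,j) : i < j,\ i \in v_0,\ j \in v_1\}$, matching the Koszul sign. Combining these identifications yields the claimed formula.
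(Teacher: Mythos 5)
Your proof is correct and follows essentially the same route as the paper's: expand $\Delta^{\otimes n}$, observe that the summands are indexed by $\{0,1\}^n$, identify the two collated halves with $v^-$ and $v^+$, and match the Koszul sign produced by the shuffle map with $\sh(v)$. The paper leaves the final sign identification to the reader (it only asserts ``the identification of the shuffle sign with the sign arising from applying the Leibniz rule''), whereas you carry out the inversion-count comparison explicitly and correctly; this fills in a detail the published proof omits.
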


\begin{proof}
	In expression \eqref{E: Delta} each $x_i^{(1)}$ must be $[\underline 0]$ or $[\underline 0, \underline 1]$ and each $x_i^{(2)}$ must be $[\underline 0, \underline 1]$ or $[\underline 1]$. 
	Moreover, if $x_i^{(1)} = [\underline 0]$ then $x_i^{(2)} = [\underline 0, \underline 1]$, and if $x_i^{(1)} = [\underline 0, \underline 1]$ then $x_i^{(2)} = [\underline 1]$.
	Hence, in each summand of \eqref{E: Delta}, the first and second tensor factors are reciprocal faces of $\interval^n$. Conversely, each vertex of $\interval^n$ determines such a summand.
	The proposition now follows from the identification of the shuffle sign with the sign arising from applying the Leibniz rule.
\end{proof}



\section{Geometric cochains} \label{S: geometric cochains}

To specify a cubical cochain in a fixed degree is to give an integer for each and every cube in that dimension, which in practice can be an unwieldy amount of data. 
Submanifolds, which can be simple to describe in cases of interest, can encode such data through intersection.

The basic idea is classical, essentially an implementation of Poincar\'e duality at the chain and cochain level by using intersection with a submanifold in order to define a function on chains.
We implement this idea in Definition~\ref{D: intersection homomorphism}.
But there are technical obstacles to overcome in order to obtain cochain models. 
First, submanifolds alone do not capture homology and cohomology, as Thom famously realized and as can be seen in applications such as using Schubert varieties to represent cohomology of Grassmannians.
So we generalize from submanifolds to any manifold equipped with a map to our manifold in question.
These evaluate on chains through pull-back, generalizing intersection.
Secondly, we need manifolds with corners to define a product using fiber product, as boundaries are needed to define cohomology and corners arise immediately when taking fiber products of manifolds with boundary. Even though, for example, the collection of smooth maps from simplices constitute a cochain complex additively, taking fiber product quickly leads to more general representing objects.

While there are a number of treatments of homology and cohomology that employ manifolds and their generalizations \cite{Whit47, BRS76, FeSj83, Krec10, Kahn01, Zing08, Joyc15}, we find {geometric cohomology}, first developed by Lipyanskiy in the preprint \cite{Lipy14}, to be the most suitable for connecting differential and combinatorial topology.
In \cite{FMS-foundations}, we have filled in details of this theory as well as equipping it with a (partially defined) multiplicative structure on cochains.
We now give an overview of geometric cohomology referring to \cite{FMS-foundations} for a more complete exposition.

\subsection{Manifolds with corners}
 
We follow a careful development by Joyce \cite{Joy12}.
Let $\R^n_k = [0,\infty)^k \times \R^{n-k} \subset \R^n$, and let $x_i \colon \R^n_k \to \R$ denote projection onto the $i$th coordinate.
Define a map between open subsets of these spaces to be smooth if it can be extended to a smooth map of the ambient Euclidean space in a neighborhood of each point.
We carry over the definitions of smooth charts and atlases as in the standard setting, and we choose to work with subspaces of $\R^\infty$ in order to have a set of such objects.
The following definitions are from \cite[Section 2]{Joy12}.

\begin{definition}
	A {\bf manifold with corners}, or simply a \textbf{c-manifold}, is a subspace of some $\R^N \subset \R^\infty$ that is a topological manifold with boundary together with an atlas of smooth local charts modeled on $\R^n_k$.
	
	The smooth real-valued functions on a manifold with corners $W$ are those $f$ such that for each chart $\phi \colon U \subset \R^n_k \to W$ the composition $f \circ \phi \colon U \to \R$ is smooth.	
	
	A map $f \colon W \to V$ from a manifold with corners to a manifold without boundary 
	is {\bf smooth} if the composition of $f$ with any smooth real-valued function on $V$ is 
	a smooth real-valued function on $W$.
	
	The {\bf tangent bundle} of a manifold with corners is the space of derivations of the ring of smooth real-valued functions.
\end{definition}

By modeling on $\R^n_k$, our category includes manifolds ($k=0$) and manifolds with boundary ($k=1$), as well as cubes and simplices, but not the octahedron, for example, as the cone on $[0,1] \times [0,1]$ is not smoothly modeled by any $\R^n_k$.

Joyce extends the notion of smooth map to maps between manifolds with corners by making fairly stringent requirements on points which map to the boundary of the codomain, so that, in particular, fiber products are well-behaved.
In Joyce's terminology, the extension of our definition of smooth maps into manifolds with corners is called {\bf weakly smooth}.
We will not require this more stringent definition of a smooth map between manifolds with corners, as the only time we will consider fiber products over manifolds with corners will be when those products are zero-dimensional, for which we give an ad hoc treatment.

We will use boundaries of manifolds with corners, which are defined through their natural stratifications.

\begin{definition}
	A point $w$ in a manifold with corners $W$ has \textbf{depth} $k$ if there is a chart from an open subset of $\R^n_k$ that sends the origin to $w$.
	Define the {\bf corner-strata} $S^k(W) \subseteq W$ to be the set of elements having depth~$k$.
\end{definition}

If $W$ is a manifold with boundary, then $S^0(W)$ is its interior and $S^1(W)$ is its boundary. 
But, if $W$ is a general manifold with corners, deeper corner strata need to be incorporated in the boundary.
Because of this, the boundary is naturally a c-manifold over $W$ (that is, a map from a c-manifold to $W$), rather than a subspace of $W$.
Again see \cite[Section 2]{Joy12} for further details.

\begin{definition}
	A \textbf{local boundary component} $\beta$ of $W$ at $x \in W$ is a consistent choice of connected component $b_U$ of $S^1(W) \cap U$ for any neighborhood $U$ of $x$, with consistent meaning that $b_{U \cap U'} \subset b_{U} \cap b_{U'}$.
\end{definition}

Since this notion is local, the number of such components is determined by depth.
Considering the origin in $\R^n_k$, for any $k \geq 0$, points having depth $k$ have exactly $k$ local components. 
For example, $S^1(\interval^3)$ consists of the interiors of two-dimensional faces, and any sufficiently small neighborhood of a corner intersects exactly three of these.

\begin{definition}
	Let $W$ be a manifold with corners. Define its {\bf boundary} $\partial W$ to be the space of pairs $(x, \beta)$ with $x \in W$ and $\beta$ a local boundary component of $W$ at $x$.
	Define $i_{\partial W} \colon \partial W \to W$ by sending $(x,\beta)$ to $x$. 
\end{definition}

The boundary $\partial W$ is itself a manifold with corners, and the boundary map $i_{\partial W}$ is an immersion.
If $W$ is oriented, we orient $\partial W$ by stipulating that an outward normal vector followed by an oriented basis of $\partial W$ yields an oriented basis for $W$.
Taking boundary satisfies the Leibniz rule.

For geometric cohomology, we need co-orientations rather than orientations.
These are treated below and are more involved, requiring care to develop in \cite{FMS-foundations}.

We let $\partial^k W$ denote $\partial (\partial^{k-1} W)$ with $\partial^0 W = W$, and we let $i_{\partial^k W}$, or simply $i_{\partial^k}$, denote the composite of the $i_{\partial^i W}$ maps sending $\partial^k W$ to $W$.

Recall the standard notion of transversality of two maps, defined locally by having the tangent space at an image point spanned by the images 
of tangent spaces of preimages.

\begin{definition}
	Let $f \colon V \to M$ and $g \colon W \to M$ be smooth maps from manifolds with corners to a manifold without boundary.
	We say $f$ and $g$ are \textbf{transverse}, denoted $f \pf g$, if $f|_{S^k(V)}$ and $g|_{S^\ell(W)}$ are transverse for all $k, \ell$. This is equivalent to requiring all pairs $fi_{\partial^kV}$ and $gi_{\partial^\ell W}$ be transverse in the standard sense.
	
	Suppressing maps from the notation, define the \textbf{pull-back} or \textbf{fiber product} $V \times_M W$ as the subspace of $(x, y) \in V \times W$ with $f(x) = g(y)$.
\end{definition}

We will use the term pull-back when we want to emphasize its map to $V$ or $W$, while the fiber product is to be considered over $M$.
The following analysis of fiber products is standard -- see for example Proposition~7.2.7 of \cite{MaDo92}. 

\begin{theorem} \label{pullback}
	Let $f \colon V \to M$ and $g \colon W \to M$ be smooth maps from manifolds with corners to a manifold without boundary.
	If $f \pf g$ then the fiber product $V \times_M W$ is a manifold with corners with 
	\begin{equation*}
	S^i(V \times_M W) = \bigsqcup_{k + \ell = i} S^k(V) \times_M S^\ell(W).
	\end{equation*}
	Moreover, the maps from the fiber product to $V$, $W$, and $M$ are weakly smooth.
\end{theorem}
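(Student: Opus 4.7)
The plan is to reduce the theorem to a local question near a point $(x, y) \in V \times_M W$ and then apply the classical transverse pullback theorem from differential topology, with care taken to account for corner structure. First I would pick $(x, y)$ with $x \in S^k(V)$ and $y \in S^\ell(W)$, choose local charts $\phi \colon U \to V$ and $\psi \colon U' \to W$ modeled on open neighborhoods $U \subset \R^m_k$ and $U' \subset \R^n_\ell$ sending origins to $x$ and $y$, together with an ordinary chart for $M$ around $f(x) = g(y)$. Since smoothness of $f$ and $g$ is defined in terms of extensions to the ambient Euclidean spaces, I would extend $f \circ \phi$ and $g \circ \psi$ to smooth maps $\tilde f \colon \tilde U \to \R^d$ and $\tilde g \colon \tilde U' \to \R^d$ on open sets $\tilde U \subset \R^m$ and $\tilde U' \subset \R^n$, where $d = \dim M$. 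The transversality hypothesis, applied stratum by stratum, gives in these local coordinates transversality of $\tilde f$ and $\tilde g$ at the origin after restricting to the coordinate subspaces $\{x_1 = \cdots = x_k = 0\}$ and $\{x_1 = \cdots = x_\ell = 0\}$, which by openness of transversality yields $\tilde f \pf \tilde g$ throughout a neighborhood. The classical pullback theorem then produces a smooth submanifold $Z \subset \tilde U \times \tilde U'$ of dimension $m + n - d$ near the origin.

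Next I would identify $V \times_M W$ locally with $Z \cap (\R^m_k \times \R^n_\ell)$ and show that this intersection carries the required manifold-with-corners structure modeled on $\R^{N}_{k+\ell}$ with $N = m + n - d$. The key computation is that transversality of $\tilde f$ and $\tilde g$ along each corner stratum ensures that the $k + \ell$ coordinate functions defining the corner loci remain smoothly independent when restricted to $Z$; taking these as the first $k + \ell$ coordinates of a chart on $Z$ cuts out exactly the local model $\R^N_{k+\ell}$. The stratification $S^i(V \times_M W) = \bigsqcup_{k + \ell = i} S^k(V) \times_M S^\ell(W)$ then falls out immediately by counting how many of these coordinates vanish at a given point, since a point of $Z$ mapping to $S^p(V) \times S^q(W)$ is precisely one at which exactly $p + q$ of the distinguished coordinates are zero. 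Weak smoothness of the projections to $V$, $W$, and $M$ is automatic from the construction: in these local coordinates they are restrictions to $Z$ of the ambient smooth projections, so they pull smooth functions on the target back to smooth functions on $Z$.

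The main obstacle I anticipate is the corner-depth bookkeeping, namely verifying cleanly that stratumwise transversality translates into simultaneous independence of all $k + \ell$ boundary-defining functions at a deepest corner point, and that the chart thus produced is compatible with charts coming from nearby points of lower depth. The equivalence noted in the definition between the stratumwise form of transversality and transversality of the iterated boundary maps $f \, i_{\partial^k V}$ and $g \, i_{\partial^\ell W}$ is the tool that packages the hypothesis into a form feeding directly into the local argument. Once the local normal form is firmly in hand, the global conclusion follows by covering $V \times_M W$ with such neighborhoods and invoking uniqueness of the smooth structure on each overlap.
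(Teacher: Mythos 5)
The paper does not actually prove this theorem: it states explicitly that ``[t]he following analysis of fiber products is standard'' and refers the reader to Proposition~7.2.7 of Margalef-Roig and Outerelo Dom\'inguez. So there is no in-paper proof to compare against; what you have written is your own reconstruction of the standard local-chart argument.

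Your reconstruction is essentially correct and is the same route taken in the cited reference and in Joyce's work: reduce to a corner chart $\R^m_k \times \R^n_\ell$ centered at a deepest corner point, extend to open neighborhoods in $\R^m \times \R^n$ using the ``extends to a smooth ambient map'' definition of smoothness, apply the ordinary transverse fiber product theorem to get a smooth submanifold $Z$ of dimension $m+n-d$, and then recognize $V \times_M W$ locally as $Z \cap (\R^m_k \times \R^n_\ell)$, modeled on $\R^{m+n-d}_{k+\ell}$. The step you flag as the main obstacle --- showing that the $k+\ell$ boundary coordinate functions restrict to independent functions on $Z$ --- is the one worth spelling out, and it follows precisely from stratumwise transversality at the deepest stratum: given any target $(a,b) \in \R^k \times \R^\ell$, lift arbitrarily to $(v_0, w_0) \in \R^m \times \R^n$ and then correct by $(v_1, w_1) \in \ker\pi_V \times \ker\pi_W$ solving $D\tilde f_0 v_1 - D\tilde g_0 w_1 = D\tilde g_0 w_0 - D\tilde f_0 v_0$, which is possible exactly because $D\tilde f_0(\ker\pi_V) + D\tilde g_0(\ker\pi_W) = \R^d$ is the transversality of $f|_{S^k(V)}$ and $g|_{S^\ell(W)}$ at the point. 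Once that surjectivity is in hand, the depth count and the stratification formula are immediate, and the projections to $V$, $W$, and $M$ are restrictions of ambient smooth maps, giving weak smoothness. Your sketch therefore represents a correct proof strategy; filling it in carefully recovers the cited result.
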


To generalize this theorem when $M$ is also a manifold with corners requires substantial additional hypotheses in the definition of transverse smooth maps.
Such a generalization is a central result in \cite{Joy12}. We only require this case and the case of manifolds of complementary dimension, discussed below.

\subsection{Geometric cohomology}

Geometric cohomology is a cohomology theory for smooth manifolds defined via proper co-oriented maps from manifolds with corners.
It agrees with singular cohomology, but with different representatives at the cochain level it gives geometric approaches to both theory and calculations.
It is thus akin to de Rham theory in being defined through smooth manifold structure rather than continuous maps.
But, unlike de Rham theory, it is defined over the integers.

Geometric homology and cohomology were defined and developed in a preprint of Lipyanskiy \cite{Lipy14}.
But this preprint does not develop a multiplicative structure at the cochain level.
Moreover, while Lipyanskiy shows geometric homology groups are isomorphic to singular homology, he does not state or prove the corresponding fact for cohomology.
We give a full treatment addressing these points and others in \cite{FMS-foundations}, reviewing here only what we need to compare geometric and cubical cohomology. 

We first define co-orientations.
Recall that one definition of an orientation of a bundle is an equivalence class, up to positive scalar multiplication, of an everywhere non-zero section of the top exterior power of the bundle.

\begin{definition} \label{D: co-orientations}
	Let $E \to M$ be a rank $d$ vector bundle.
	If $d > 0$, define $\Or(E)$ to be $\bigwedge^d E$, and if $d = 0$, define $\Or(E)$ to be the trivial rank one bundle.
	
	A \textbf{co-orientation} of $g \colon W \to M$ is an equivalence class, up to positive scalar multiplication, of an everywhere non-zero section of $\Hom \big( \Or(TW),\, \Or(g^*TM) \big)$.
	We say that $g$ is \textbf{co-orientable} if a co-orientation exists.
\end{definition}

The local triviality of the determinant line bundle of a manifold means being able to choose a consistent basis vector over sufficiently small neighborhoods. 
We call such a choice of basis vectors around a point, which we typically do not specify, a \textbf{local orientation}, and often denote the local orientation for $W$ by $\beta_W$.
We use ordered-pair notation for co-orientation homomorphisms, with $\omega = (\beta_W, \beta_M)$ being the co-orientation that sends the local orientation $\beta_W$ at $x\in W$ to a local orientation $\beta_M$ for $g^*(TM)_x\cong T_{g(x)}M$. 	

We can equivalently define a co-orientation as a choice of isomorphism $\Or(TW)\cong \Or(g^*TM)$, again up to positive scalar multiplication.
Thus, if $f$ is co-orientable and $W$ is connected, any local co-orientation uniquely extends to a global co-orientation.
If a map is co-orientable, it has exactly two co-orientations, which we say are \textbf{opposite}, with, for example, the opposite of $\omega$ above being $(\beta_W, -\beta_M)$, which we also write as $-(\beta_W, \beta_M)$. 

Co-oriented maps compose in an immediate way, forming a category. 
Namely, given $V \xr{f} W \xr{g} M$ and co-orientations $\Or(TV) \to \Or(f^*TW)$ and $\Or(TW)\to \Or(g^*TM)$, we simply compose the latter with the pullback of the former via $g^*$.

Like orientations, co-orientations are ``additional data.''
An exception is the {\bf intrinsic co-orientation of a diffeomorphism}, as the differential
in this special case induces a map on determinant line bundles.
A key case of co-orientation is the following.

\begin{definition} \label{D: normal co-or}
	Let $g \colon W \to M$ be an immersion with an oriented normal bundle $\nu$, with local orientation denoted by $\beta_\nu$.
	Define the \textbf{normal co-orientation} associated to $\beta_\nu$ locally as $\omega_{\nu} = (\beta_W, \beta_W \wedge \beta_\nu)$, where	$\beta_W$ is any choice of a local orientation of $W$.
	
	Conversely, if $g \colon W \to M$ is a co-oriented immersion, define the induced orientation of the normal bundle as the one whose normal co-orientation	agrees with the given one.
\end{definition}

\begin{definition} \label{V: maps are co-oriented}
	A \textbf{c-manifold over a manifold with corners} $N$ is a 
	manifold with corners $W$ with a weakly smooth, proper, co-oriented map $r_W \colon W \to N$, called the \textbf{reference map}.
	Two such are equivalent if there is a diffeomorphism $f \colon W \to W$ so that $r_W \circ f = r'_W$ and the composite of the co-orientation of $r_W$ with the intrinsic co-orientation of $f$ yields the co-orientation of $r'_W$.
	Let $\cman(N)$ denote the set of proper co-oriented c-manifolds over $N$.

	For an element of $\cman(N)$, we write $|W|$ for the \textbf{codimension} $|W|=\dim(N)-\dim(W)$. 
	Let $\cman^*(N)$ be the free abelian group generated by $\cman(N)$, graded by the \textbf{codimension} $|W|$, modulo the following relations:
	\begin{enumerate}
	\item ${V \sqcup W} = V + W$,
	\item ${{W}^{op}} = -W$, where ${W}^{op}$ denotes the co-oriented manifold over $M$ obtained by reversing the co-orientation.
	\end{enumerate}
\end{definition}

We take a free abelian group and then quotient by the first relation instead of defining sum as disjoint union as in \cite{Lipy14} since we define our manifolds with corners as subspaces of a fixed universe, which complicates self addition through union.
By these relations any element of $\cman^*(N)$ is represented by a single map from a likely disconnected manifold with corners, as in particular one can find as many ``copies'' as one needs of any manifold with corners embedded in $\R^\infty$. 

We freely and almost always abuse notation by using the domain $W$ to refer to the manifold over $N$, not $r_W$ or some other symbol, letting context determine whether we are referring to the entire data or the domain.
Our favorite class of c-manifolds over $N$ are submanifolds, for which this abuse is minor.

When $M$ has no boundary, we use $\cman(M)$ to construct a chain complex based on these objects that will compute cohomology.
To do so, we consistently co-orient boundaries, using composition of co-orientations.

\begin{definition} \label{D: boundary co-orientation}
	The {\bf standard co-orientation of a boundary inclusion} $\partial W \hookrightarrow W$ is the normal co-orientation associated to the outward-pointing orientation of $\nu_{\partial W \subset W}$.
	
	If $g \colon W \to M$ is co-oriented, the {\bf induced co-orientation} of $g|_{\partial W}$ is the composition of the standard co-orientation of $\partial W$ into $W$ with the pullback of the co-orientation of $g \colon W \to M$. 
\end{definition}

Our cochains will be equivalence classes of co-oriented c-manifolds over $M$, under an equivalence relation we define using the following concepts, which are taken from or inspired by the definitions of \cite{Lipy14}.

\begin{definition}\label{D: cman types}
	Let $V, W\in \cman(M)$  with reference maps $r_V$ and $r_W$.
	We say
	\begin{itemize}
		\item $V$ and $W$ are \textbf{equivalent} if there is a co-orientation preserving diffeomorphism $\phi \colon W \to V$ such that $r_V \circ \phi = r_W$.
		\item $W$ is \textbf{trivial} if there is a diffeomorphism $\rho \colon W \to W$ such that $r_W \circ \rho = r_W$
		and the composite of the co-orientation of $r_W$ with the co-orientation given by $D\rho$ is the opposite of the co-orientation of $r_W$.
		\item $W$ has \textbf{small rank} if the differential $D r_W$ is less than full rank at all points of $W$.
		\item $W$ is \textbf{degenerate} if it has small rank and ${\partial W}$ is the union of a trivial co-oriented 
		c-manifold over $M$ and one with small rank.
	\end{itemize}
\end{definition}

Rather than small rank, Lipyanskiy uses a condition called small image.
In our notation, $r_W$ has small image if there is an $r_T \colon T \to M$ with $T$ of smaller dimension than $W$ such that $r_W(W) \subseteq r_T(T)$.
The small rank condition is thus weaker, and we find it more manageable for purposes of defining a product while still providing a theory that is isomorphic to singular cohomology \cite{FMS-foundations}. 
 
A key example is the interval mapping to a point, which has small rank, but its boundary does not have small rank. 
It is nonetheless degenerate because its boundary is trivial.

\begin{definition} \label{D: geometric cohomology}
	Let $M$ be a manifold without boundary. Let $Q^*(M)$ denote the subgroup 
	of $\cman^*(M)$ generated by those equivalence classes that are either trivial or degenerate.
	We define the \textbf{geometric cochains} of $M$, denoted $C_\Gamma^*(M)$, as the quotient $\cman^*(M)/ Q^*(M)$.
	We denote the equivalence class of $W$ modulo $Q^*(M)$ by $\uW$. 
\end{definition}

The definitions are arranged so that geometric cochains form a chain complex.

\begin{proposition}
	If $V \in Q^*(M)$ then $\partial V \in Q^*(M)$.
	Moreover, for any $W \in \cman^*(M)$, $\partial^2 W \in Q^*(M)$.
\end{proposition}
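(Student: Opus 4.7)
The plan is to reduce both assertions to two elementary facts about the interaction of $\partial$ with triviality: if $V$ is trivial then so is $\partial V$, and $\partial^2W$ is trivial for every $W$. The first follows by restriction. The trivializing diffeomorphism $\rho\colon V\to V$ preserves the depth stratification, hence restricts to a self-diffeomorphism $\rho\vert_{\partial V}$ of $\partial V$ that commutes with $r_{\partial V}$; and since the boundary co-orientation is by definition the composite of the outward-normal co-orientation with the pullback of the co-orientation of $r_V$, reversal of the latter by $\rho$ propagates to reversal of the former by $\rho\vert_{\partial V}$. The second fact is established through the natural swap involution $\sigma\colon\partial^2W\to\partial^2W$ exchanging the ordered pair of local boundary components at each depth-$2$ point. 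This $\sigma$ is a smooth involution, commutes with the reference map, and reverses the induced co-orientation, because the double-boundary co-orientation is built by composing the co-orientation of $r_W$ with two outward-normal factors whose interchange introduces a Koszul sign of $-1$. This already proves the second sentence of the proposition, since trivial classes lie in $Q^*(M)$ by definition.

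For the first sentence, since $\partial$ is $\Z$-linear and $Q^*(M)$ is generated as a subgroup by trivial and degenerate equivalence classes, it suffices to verify $\partial V\in Q^*(M)$ when $V$ is trivial (which is the restriction argument) or degenerate. In the degenerate case, write $\partial V=T\sqcup S$ with $T$ trivial and $S$ of small rank; then $T\in Q^*(M)$, so the task reduces to showing $S\in Q^*(M)$, which I will do by verifying that $S$ itself is degenerate. Small rank of $S$ is given, so the remaining point is that $\partial S$ decomposes as the union of a trivial and a small-rank piece. Using $\partial^2V=\partial T\sqcup\partial S$ together with the labeling of each depth-$2$ point of $V$ according to whether its first and second local boundary components lie in $T$ or in $S$, the space $\partial^2V$ partitions as $\mathrm{TT}\sqcup\mathrm{TS}\sqcup\mathrm{ST}\sqcup\mathrm{SS}$, with $\partial T=\mathrm{TT}\sqcup\mathrm{TS}$ and $\partial S=\mathrm{ST}\sqcup\mathrm{SS}$. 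The swap $\sigma$ restricts to a trivializing involution of $\mathrm{SS}$, and it identifies $\mathrm{ST}$ with $\mathrm{TS}\subset\partial T$ via a co-orientation-reversing diffeomorphism.

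The main obstacle is the final step: verifying that $\mathrm{ST}$ itself has small rank, so that $\partial S=\mathrm{SS}\sqcup\mathrm{ST}$ really is a union of a trivial and a small-rank piece and $S$ is genuinely degenerate. The small-rank hypothesis on $V$ guarantees only a single kernel direction of $Dr_V$ at each point, and a priori this direction need not be tangent to the codimension-$2$ corner strata that comprise $\mathrm{ST}$. I plan to resolve this by working in the local product coordinates supplied by the $F$-decomposition of Definition~\ref{D: F decomposition} applied at vertices, where the reference map $r_V$ factors compatibly through the splitting $\interval^n\cong F^-\times F\times F^+$ so that the small-rank condition on $V$ can be transferred in controlled fashion to the corner stratum $\mathrm{ST}$. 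Together with the Koszul-sign bookkeeping already used in the two preliminary facts, this normal-form analysis closes the argument.
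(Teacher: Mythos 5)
Your two preliminary reductions are sound and match the paper's sketch: restricting a trivializing diffeomorphism $\rho$ of $V$ to $\partial V$ shows the boundary of a trivial class is trivial, and the co-orientation-reversing $C_2$-action on $\partial^2 W$ shows $\partial^2 W$ is trivial. The reduction of the first sentence to the trivial and degenerate cases is also fine. However, there is a genuine gap in the degenerate case, and the proposed fix cannot close it.

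The assertion that $\mathrm{ST}$ has small rank is simply false. Take $V = \interval^2$ mapping to $M = \R^3$ by $r_V(x,y) = (x,0,0)$, co-oriented in either way. Then $r_V$ has rank $1 < 2$, so $V$ has small rank. Its boundary splits as $T \sqcup S$, where $T$ consists of the top and bottom edges (each mapping by $(x,\cdot)\mapsto(x,0,0)$ with full rank; $T$ is trivial via $(x,y)\mapsto(x,1-y)$) and $S$ consists of the left and right edges (each collapsing to a point, so small rank). Thus $V$ is degenerate. But here $\mathrm{SS} = \emptyset$, so $\partial S = \mathrm{ST}$ consists of the four corners of $\interval^2$, each a $0$-manifold mapping to $\R^3$ with rank $0 = \dim$, i.e.\ \emph{full} rank. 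What actually makes $S$ degenerate in this example is that $\mathrm{ST}$ is \emph{trivial} (the two corners on each vertical edge carry opposite outward normals, hence opposite induced co-orientations), not that it has small rank. So the step you flagged as the obstacle fails in general, and the correct argument has to establish triviality (or membership in $Q^*(M)$) of $\mathrm{ST}$ by another mechanism, presumably propagating the triviality of $T$ through the swap $\sigma$.

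The proposed repair does not help: the $F$-decomposition of Definition~\ref{D: F decomposition} is a splitting $\interval^n \cong F^- \times F \times F^+$ of the \emph{standard cube}, introduced to analyze the logistic flow and the Serre diagonal. A general $V \in \cman^*(M)$ carries no cubical structure, and its reference map $r_V \colon V \to M$ is in no way required to ``factor compatibly through'' such a product splitting, so there are no ``local product coordinates'' of this type available on $V$ or its corner strata. The analysis you envision therefore has nothing to apply to.
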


Details can be found in \cite{Lipy14, FMS-foundations}.
Briefly, $\partial^2 W$ always has a $C_2$-action, permuting the local boundary components attached to points in $S^2(W)$.
Moreover, under our co-orientation conventions, the two vectors appended to the co-orientation of $W$ to obtain one for $\partial^2 W$ over the same point in $S^2(W)$ differ by a transposition, so this $C_2$-action is co-orientation reversing.
This fact about $\partial^2$ not only eventually shows that $d_\Gamma^{\,2} = 0$ but is first needed to show that the boundary of a degenerate map is degenerate. 

\begin{definition}
	Define a differential $d_\Gamma \colon C_\Gamma^*(M) \to C_\Gamma^{*+1}(M)$ by sending $\uW$ to $ \underline{\partial W}$, making $C_\Gamma^*(M)$ into a chain complex called the {\bf geometric cochain complex}.
	We denote its homology by $H^*_\Gamma(M)$, the \textbf{geometric cohomology} of $M$. 
\end{definition}

We focus on the case in which $M$ is a manifold without boundary primarily because 
the theory with boundary requires relative constructions.
For example, the identity $\R\to\R$ generates $H_\Gamma^0(\R)\cong \Z$, but the identity $[0,1] \to [0,1]$ is not a cocycle unless we quotient out by mappings to the boundary.
A definition for manifolds with boundary, or more generally corners, would also require boundary restrictions for transversality of weakly smooth maps, as developed for example in \cite{Joy12}.
We leave such generalizations to further work.

Lipyanskiy also develops a theory of geometric chains, as opposed to cochains, using compact domains and orientations.
In Section 6 of \cite{Lipy14}, he shows that the homology theory based on geometric chains satisfies some of the Eilenberg-Steenrod axioms, which is is enough to deduce in Section 10 that geometric homology is isomorphic to singular homology.
Lipyanskiy does not treat geometric cohomology in the same detail, and in particular he does not claim that it is isomorphic to singular cohomology.
We prove this is true in \cite{FMS-foundations}, but the proof requires the development of additional tools -- either cubulations (or triangulations) and the 
intersection homomorphism as in Definition~\ref{D: intersection homomorphism} below, or by using work of Kreck and Singhof \cite{Krec10, Krec10b}.

\begin{theorem} \label{T: geometric singular isomorphism}
	On the category of smooth manifolds (without boundary) and continuous maps, geometric cohomology is isomorphic to singular cohomology with integer coefficients.
	That is, the functors $H^*_\Gamma$ and $H^*(\cdot \,;\Z)$ are naturally isomorphic.
\end{theorem}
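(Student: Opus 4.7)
The plan is to construct an \textbf{intersection homomorphism} $\cI \colon C_\Gamma^*(M) \to \cochains(X)$ from geometric cochains to cubical cochains of a smooth cubulation $X$ of $M$, and to show $\cI$ is a quasi-isomorphism. Since $\cochains(X)$ computes singular cohomology, every smooth manifold admits a smooth cubulation (via a smooth triangulation), and both sides are homotopy functors, naturality under arbitrary continuous maps will follow from a smooth approximation argument applied to the smooth case.

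For $\uW \in C_\Gamma^*(M)$ whose reference map is transverse to each open cube of $X$, I would define $\cI(\uW)(\sigma)$ on a cube $\sigma$ of dimension $|W|$ as the signed count of points in the compact $0$-dimensional fiber product $W \times_M \sigma$, with signs obtained by comparing the co-orientation of $r_W$ with the canonical orientation of $\sigma$. First I would verify that $\cI$ vanishes on trivial $W$ (the defining involution identifies intersection points with opposite signs) and on small-rank $W$ (generic intersections with top-dimensional cubes are empty), so that $\cI$ descends to the quotient by $Q^*(M)$. Next I would show that $\cI$ is a chain map by using Theorem~\ref{pullback} to identify the signed boundary count of the $1$-manifold $W \times_M \sigma$ with the sum of intersection numbers of $\partial W$ with codimension-one faces of $\sigma$, where the signs are controlled by Definition~\ref{D: boundary co-orientation}. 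Since $\cI$ is only defined on the transverse subcomplex, I would then show that this subcomplex is quasi-isomorphic to $C_\Gamma^*(M)$ using stratified transversality for weakly smooth maps from manifolds with corners to smooth manifolds: any reference map can be perturbed through a compactly supported homotopy to a transverse one, and the homotopy itself serves as a geometric cochain whose boundary realizes the desired equivalence modulo $Q^*$.

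The main obstacle is proving that $\cI$ induces an isomorphism on cohomology. I would handle this by verifying the Eilenberg--Steenrod axioms for $H_\Gamma^*$ on the category of smooth manifolds without boundary and smooth maps, and then invoking the standard uniqueness theorem for cohomology theories. The dimension axiom on a point is immediate from the definitions, and homotopy invariance follows because a smooth homotopy $W \times \interval \to M$ is itself a co-oriented c-manifold, providing a cobordism of geometric cochains between the pullbacks along the ends. Mayer--Vietoris is the delicate step: decomposing a c-manifold $W$ over $M = U \cup V$ requires cutting $r_W^{-1}(U)$ and $r_W^{-1}(V)$ along level sets of a smooth partition of unity, which only produces an honest decomposition up to small-rank correction terms. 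The key observation, which makes the argument close, is that these correction terms lie in $Q^*(M)$ and are therefore killed in $C_\Gamma^*$; this is precisely the role of the degenerate quotient built into Definition~\ref{D: geometric cohomology}. Once $\cI$ is exhibited as a natural transformation of cohomology theories that is an isomorphism on a point, it is an isomorphism throughout, and a final smoothing argument extends the result from smooth to continuous maps.
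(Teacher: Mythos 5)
Your overall strategy---verify the Eilenberg--Steenrod axioms for $H^*_\Gamma$ and invoke a uniqueness theorem for cohomology theories---is in fact one of the two proof routes the paper explicitly points to. The paper defers the proof to \cite{FMS-foundations} and says it can be done either by showing directly that the intersection homomorphism $\cI$ is a quasi-isomorphism (Theorems~\ref{T: transverse complex} and~\ref{T: stokes}) or ``by using work of Kreck and Singhof''; your proposal is essentially the second route, minus the citation. So the idea is sound, but there are two concrete gaps in the sketch.

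First, $\cI$ cannot serve as the comparison natural transformation the way you describe. It depends on a choice of smooth cubulation $X$ of $M$, and a continuous (or even smooth) map $M \to N$ does not carry a cubulation of $M$ to one of $N$. So $\cI$ is not a natural transformation from $H^*_\Gamma$ to $H^*(\cdot\,;\Z)$ as functors of $M$, and you cannot conclude from the uniqueness theorem that $\cI$ is an isomorphism. What the uniqueness theorem \emph{does} give you, once the axioms are verified, is the existence of some abstract natural isomorphism $H^*_\Gamma \cong H^*(\cdot\,;\Z)$---which is all the statement of the theorem asks for, so this is not fatal; but then the construction of $\cI$ plays no role in the argument, and the additional claim that $\cI$ itself is a quasi-isomorphism (the content of Theorem~\ref{T: stokes}) still needs a separate, direct proof. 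Relatedly, the ``standard uniqueness theorem'' is stated for CW pairs; extending it to the category of smooth manifolds is not automatic and is exactly what the Kreck--Singhof references accomplish.

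Second, the Mayer--Vietoris sketch is the \emph{homology} argument, not the cohomology one. Cutting $W$ along level sets of a partition of unity decomposes a \emph{compact} cycle over $M = U \cup V$ into pieces supported in $U$ and $V$; that is Lipyanskiy's argument for geometric homology. In geometric cohomology the reference maps are proper rather than compact, the restriction maps run the other way, and the delicate exactness is of a different kind: given cocycles on $U$ and $V$ that agree on $U \cap V$, one must glue them to a cocycle on $M$, and a proper c-manifold over $U \cap V$ has no canonical extension to a proper c-manifold over $U$. So the level-set cutting does not directly apply, and this step needs a genuinely different argument (this is a point where the degenerate quotient $Q^*$ and the transversality machinery of \cite{FMS-foundations} do real work). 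Your homotopy-axiom sketch also has the roles of $W$ and $M$ slightly scrambled---the relevant homotopy is of maps $M \to N$, and one pulls $W$ back along it, rather than homotoping the reference map $W \to M$---but that part is easily repaired.
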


The functoriality here at the cohomology level is fully defined with respect to all continuous maps.
Given an element of cohomology represented by $r_W \colon W \to M$, choose a smooth map in the homotopy class of $f$ that is transverse to $r_W$ and then pull back. 
It is shown in \cite{FMS-foundations} that this process gives a well-defined induced map $f^*$.

There is no full functoriality at the cochain level, since an $f$ cannot be transverse to all c-manifolds over $M$.
But there is a quasi-isomorphic subcomplex of $C_\Gamma^*(M)$ consisting of cochains that are transverse to $f$ which can be pulled back.
This is analogous to having only a partially-defined fiber product, as we introduce in Section~\ref{S: fiber product section}.
Such ``partial functoriality'' of cochains will be needed for applications of the main results of this paper at the cochain level.

\subsection{Cubical structures and intersections}

We now bring together the two structures we have been developing, geometric cochains and cubulations.
We construct a quasi-isomorphism from (a sub-complex of) geometric cochains to cubical cochains, and in subsequent sections we will exhibit a vector field flow that binds these structures multiplicatively.
In the de Rham setting, integration provides a relationship between differential forms and cochains.
For geometric cochains the intersection homomorphism plays a similar role.

A smooth cubulation is one for which characteristic maps are smooth maps of manifolds with corners.
Smooth cubulations exist for any smooth manifold, as in the following construction of \cite{ShSh92}.
Start with a smooth triangulation (see for example \cite[Theorem 10.6]{MUNK66} for the existence of such). 
Consider the cell complex that is dual to its barycentric subdivision. Intersecting those dual cells with each simplex in the triangulation provides a subdivision of the simplex into cells that are linearly isomorphic to cubes.
Moreover, starting with an ordered triangulation -- obtained for example by taking a barycentric subdivision -- such a cubical decomposition embeds cellularly into the cubical lattice of $\R^\infty$, and thus it is the geometric realization of a cubical complex.

Since cubes are oriented manifolds with corners, the cubical chain complex maps injectively to Lipyanskiy's geometric chain complex.
But in contrast with the evaluation of singular cochains on singular chains, which is purely algebraic, the natural evaluation of geometric cochains on geometric chains is defined through intersection or, more generally, pull-back.

\begin{definition}
	Let $M$ be a manifold without boundary equipped with a smooth cubulation $|X| \to M$.
	We say that $W \in \cman(M)$ is \textbf{transverse} to $X$ if its reference map is transverse to each characteristic map of the cubulation.

	We denote by $\cman_{\pf X}^*(M)$ the subcomplex of $\cman(M)$ generated by maps transverse to $X$ and by $C^*_{\Gamma \pf X}(M)$ the corresponding quotient by its intersection with $Q^*(M)$.
\end{definition}

We will not reference $X$ when it is clear from the context. 
The subsets $\cman_\pf^*(M)$ and $C^*_{\Gamma \pf}(M)$ are well-defined chain complexes since transversality of the maps representing geometric cochains by definition includes transversality of their restrictions to all strata, in particular their boundaries.

A key technical result, whose proof is given in \cite{FMS-foundations}, 
is that these transversality conditions do not change cohomology.

\begin{theorem} \label{T: transverse complex}
	For any cubulated manifold $M$, the inclusion $C^*_{\Gamma \pf}(M) \to C^*_\Gamma(M)$ is a quasi-isomorphism.
\end{theorem}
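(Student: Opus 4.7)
The plan is to prove the result via a standard transversality-and-cylinder argument, showing (a) every cohomology class in $H^*_\Gamma(M)$ admits a representative transverse to the cubulation $X$, and (b) whenever a transverse cocycle is a coboundary in $C_\Gamma^*(M)$, it is already a coboundary in $C^*_{\Gamma \pf}(M)$.

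The key technical ingredient is what I will call the \emph{transversality homotopy lemma}: for any $W \in \cman(M)$ there exists an arbitrarily $C^0$-small smooth homotopy $H \colon W \times [0,1] \to M$ with $H_0 = r_W$ such that (i) $H_1$ is transverse to every characteristic map of $X$, (ii) $H$ itself, viewed as the reference map of the c-manifold $W \times [0,1]$ over $M$, is transverse to $X$, and (iii) $H$ is stationary in time on a neighborhood of any prescribed closed subset of $W$ on which $r_W$ was already transverse. This follows by applying Thom's parametrized transversality theorem inductively on the corner strata $S^k(W)$ from the deepest outward, at each step invoking clause (iii) to extend the transverse perturbation already constructed on deeper strata without disturbing it there.

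For surjectivity on cohomology, given a cocycle $\underline{W}$ with $\partial W \in Q^*(M)$, apply the lemma to produce $H$ and regard $W \times [0,1]$ as an element of $\cman(M)$ via $H$, co-oriented compatibly with $W$. Its boundary as a c-manifold over $M$ is $W' - W \pm (\partial W \times [0,1])$, where $W'$ denotes $W$ endowed with the transverse reference map $H_1$. The essential point is that the cylindrical piece lies in $Q^*(M)$. Unpacking Definition~\ref{D: cman types}, one verifies that trivial components of $\partial W$ remain trivial after cylinder extension once $H$ is chosen to be equivariant under the trivializing self-diffeomorphism on $\partial W \times [0,1]$, and small-rank components remain small-rank because the rank of $H$ on a product factor is at most one greater than the rank on the $W$-factor, which stays strictly below the new dimension. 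The boundary of a degenerate component then reduces to these two cases after one more round of the same analysis, so $\underline{W} = \underline{W'}$ in $H_\Gamma^*(M)$ with $W' \in \cman^*_{\pf X}(M)$.

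Injectivity runs the same argument relatively: if $\underline{W} = d_\Gamma \underline{V}$ in $C_\Gamma^*(M)$ with $W$ transverse, then $W$ is the non-$Q^*$ part of $\partial V$, and applying the lemma to $V$ while holding a neighborhood of $W$ fixed via clause~(iii) produces a transverse $V'$ with $\partial V' - W \in Q^*(M)$. The main obstacle, and where I expect most of the work to concentrate, is the verification that $Q^*(M)$ is preserved under the cylinder construction with the homotopy produced by the lemma. The definitions of \emph{trivial}, \emph{small rank}, and \emph{degenerate} intertwine the domain, the reference map, and the co-orientation, and one must arrange the perturbation $H$ to be simultaneously compatible with the stratum-by-stratum transversality induction \emph{and} with the trivializing involutions present on the trivial components of $\partial W$ (and iteratively on $\partial^k W$). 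Coordinating these constraints consistently across all depths of corners, so that the extension to the next stratum respects both the transversality already achieved and the relevant equivariance conditions, is the real technical heart of the argument.
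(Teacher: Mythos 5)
The paper itself does not prove this statement: it defers to the companion paper \cite{FMS-foundations}, stating only that the proof is given there and (in the discussion following Corollary~\ref{C: finite diagram}) that it uses the transversality-homotopy techniques of \cite[Section 2.3]{GuPo74}. So a line-by-line comparison is impossible from this source alone, but your overall strategy --- a parametrized Thom transversality theorem applied stratum-by-stratum to produce a cylinder, combined with a relative version for injectivity --- is precisely the kind of Guillemin--Pollack argument the paper indicates, and you have correctly identified that the hard part is not producing the homotopy but verifying that the cylinder pieces land in $Q^*(M)$.

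There is, however, a gap in how you handle that verification that goes beyond what you flagged. When you speak of ``trivial components of $\partial W$'' and ``small-rank components,'' you are implicitly assuming that $\partial W \in Q^*(M)$ gives a geometric decomposition of the c-manifold $\partial W$ into trivial and degenerate pieces. It does not. The condition $\partial W \in Q^*(M)$ is a condition in the quotient group $\cman^*(M)$ (free abelian on $\cman(M)$ modulo the relations $V\sqcup W = V + W$ and $W^{op} = -W$): it says $[\partial W]$ equals a $\Z$-linear combination of classes of trivial and degenerate c-manifolds, possibly after adding and cancelling auxiliary terms via those relations. The generators $T_i$, $D_j$ in that combination need not sit inside $\partial W$ as submanifolds, so there is nothing a priori on which to impose the $\rho$-equivariance you invoke, nor is it clear that the cylinder $\partial W \times [0,1]$ with reference map $H$ inherits a corresponding decomposition. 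To make the argument go through you would need either (i) a normal form lemma showing every element of $Q^*(M)$ is represented geometrically by a disjoint union of trivial and degenerate pieces (modulo genuinely cancelling pairs), or (ii) a direct argument that the assignment $A \mapsto A\times[0,1]$ (with reference map the chosen homotopy) descends to a well-defined map $Q^*(M)\to Q^*(M)$, which amounts to verifying compatibility with the group relations as well as with triviality, small rank, and degeneracy. This is where I would expect the actual proof in \cite{FMS-foundations} to spend its effort, and it is not resolved merely by choosing $H$ equivariant on each trivializing self-diffeomorphism --- those diffeomorphisms live on generators of $Q^*(M)$, not on $\partial W$ itself. The same issue recurs in your injectivity step, where you write that ``$W$ is the non-$Q^*$ part of $\partial V$'': again, $\underline{W} = d_\Gamma \underline{V}$ is the group condition $W - \partial V \in Q^*(M)$, not a statement that $W$ is literally a union of boundary components of $V$, so holding ``a neighborhood of $W$'' fixed inside $V$ is not a well-posed instruction without further work.
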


We next obtain cubical cochains from elements in $C^*_{\Gamma \pf}(M)$ essentially by counting intersections.
We will require reference to various components of the intersection homomorphism, so we set them aside in a series of closely related definitions.

\begin{definition}
	A \textbf{signed set} is a finite set $S$ with a \textbf{sign function} $\mathrm{sgn} \colon S \to \{\pm 1\} \subseteq \Z$.
	The \textbf{signed cardinality} of such a set is $\sum_{p \in S} \mathrm{sgn}(p)$, which we denote by $\alpha(S)$.
\end{definition}

The signed sets we count are discrete intersections -- or more generally pull-backs -- of manifolds with corners.

\begin{definition}
	We say that c-manifolds over a c-manifold $N$ are \textbf{complementary} if 

\begin{itemize}
\item their codimensions -- or equivalently dimensions --
	add to the dimension of $N$,

\item  over any $S^i(N)$ with $i>0$ their images are disjoint, and

\item over $S^0(N)$ they are transverse in the usual sense.
\end{itemize}
\end{definition}

The disjointedness condition over strata is also a transversality condition, which can be viewed as a special case of a full notion of transversality over a manifold with corners as in \cite{Joy12}.
By focusing on complementary manifolds, the stringent boundary conditions for general transversality reduce to an expected disjointedness over all but the interior.
 
If $W$ is a c-manifold over $M$ that is transverse to a cubulation and $E$ is a cube of complementary dimension, then the intersection of the image of $W$ with $E$ is discrete.
Furthermore, the pull-back $W \times_M E$ is finite since $r_W$ is proper.
About any point $p \in r_W^{-1}(E) \subset W$, the reference map $r_W$ is locally an embedding, and thus locally has a normal bundle.
As noted above, the co-orientation of $W$ determines an orientation of the normal bundle, 
which at the intersection point $r_W(p)$ can be identified with its tangent space in $E$.
Thus this orientation of the normal bundle can be compared with the standard ordering of basis vectors of $E$ at $r_W(p)$, when identified with a standard cube via its characteristic map.
(This local orientation of $E$ is not immediately related to an orientation of $M$.)

\begin{definition} \label{D: intersection number}
	Let $W, E \in \cman({N})$ be complementary, and let $W$ be co-oriented while $E$ is oriented. Define $\mathrm{Int}_N(W, E)$ to be the signed set given 
	by the pull-back, with sign function given by comparing the normal co-orientation of $W$ with the orientation of $E$.
	Define the intersection number $I_N(W,E)$ to be $\alpha(\rm{Int}_N(W, E))$.
\end{definition}

\begin{definition} \label{D: intersection homomorphism}
	Let $M$ be a manifold without boundary with a cubulation $|X| \to M$.
	The \textbf{intersection homomorphism}
	\begin{equation*}
	\cI \colon \cman^*_{\Gamma \pf X}(M) \to \cochains(X)
	\end{equation*}
	is the grading preserving linear map defined by sending $W$ to the cochain whose value on $E \in X$ is $\cI(W)(E) = I_M(W, E)$.
\end{definition}

The intersection of a cube with an element of $\cman(M)$ that is trivial, as in Definition \ref{D: cman types}, will give a canceling count, and there can be no intersections with small rank reference maps.
Therefore, the intersection homomorphism vanishes on $Q^*(M)$, and there is an induced a map on geometric cochains.
We show in \cite{FMS-foundations} that this is a chain map. 
The proof  is akin to the proof that degree of a smooth map is homotopy invariant, through the classification of compact one-manifolds.
Indeed, on some $(n+1)$-cube $E$ both $\delta \cI(\uW)$ and $\cI(\underline{\partial W})$ are counts of $0$-manifolds over $E$, which together are boundaries of the pull-back of $W$ and $E$, a $1$-manifold.

We refer to this induced map as the \textbf{intersection chain map} and denote it, abusively, also by $\cI \colon C_\Gamma^*(M)\to C^*(X)$.

\begin{theorem} \label{T: stokes}
	The map $C^*_{\Gamma \pf X}(M) \to \cochains(X)$ induced by the intersection homomorphism is a surjective quasi-isomorphism.
\end{theorem}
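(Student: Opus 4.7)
The plan is twofold: prove surjectivity by exhibiting an explicit cochain-level right inverse $\mathfrak{s}$ to $\cI$ built from small transverse disks, and then derive the quasi-isomorphism statement by combining the already-cited Theorem~\ref{T: transverse complex} with Theorem~\ref{T: geometric singular isomorphism}, identifying the induced map on cohomology with the classical cellular-to-singular isomorphism for the cubulation.

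For surjectivity, for each cube $E$ of $X$ of dimension $k$ I would pick a small closed co-oriented disk $D_E \in \cman(M)$ of codimension $k$, transverse to $X$, meeting $E$ in a single interior point (with intersection number $+1$) and disjoint from all other $k$-cubes. Such a $D_E$ exists because two distinct $k$-cubes of a cubical complex only meet along lower-dimensional faces, so a sufficiently small transverse disk through the barycenter of $E$ avoids every other $k$-cube, and transversality with cubes of other dimensions is a generic condition. Define $\mathfrak{s}\colon \cochains(X) \to C^*_{\Gamma \pf X}(M)$ by $\mathfrak{s}(\alpha) = \sum_E \alpha(E)\, \underline{D_E}$. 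By construction $\cI \circ \mathfrak{s} = \id$, so $\cI$ is surjective at the cochain level.

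For the quasi-isomorphism, consider the zig-zag
\begin{equation*}
H^*_{\Gamma \pf X}(M) \xrightarrow{\cong} H^*_\Gamma(M) \xrightarrow{\cong} H^*(M;\Z) \xleftarrow{\cong} H^*(X),
\end{equation*}
where the first isomorphism is Theorem~\ref{T: transverse complex}, the second is Theorem~\ref{T: geometric singular isomorphism}, and the third is the standard cubical-to-singular cohomology isomorphism for $|X| \cong M$. It suffices to verify that $H^*(\cI)$ coincides with the composition of these isomorphisms read right-to-left. This reduces to showing that for a cocycle $\underline{W} \in C^k_{\Gamma \pf X}(M)$, the cubical cochain $\cI(W)$ represents the same class in $H^k(M;\Z)$ as $\underline{W}$ under the geometric-singular identification. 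This is essentially the classical Poincar\'e-Lefschetz duality statement that the cellular cochain dual to a transverse manifold is computed by signed intersection counts with top-dimensional open cells in a dual decomposition, which in our setting is exactly $E \mapsto I_M(W, E)$.

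The main obstacle is this compatibility check: carefully tracing through the zig-zag of quasi-isomorphisms to confirm that evaluation via transverse intersection really does recover the correct singular cohomology class, all while respecting co-orientation sign conventions. An alternative that avoids the zig-zag is to upgrade $\mathfrak{s}$ to a chain map, replacing the small disks with honest dual cells constructed via barycentric subdivision of $|X|$ so that $\partial D_E$ is a signed sum of $D_{E'}$ over the codimension-one cofaces $E'$ of $E$, and then to construct an explicit chain homotopy $\mathfrak{s} \circ \cI \simeq \id$ by a cube-by-cube geometric cobordism deforming a transverse $W$ into the weighted union of dual cells prescribed by $\cI(W)$. Either route demands careful boundary analysis across corners, which is exactly what the manifold-with-corners framework of Section~\ref{S: geometric cochains} is designed to accommodate.
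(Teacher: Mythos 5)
Your surjectivity argument is the same as the paper's: the paper simply observes that ``we can find for any cube a small submanifold transversally passing through it at one point,'' which is exactly your $D_E$ construction, made slightly more explicit. One small wrinkle worth noting: when $M$ (hence $X$) is noncompact, $\cochain{k}(X)=\Hom_\Z(\chain{k}(X),\Z)$ contains cochains of infinite support, so the formal sum $\mathfrak{s}(\alpha)=\sum_E\alpha(E)\,\underline{D_E}$ does not live in the \emph{free} abelian group $\cman^*(M)$; one should instead represent it by a single proper, possibly noncompact c-manifold built as a locally finite disjoint union. This is harmless in the closed case.

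For the quasi-isomorphism half, the paper itself does not give an argument in this text: it defers entirely to \cite{FMS-foundations}. Your zig-zag
\begin{equation*}
H^*_{\Gamma \pf X}(M) \xrightarrow{\;\cong\;} H^*_\Gamma(M) \xrightarrow{\;\cong\;} H^*(M;\Z) \xleftarrow{\;\cong\;} H^*(X)
\end{equation*}
is a sensible framework, but there is a genuine circularity risk you should flag. Immediately before Theorem~\ref{T: geometric singular isomorphism} the paper states that its proof in \cite{FMS-foundations} goes \emph{either} through cubulations and the intersection homomorphism of Definition~\ref{D: intersection homomorphism}, \emph{or} through Kreck--Singhof. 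If the former route is the one used, then $H^*_\Gamma(M)\cong H^*(M;\Z)$ is itself established by proving that $\cI$ is a quasi-isomorphism, and your zig-zag would reduce the present theorem to itself. You would need to insist on the Kreck--Singhof route (or some other independent construction of the isomorphism) to avoid circularity. Even granting that, you correctly identify the real work as the ``compatibility check'' that $H^*(\cI)$ agrees with this composite, and neither your zig-zag sketch nor your alternative chain-homotopy sketch carries it out; the duality statement you invoke (intersection with a transverse cycle computes the dual cellular cochain, with correct co-orientation signs) is precisely what needs proof in the manifolds-with-corners framework, and is not a citation-free classical fact. Your alternative plan --- upgrading $\mathfrak{s}$ to a chain map via a genuine dual cell complex and producing a geometric cobordism realizing $\mathfrak{s}\circ\cI\simeq\id$ --- is attractive and independent of Theorem~\ref{T: geometric singular isomorphism}, but the paper explicitly cautions (just after Corollary~\ref{C: finite diagram}) that dual complexes built from smooth manifolds with corners are not known to exist in general, so this route also requires substantial work to make rigorous. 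In short: surjectivity matches the paper; for the quasi-isomorphism your outline is reasonable but both proposed routes have unaddressed obstructions, one of them a circularity that must be ruled out.
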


Surjectivity is clear since we can find for any cube a small submanifold transversally passing through it at one point. 
The quasi-isomorphism result is proven in \cite{FMS-foundations}.

\subsection{Fiber product} \label{S: fiber product section}

We now endow geometric cochains with a (graded) commutative product given by intersection of immersed submanifolds, or fiber product more generally.
This product is partially defined, as it must be if it is to be commutative and induce the cup product in cohomology.
The construction ends up being delicate since our cochains are themselves equivalence classes.
Indeed, Lipyanskiy only discusses multiplicative structure at the level of cohomology in Section~5 of \cite{Lipy14}.
Joyce's M-cohomology \cite{Joyc15}, which has more complicated representing objects, is also endowed with cochain-level product structure,
 after considerable effort.

We start at the level of c-manifolds over $M$, a manifold with no boundary.
Even here, substantial care in \cite{FMS-foundations} is taken to define a co-orientation
on the fiber product of co-oriented maps. We summarize the results as follows, recalling that $|V|$ stands for the codimension $\dim(M)-\dim(V)$. 

\begin{theorem} \label{T: pull-back co-or}
	Let $V$ and $W$ be transverse c-manifolds over $M$, a manifold without boundary,
	with co-orientations $\omega_V$ and $\omega_W$.
	There is a unique co-orientation $\omega_P$ of $P = V \times_M W$, which depends on $\omega_V$ and $\omega_W$, with the following properties:
	\begin{enumerate}
	\item Reversing either $\omega_V$ or $\omega_W$ results in reversing $\omega_P$.
	\item The co-orientations of $V \times_M W$ and $W \times_M V$, when compared by composition with the restriction of the diffeomorphism which sends 
	$V \times W$ to $W \times V$, differ by the sign $(-1)^{|V||W|}$.
	\item $\partial ( V \times_M W) = (\partial V) \times_M W \sqcup (-1)^{|V|} V \times_M (\partial W)$.
	\item Let $r_V$ and $r_W$ be immersions with oriented normal bundles, so $V \times_M W$ itself is an immersion whose
	normal bundle $\nu$ is isomorphic to the direct sum of the normal bundles of $V$ and $W$. 
	Orient $\nu$ by the orientation of the normal bundle of $V$ followed by that of $W$.
	Then the co-orientation $\omega_P$ agrees with $\omega_{\nu}$.
	\end{enumerate}
\end{theorem}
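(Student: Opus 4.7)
The plan is to define $\omega_P$ pointwise using the transversality exact sequence, verify each of the four properties via sign-tracking, and obtain uniqueness by reducing the general case to the immersion case treated in property~(4).

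At any point $p = (v,w) \in P$ with $m = r_V(v) = r_W(w)$, transversality gives a short exact sequence
\begin{equation*}
0 \to T_p P \xrightarrow{(D\pi_V, D\pi_W)} T_v V \oplus T_w W \xrightarrow{Dr_V - Dr_W} T_m M \to 0.
\end{equation*}
Fixing a convention for determinants of short exact sequences produces a natural isomorphism $\Or(T_pP) \otimes \Or(T_mM) \cong \Or(T_vV) \otimes \Or(T_wW)$. Applying $\omega_V \otimes \omega_W$ on the right and canceling one copy of $\Or(T_mM)$ yields a homomorphism $\omega_P \colon \Or(T_pP) \to \Or(T_mM)$ that varies smoothly with $p$. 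I would take this as the definition of the fiber-product co-orientation and spend the rest of the argument checking that it has the advertised behavior.

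Property~(1) follows immediately from the bilinearity of the construction. Property~(4) is a direct computation: in the immersion case the exact sequence splits as the normal bundle of $P$ being $\nu_V \oplus \nu_W$ over the intersection, and one checks that the convention chosen in the definition reproduces exactly $\omega_\nu$ with the stipulated ordering. Property~(2) is where the first nontrivial sign appears: swapping the roles of $V$ and $W$ corresponds to swapping summands in $T_vV \oplus T_wW$, which multiplies the determinant by $(-1)^{\dim V \cdot \dim W}$, and simultaneously replaces $Dr_V - Dr_W$ by its negative; a short calculation converts the total sign into $(-1)^{|V||W|}$ after using that $\dim V + \dim W = 2\dim M - |V| - |W|$. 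Property~(3) requires identifying $\partial P$ via Theorem~\ref{pullback} and then comparing the outward normal of $\partial P$ with the outward normals of $\partial V$ and $\partial W$ inserted into the exact sequence; the factor $(-1)^{|V|}$ in the Leibniz rule arises precisely as the cost of commuting the outward normal of $\partial W$ past the $|V|$ codimension directions coming from $V$.

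For uniqueness, I would argue that property~(4) pins $\omega_P$ down completely in the immersion case, and every co-oriented map locally factors as a composition of an embedding (via its graph into $W \times M$) with a submersion to $M$; together with properties~(1)--(3) (to handle the graph factorization and boundary compatibility) this forces agreement of any candidate co-orientation with the one constructed above on an open dense set, hence everywhere by continuity. The main obstacle throughout is bookkeeping of signs: the Koszul convention in the determinant of an exact sequence, the sign of a direct sum swap, and the sign for inserting an outward normal vector must all be fixed once and globally so that the three signs in properties (2), (3), and (4) come out exactly as stated. I expect that getting this bookkeeping correct, rather than any conceptual difficulty, is where the bulk of the work lies.
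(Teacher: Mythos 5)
The paper does not actually prove Theorem~\ref{T: pull-back co-or}: the surrounding text says explicitly that ``substantial care in \cite{FMS-foundations} is taken to define a co-orientation on the fiber product'' and that the theorem merely ``summarizes the results,'' so there is no in-paper proof for me to compare yours against. I can therefore only assess your sketch on its own terms.

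The exact-sequence approach you choose --- defining $\omega_P$ from the short exact sequence
\begin{equation*}
0 \to T_p P \to T_v V \oplus T_w W \xrightarrow{\; Dr_V - Dr_W \;} T_m M \to 0
\end{equation*}
and the canonical determinant isomorphism --- is the natural one and should work, but your sign account for property~(2) has a real gap. Swapping the summands contributes $(-1)^{\dim V \cdot \dim W}$, and replacing $Dr_V - Dr_W$ by its negative contributes $(-1)^{\dim M}$ (since lifts of a basis of $T_m M$ change sign), giving $(-1)^{\dim V \cdot \dim W + \dim M}$ in total under the straightforward convention. This equals the desired $(-1)^{|V||W|} = (-1)^{(\dim M - \dim V)(\dim M - \dim W)}$ only when $\dim M(\dim V + \dim W)$ is even, which fails for instance when $\dim M = 3$, $\dim V = 1$, $\dim W = 2$. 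So ``a short calculation'' does not convert your two signs into $(-1)^{|V||W|}$; rather, you must first choose a non-obvious determinant convention (for example, placing the $\Or(TM)$ factor in a position that contributes a compensating $(-1)^{\dim M(\dim V + \dim W)}$, or equivalently building the construction in terms of codimensions from the start, as property~(4) suggests) and only then check that (2), (3), and (4) come out simultaneously. Your own last paragraph identifies this as the crux, but it understates the problem: the issue is not merely that the conventions are fiddly, it is that the most naive convention gives the wrong answer.

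The uniqueness argument is also thinner than it appears. For a fixed pair $(V,W)$ with non-immersive reference maps, properties (1)--(3) are invariant under replacing $\omega_P$ by $-\omega_P$, and (4) is vacuous, so nothing pins down the sign for that single pair. Uniqueness must instead mean that there is a unique \emph{natural} assignment over all transverse pairs satisfying (1)--(4), with the immersion case as the anchor; your graph-factorization idea gestures at the right naturality but you need to say precisely which compatibility (e.g., with composing with diffeomorphisms and with restriction to open sets) propagates the sign from the immersion case to the general case, and why a different choice of factorization cannot change the answer.
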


We call the co-orientation $\omega_P$ the {\bf product co-orientation}.

\begin{definition}
	If $V, W \in \cman^*(M)$ are transverse, define $V \bullet_M W$ to be $V \times_M W$ with the product co-orientation.
\end{definition}

By Theorem~\ref{T: pull-back co-or}, $ \cman^*(M)$ is thus a partially-defined graded commutative ring.
We next address the passage to cochains, giving a partially defined differential graded commutative algebra.

\begin{definition} \label{D: cochain trans}
	We say that two geometric cochains $\uV, \uW \in C^*_{\Gamma}(M)$ are \textbf{transverse} if they possess representative elements in $\cman^*(M)$ of the form 
	$V = V_T + V_Q$ and $W = W_T + W_Q$
	such that:
	\begin{enumerate}
	\item ${V_T}$ and ${W_T}$ are transverse, and
	\item ${V_Q}$ and ${W_Q}$ are elements of $Q^\ast(M)$.
	\end{enumerate}
	
	With such decompositions fixed, we define the fiber product $\uV \bullet_M \uW \in C^*_\Gamma(M)$ 
	to be the geometric cochain represented by $V_T \bullet_M W_T$.
\end{definition}

A delicate argument in \cite{FMS-foundations} gives the following.

\begin{theorem} \label{P: product}
	The fiber product $\bullet_M$ descends to a well-defined, though only partially-defined, product on $C_\Gamma^*(M)$, which in turn passes to a fully-defined product on $H_\Gamma^*(M)$. Under the isomorphism of Theorem~\ref{T: geometric singular isomorphism}, the induced product on geometric cohomology agrees with cup product on singular cohomology.
\end{theorem}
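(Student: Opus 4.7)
The plan splits Proposition~\ref{P: product} into three sub-claims: (a) the fiber product in Definition~\ref{D: cochain trans} is independent of the chosen decompositions of $\uV$ and $\uW$, making $\bullet_M$ a well-defined partial operation on $C_\Gamma^*(M)$; (b) it descends to a fully-defined product on $H_\Gamma^*(M)$; and (c) this cohomological product matches singular cup product under Theorem~\ref{T: geometric singular isomorphism}. The guiding principle for (a) is that $Q^*(M)$ behaves like a two-sided ``ideal'' under the partial product, while (b) rests on a general-position argument combined with the Leibniz rule of Theorem~\ref{T: pull-back co-or}(3).

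For (a), it suffices to show that whenever $V \in Q^*(M)$ is transverse to $W \in \cman^*(M)$, the product $V \bullet_M W \in Q^*(M)$; the symmetric statement then follows from the commutativity sign of Theorem~\ref{T: pull-back co-or}(2). If $V$ is trivial with co-orientation-reversing self-diffeomorphism $\rho$, then $(v,w) \mapsto (\rho(v), w)$ is a fiber-preserving self-diffeomorphism of $V \times_M W$ that reverses the product co-orientation by Theorem~\ref{T: pull-back co-or}(1). If $r_V$ has small rank, then so does the reference map of $V \bullet_M W$, since its differential factors through $Dr_V$ on the first factor. Combining these with the Leibniz boundary formula handles the degenerate case: a decomposition $\partial V = T \sqcup S$ into trivial and small-rank parts gives
\begin{equation*}
\partial(V \bullet_M W) = (T \bullet_M W) \sqcup (S \bullet_M W) \sqcup \pm (V \bullet_M \partial W),
\end{equation*}
a trivial piece together with two small-rank pieces. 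Granting this ``ideal'' behavior, the independence of the product on the choice of transverse decomposition follows by interpolating between $V_T \bullet_M W_T$ and $V_T' \bullet_M W_T'$ through a common transverse refinement obtained via small smooth perturbation, noting that each successive difference lies in $Q^*(M)$.

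For (b), any two cohomology classes admit pairwise transverse cocycle representatives by combining Theorem~\ref{T: transverse complex} with smooth perturbation of reference maps. Independence of the induced cohomology product on such representatives then follows from the Leibniz rule: if $\uV - \uV' = d_\Gamma \underline{U}$ with $U, V, V'$ all transverse to $W$ and $W$ a cocycle, then
\begin{equation*}
\uV \bullet_M \uW - \uV' \bullet_M \uW = (d_\Gamma \underline{U}) \bullet_M \uW = d_\Gamma(\underline{U} \bullet_M \uW) \mp \underline{U} \bullet_M d_\Gamma \uW = d_\Gamma(\underline{U} \bullet_M \uW).
\end{equation*}
For (c), push the product forward through the intersection chain map $\cI$ of Definition~\ref{D: intersection homomorphism}, a quasi-isomorphism onto cubical cochains by Theorem~\ref{T: stokes}. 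Choosing representatives mutually transverse and transverse to a cubulation, classical Poincar\'e-duality-style arguments identify $\cI(\uV \bullet_M \uW)$ and $\cI(\uV) \sms \cI(\uW)$ at the cohomology level, since both compute signed intersections of codimension-complementary cycles against top-dimensional cubes. The principal obstacle will be the degenerate case of (a): propagating small-rank and triviality conditions through both Leibniz boundary terms while reconciling the product co-orientation with the trivializing self-diffeomorphism demands careful bookkeeping of the co-orientation data provided by Theorem~\ref{T: pull-back co-or}, and is the technical heart of the full proof in \cite{FMS-foundations}.
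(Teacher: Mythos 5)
The first thing to note is that the paper does not prove this statement at all: it is imported from the companion work with the sentence ``A delicate argument in \cite{FMS-foundations} gives the following,'' so the only available comparison is with the intended argument there. Your outline has the right skeleton for the first two claims --- showing that trivial, small-rank, and degenerate representatives are absorbed under transverse fiber product so that $Q^*(M)$ behaves like an ideal, then combining the Leibniz formula of Theorem~\ref{T: pull-back co-or}(3) with a transversality quasi-isomorphism to get a fully defined product on $H^*_\Gamma(M)$. But the step you dispatch in one sentence is exactly the delicate one, and your proposed fix fails. Independence of the product in Definition~\ref{D: cochain trans} from the chosen decompositions cannot be obtained by ``interpolating through a common transverse refinement obtained via small smooth perturbation, noting that each successive difference lies in $Q^*(M)$'': a perturbed representative is a different element of $\cman^*(M)$, and its difference from the original is in general neither trivial nor degenerate --- the two are merely cohomologous (via a cylinder), not equal modulo $Q^*(M)$. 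Without perturbing, you cannot expand $V_T \bullet_M W_T - V'_T \bullet_M W'_T$ bilinearly, since $V'_T$ (or $V_T - V'_T$) need not be transverse to $W_T$. This is precisely the difficulty the paper flags (``the construction ends up being delicate since our cochains are themselves equivalence classes''), and your ideal-property observations alone do not resolve it.

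Claim (c) is likewise asserted rather than proved. At the cochain level $\cI(\uV \bullet_M \uW)$ and $\cI(\uV) \sms \cI(\uW)$ genuinely differ --- that discrepancy is the entire motivation of the present paper --- so ``both compute signed intersections of complementary cycles against top-dimensional cubes'' cannot serve as the identification; one needs either the flow comparison of Theorem~\ref{T: main theorem} (which the paper notes yields the cohomology-level agreement as a corollary) or the separate elementary cohomology-level argument given in \cite{FMS-foundations}. Two smaller points: in (b) you must also justify that the bounding cochain $\underline{U}$ can be chosen transverse to $\uW$, which requires a relative analogue of Theorem~\ref{T: transverse complex} for transversality with respect to a fixed c-manifold rather than a cubulation; and in the small-rank step, ``the differential factors through $Dr_V$'' only bounds the rank by $\dim V$, whereas small rank of the product requires rank strictly less than $\dim(V \times_M W)$ --- one should instead use that $\ker Dr_V \oplus \ker Dr_W$ lies in the kernel of the product's reference differential, giving rank at most $\mathrm{rk}(Dr_V) + \mathrm{rk}(Dr_W) - \dim M < \dim(V \times_M W)$.
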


The agreement of fiber product with cup product at the cohomology level will also follow from our main cochain-level result, Theorem~\ref{T: main theorem}. 
But there are more direct elementary arguments, given in \cite{FMS-foundations}, that prove agreement at the cohomology level without yielding any insight at the cochain level.



\section{Logistic vector field} \label{S: logistic}

We construct a vector field associated to a cubulation on a manifold that, in a sense, gives a smooth extension of the cubical poset structure.
This vector field is a cousin of the standard vector field on a triangulated manifold whose zeros coincide with barycenters of the triangulation.
They can both, for example, be used to prove the Poincar\'e-Hopf theorem, equating the signed count of zeros of a generic vector field and the Euler characteristic of a manifold.

For our applications, as in the Poincar\'e-Hopf theorem, the most significant aspect of the considered vector field is its zero locus.
In the cubical context, one can naturally take products of vector fields, and in particular define a family of vector fields on cubes starting with any vector field on the interval.  
Such families are compatible across face structures if the vector field is zero on the boundary of the interval.
We require a vector field on the interval whose only zeros are at the boundary, in which case the vector fields on cubes will only have zeros at their corners.  
We choose to start from arguably the simplest such vector field, which is amenable to explicit analysis and whose dynamics have been extensively studied.

\begin{definition}
	The \textbf{logistic vector field} $\f^n$ over $\R^n$ is defined by
	\begin{align*} 
	\f^n(x) & = \sum_{i=1 }^n x_{i} (1 - x_{i}) \e_{i}
	\end{align*}
	where $\e_i = \frac{\partial\ }{\partial x_i}$.
	We denote the time $t$ flow of $x$ along it by $\f_t^n(x)$.
\end{definition}

\subsection{Naturality}

We will exclusively consider the restriction of $\f^n$ to the unit cube $\interval^n$, where we have the following key compatibility, which allows us to extend this vector field to any cubulated manifold.

\begin{lemma} \label{L: f is natural}
	The vector fields $\f^n$ are natural with respect to face inclusions. That is, for integers $k$ and $n$ with $1 \leq k \leq n$,
	\begin{equation*}
	\f^n|_{\delta_{k}^\varepsilon(\interval^{n-1})}=(\delta_{k}^\varepsilon)_*(\f^{n-1}).
	\end{equation*}
\end{lemma}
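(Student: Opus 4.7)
The plan is to verify the identity by direct evaluation, exploiting the fact that the face inclusion fixes a coordinate at a value where the logistic factor $x(1-x)$ vanishes. This is essentially the whole content of the lemma, so I would keep the proof short and purely computational.

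First, I would unpack the pushforward. The map $\delta_k^\varepsilon \colon \interval^{n-1} \to \interval^n$ is the affine inclusion
\begin{equation*}
\delta_k^\varepsilon(x_1,\dots,x_{n-1}) = (x_1,\dots,x_{k-1},\varepsilon,x_k,\dots,x_{n-1}),
\end{equation*}
whose differential sends $\e_i \in T\interval^{n-1}$ to $\e_i \in T\interval^n$ for $i < k$ and to $\e_{i+1} \in T\interval^n$ for $i \geq k$. Therefore
\begin{equation*}
(\delta_k^\varepsilon)_*(\f^{n-1})(y) = \sum_{i<k} y_i(1-y_i)\,\e_i + \sum_{i \geq k} y_i(1-y_i)\,\e_{i+1},
\end{equation*}
where $y = (y_1,\dots,y_{n-1})$ and the vector is regarded as a tangent vector to $\interval^n$ at $\delta_k^\varepsilon(y)$.

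Next, I would compute the restriction of $\f^n$ to the image of $\delta_k^\varepsilon$. Writing $x = \delta_k^\varepsilon(y)$, so $x_k = \varepsilon$ and $x_j = y_j$ for $j<k$, $x_j = y_{j-1}$ for $j>k$, the key observation is that $x_k(1-x_k) = \varepsilon(1-\varepsilon) = 0$ because $\varepsilon \in \{0,1\}$. Hence the $\e_k$ summand drops out of $\f^n(x)$, and reindexing $j > k$ via $i = j-1$ yields exactly the expression displayed above for $(\delta_k^\varepsilon)_*(\f^{n-1})(y)$.

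There is no real obstacle: everything reduces to the single arithmetic fact $\varepsilon(1-\varepsilon)=0$ for $\varepsilon\in\{0,1\}$ together with bookkeeping of the index shift induced by the face map. The only thing to be mildly careful about is consistency of the two index conventions (before and after position $k$); I would either display both sums explicitly as above, or phrase the computation in the coordinate-free form ``the $i$-th component of $\f^n$ depends only on the $i$-th coordinate, so restricting to a face where one coordinate is frozen at a zero of $x(1-x)$ tangentially decouples that coordinate.''
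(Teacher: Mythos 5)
Your proof is correct and follows essentially the same route as the paper's: a direct coordinate computation of the pushforward, with the single substantive observation that the logistic factor $\varepsilon(1-\varepsilon)$ vanishes for $\varepsilon\in\{0,1\}$, so the $\e_k$ term drops out and the remaining indices line up after a shift. The paper organizes the calculation as a one-sided chain of equalities starting from $(\delta_k^\varepsilon)_*(\f^{n-1})$, while you evaluate both sides and match them, but the content is identical.
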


\begin{proof}
	We compute 
	\begin{align*}
	(\delta_{k}^\varepsilon)_*(\f^{n-1}_x) & = (\delta_{k}^\varepsilon)_* \left(\sum_{i=1}^{n-1 } x_{i} (1 - x_{i}) \e_{i} \right) \\ & = 
	\sum_{i=1 }^{k-1} x_{i} (1 - x_{i}) \e_{i}\ +\
	\sum_{i=k}^{n-1} x_{i+1} (1 - x_{i+1}) \e_{i+1} \\ & =
	\sum_{i=1}^{k-1} x_{i} (1 - x_{i}) \e_{i}\ +\
	0 \cdot \e_k\, + \
	\sum_{i=k+1}^n x_{i} (1 - x_{i}) \e_{i} \\ & =
	\f^n|_{\delta_k^\varepsilon(x)}
	\end{align*}
	as claimed.
\end{proof}

Given these face compatibilities, we will typically leave off the dimension index and simply write $\f$, allowing context to determine whether we consider $\f^n$ on $\interval^n$ or $\f^k$, $k<n$, on one of the $k$-faces of $\interval^n$.

\begin{definition}
Let $ |X| \to M$ be a smooth cubulation of a manifold $M$ with $\iota_c : \interval^d \to M$  the characteristic map of a cube $c$.
 Define the logistic vector field on $M$ associated to this cubulation by, for each $c$, applying the derivative
of $\iota_c$ to the logistic vector field on $\interval^d$.  
\end{definition}

By the fact that characteristic maps are each homeomorphisms onto their  images, Lemma ~\ref{L: f is natural} implies the logistic vector field on $M$ associated to $|X| \to M$ is well defined.  

We can now formulate precisely a sense in which the logistic vector field gives an extension of the vertex ordering of the cubical set $X$ that cubulates $M$.
In general, the flow of a vector field at a point in a manifold  defines a flow-line map $(a,b) \subset \R \to M$ or in some cases $[-\infty, \infty] \to M$.  
The poset structure on $(a,b)$ or 
$[-\infty, \infty]$ can then be imposed on the flow-line, well defined as it is independent of point at which the flow-line is centered.
In some cases these flow-line posets extend to a poset structure on all of $M$.  
The logistic flow is one such case, and the resulting poset structure when restricted to vertices agrees with  the poset structure, 
 which is part of the cubical structure as in 
Definition~\ref{D:cubical}.  Indeed, as will be immediate from our next discussion,
flow-lines all extend to $[-\infty, \infty]$, starting at some vertex $v$ and ending at a vertex $w$ with $v \leq w$ in the cubical ordering. 

\subsection{Logistic flow}

We next explicitly describe the logistic flow diffeomorphism.

\begin{lemma} \label{L: bernoulli equation}
	The {\bf logistic flow} $\f_t(x) = (x_1(t), \dots, x_n(t))$ exists for any $t \in \R$ and any $x \in \interval^n$,
	and is explicitly given by the logistic function
	\begin{equation*}
	x_i(t) = \frac{x_i(0) \; e^t}{x_i(0)(e^t-1)+1.}
	\end{equation*}
	In particular, $x_i(t) = \varepsilon$ if $x_i(0) = \varepsilon$ for $\varepsilon \in \{0, 1\}$.
\end{lemma}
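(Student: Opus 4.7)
The plan is to exploit the decoupled structure of $\f^n$: since $\f^n(x) = \sum_i x_i(1-x_i)\e_i$ has no mixed terms, the initial value problem $\dot{x}_i = x_i(1-x_i)$ splits into $n$ independent copies of the scalar logistic ODE. It therefore suffices to solve the one-dimensional problem $\dot{x} = x(1-x)$ with $x(0) \in [0,1]$ and to verify that the resulting formula, applied coordinate-wise, yields a flow on $\interval^n$ for all $t \in \R$.

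For the scalar ODE, I would invoke the Bernoulli substitution $y = 1/x$, which is licit when $x(0) \in (0,1)$. Since $\dot{y} = -\dot{x}/x^2 = -(1/x) + 1 = -y + 1$, the transformed equation $\dot y + y = 1$ is linear with general solution $y(t) = 1 + Ce^{-t}$; the initial condition $y(0) = 1/x(0)$ gives $C = (1 - x(0))/x(0)$, and inverting produces
\begin{equation*}
x(t) \;=\; \frac{x(0)\,e^t}{x(0)(e^t - 1) + 1}.
\end{equation*}
A direct substitution then confirms that this formula also satisfies the ODE at the degenerate initial values $x(0) = 0$ and $x(0) = 1$, where it collapses to the constant solutions $x(t) \equiv 0$ and $x(t) \equiv 1$ respectively. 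This handles in particular the last claim of the lemma.

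It remains to check that the formula defines a flow on $\interval^n$ for every $t \in \R$, i.e., that the denominator $D(t) = x(0)(e^t - 1) + 1$ never vanishes and that $x(t) \in [0,1]$. For $t \geq 0$ both summands of $D(t)$ are nonnegative and the constant term is $1$, so $D(t) \geq 1$; for $t < 0$ the bound $x(0) \leq 1$ gives $x(0)(e^t - 1) \geq e^t - 1$ and hence $D(t) \geq e^t > 0$. The inequality $x(t) \leq 1$ rearranges to $1 - x(0) \geq 0$, and $x(t) \geq 0$ is immediate from the non-negativity of numerator and denominator. Since the right-hand side is smooth in $t$ and solves the ODE with the correct initial condition, uniqueness of solutions to smooth ODEs identifies it with $\f_t(x)$.

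The only subtlety, rather than a genuine obstacle, is ensuring the Bernoulli change of variables is justified at the boundary values $x(0) \in \{0,1\}$; this is sidestepped by verifying the closed-form expression against the ODE directly rather than by deriving it. Everything else is routine one-variable calculus once the decoupling observation is made.
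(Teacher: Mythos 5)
Your proposal is correct and takes essentially the same approach as the paper: solve the scalar logistic ODE explicitly, note the decoupling across coordinates, and appeal to existence and uniqueness. The paper's proof is a single terse paragraph (citing separation of variables rather than the Bernoulli substitution, a cosmetic difference), whereas you carefully verify that the denominator never vanishes and that $[0,1]$ is invariant — details the paper leaves implicit but which are genuinely part of the claim that the flow exists for all $t \in \R$ on $\interval^n$.
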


The proof is to check that the $x_i(t)$ given satisfy $\frac{d x_i}{dt} = x_i(1- x_i)$ -- indeed, these $x_i$ are standard and can be found using separation of variables -- and then appeal to existence and uniqueness of single variable ordinary differential equations.

The inverse function associated to this flow is also elementary. In a single variable, if $y = \f_t(x)$ then solving the flow as expressed in Lemma \ref{L: bernoulli equation} in terms of $x$ gives us 
\begin{equation} \label{E: inverse flow}
x = \frac{y}{e^t - y(e^t-1)}.
\end{equation}

Since 
\begin{equation*}
\lim_{t \to + \infty} \frac{e^t}{e^t + c} = 1
\quad \text{ and } \quad
\lim_{t \to - \infty} \frac{e^t}{e^t + c} = 0,
\end{equation*}
for $c\neq 0$, we have the following. 

\begin{corollary} \label{C: limit of points along the flow}
	For every $x \in \interval^n$, the limits $x^\pm = \lim_{t \to \pm \infty} \f_t(x)$ 	exist, and we have
	\begin{equation*}
	x^+_i = \begin{cases}
	0, & \text{ if } x_i(0) = 0, \\ 1, & \text{ otherwise,}
	\end{cases}
	\quad \text{ and } \quad
	x^-_i = \begin{cases}
	1, & \text{ if } x_i(0) = 1, \\ 0, & \text{ otherwise.}
	\end{cases}
	\end{equation*}
\end{corollary}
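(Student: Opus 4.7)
The plan is to prove the corollary by a direct coordinate-wise computation using the explicit formula for the flow provided by Lemma~\ref{L: bernoulli equation}, namely
\begin{equation*}
x_i(t) = \frac{x_i(0)\, e^t}{x_i(0)(e^t - 1) + 1}.
\end{equation*}
Since the flow acts independently in each coordinate, it suffices to analyze a single coordinate and split into cases based on the value of $x_i(0)$.

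First I would handle $t \to +\infty$. If $x_i(0) = 0$, then $x_i(t) = 0$ identically, so the limit is $0$. If $x_i(0) \neq 0$, I would divide numerator and denominator by $e^t$ to rewrite
\begin{equation*}
x_i(t) = \frac{x_i(0)}{x_i(0) + (1 - x_i(0))\, e^{-t}},
\end{equation*}
and observe that the second term in the denominator vanishes as $t \to +\infty$, yielding the limit $1$. This matches the formula claimed for $x_i^+$.

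Next I would treat $t \to -\infty$. If $x_i(0) = 1$, direct substitution into the formula gives $x_i(t) = \frac{e^t}{e^t} = 1$ identically, so the limit is $1$. If $x_i(0) \neq 1$ (in particular $0 \leq x_i(0) < 1$), then as $t \to -\infty$ the numerator $x_i(0)\, e^t$ tends to $0$, while the denominator tends to $x_i(0)(0-1) + 1 = 1 - x_i(0)$, which is strictly positive; hence the limit is $0$. This gives the formula for $x_i^-$.

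There is no real obstacle here: the statement reduces to applying the asymptotic identities displayed just before the corollary in each non-degenerate case, together with an inspection of the two fixed points $x_i(0) \in \{0, 1\}$ of the single-variable logistic equation. Care is only needed to confirm that the denominator does not vanish in the regimes considered, which is immediate from $0 \leq x_i(0) \leq 1$ and $e^t > 0$.
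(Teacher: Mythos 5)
Your proof is correct and takes essentially the same approach as the paper: a direct coordinate-wise computation from the explicit formula of Lemma~\ref{L: bernoulli equation}, separating the fixed points $x_i(0)\in\{0,1\}$ from the generic case. The paper simply packages the generic case by rewriting $x_i(t)$ in the form $e^t/(e^t+c)$ with $c\neq 0$ and invoking the displayed limits, which is the same elementary observation as your division by $e^t$.
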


By taking derivatives of the formulas in Lemma \ref{L: bernoulli equation} with respect to the $x_i(0)$, treated as coordinates, we immediately identify the flow diffeomorphism on tangent spaces.

\begin{corollary} \label{C: pushforward of vectors along the flow}
	The Jacobian matrix representing the differential of the diffeomorphism $\f_t:\interval^n \to \interval^n$ for a fixed time $t$	is diagonal with entries
	\begin{equation*}
	\left(D_x \f_t\right)_{i,i} = \frac{e^t}{(x_i(e^t -1) + 1)^2.}
	\end{equation*}
\end{corollary}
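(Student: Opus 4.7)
The corollary is a direct consequence of the explicit flow formula in Lemma~\ref{L: bernoulli equation}, so the plan is a short computational one with two observations.

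First I would note that the formula
\begin{equation*}
x_i(t) = \frac{x_i(0)\, e^t}{x_i(0)(e^t-1)+1}
\end{equation*}
expresses the $i$th output coordinate of $\f_t$ as a function of the single input coordinate $x_i(0)$, with no dependence on $x_j(0)$ for $j\neq i$. Consequently, the partial derivative $\partial x_i(t)/\partial x_j(0)$ vanishes whenever $j \neq i$, which immediately gives that the Jacobian matrix of $\f_t$ at $x$ is diagonal. This is the structural step: it uses only the coordinatewise nature of the logistic vector field recorded implicitly in the definition of $\f^n$ and explicitly in Lemma~\ref{L: bernoulli equation}.

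Next I would compute the diagonal entry by differentiating the scalar function $u \mapsto u e^t/(u(e^t-1)+1)$ with respect to $u$ (where $u$ plays the role of $x_i(0) = x_i$). Applying the quotient rule gives
\begin{equation*}
\frac{d}{du}\!\left(\frac{u e^t}{u(e^t-1)+1}\right) = \frac{e^t\bigl(u(e^t-1)+1\bigr) - u e^t(e^t-1)}{\bigl(u(e^t-1)+1\bigr)^2},
\end{equation*}
and after cancellation of the $u e^t(e^t-1)$ terms in the numerator only the $e^t$ summand survives, yielding the stated value $e^t/(x_i(e^t-1)+1)^2$.

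There is no real obstacle here; the only thing to be careful about is bookkeeping between $x_i$ as a coordinate on the domain (the initial condition) and $x_i(t)$ as a coordinate on the image, together with the fact that one must have $x_i(e^t-1)+1 > 0$ on $\interval^n$ so that the expressions make sense. Since $x_i\in[0,1]$ and $e^t>0$ one checks $x_i(e^t-1)+1\geq \min(1,e^t)>0$, so the denominator never vanishes and $\f_t$ is genuinely a diffeomorphism of $\interval^n$ with the asserted diagonal differential. This completes the proof plan.
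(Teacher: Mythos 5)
Your proof is correct and takes essentially the same approach as the paper, which simply notes that the Jacobian is obtained by differentiating the coordinatewise formulas of Lemma~\ref{L: bernoulli equation} with respect to $x_i(0)$. You have merely spelled out the quotient-rule computation and the positivity of the denominator, both of which the paper leaves implicit.
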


\subsection{Neighborhoods} \label{S: neighborhoods}

Given a face $F$ of $\interval^n$, we will focus on two families of subsets of $F$ that are parameterized by real numbers $u \in (0,1)$, the \textbf{lower} and \textbf{upper subsets} of $F$
\begin{align*}
L_u(F) & = \{x \in F\ |\ \forall j \in F_{01},\ x_j \leq u\}, \\
U_u(F) & = \{x \in F\ |\ \forall j \in F_{01},\ x_j \geq u\}.
\end{align*}
We also need sets that are neighborhoods of these in the bound directions.
Let $N_\epsilon L_u(F)$ and $N_\epsilon U_u(F)$ consist of those points whose free variables are constrained as above and whose bound variables are within $\epsilon$ of those of $F$.
For example, the first set can be described explicitly as
\begin{align*}
N_\epsilon L_u (F) & = \left\{(x_1,\ldots, x_n) \in \interval^n\ \middle| 
\begin{array}{ll}
\forall j \in F_0, & x_j < \epsilon, \\
\forall j \in F_{01}, & x_j \leq u, \\
\forall j \in F_1, & x_j>1-\epsilon,
\end{array}\right\},
\end{align*}
and the second can be described analogously.
We will refer to $N_\epsilon L_u(F)$ and $N_\epsilon U_u(F)$ respectively as a \textbf{lower} and \textbf{upper neighborhood} of $F$ despite $F$ not being a subset of either.
Please compare with Figure~\ref{F: lower subspace and nbhd}.

Consistent with this, the $L^\infty$ neighborhood $N_\epsilon(F)$ consists of those points whose bound variables are within $\epsilon$ of those of $F$.
In particular, if $F$ is a terminal face then $N_\epsilon(F)=\{ (x_1,\ldots, x_n)\mid x_j>1-\epsilon,\ j\in F_1\}$, and if $F$ is initial then $N_\epsilon(F)=\{ (x_1,\ldots, x_n)\mid x_j<\epsilon,\ j\in F_0\}$.

\begin{figure}[!h]
	\centering
	\begin{subfigure}{.32\textwidth}
		\includegraphics[scale=.7]{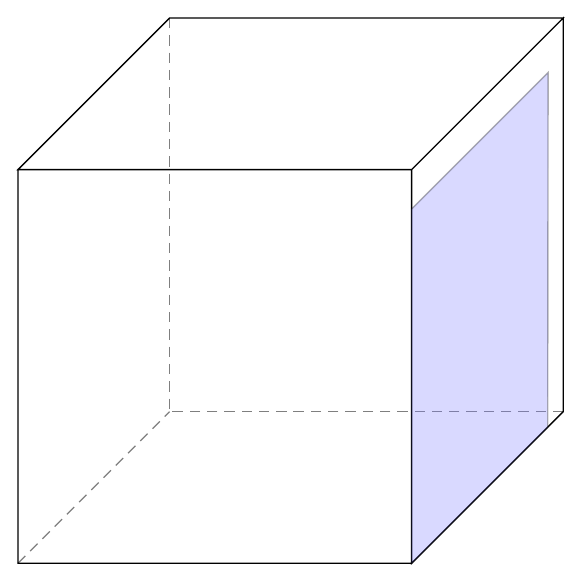}
		\hfill
	\end{subfigure}
	\begin{subfigure}{.32\textwidth}
		\hfill
		\includegraphics[scale=.7]{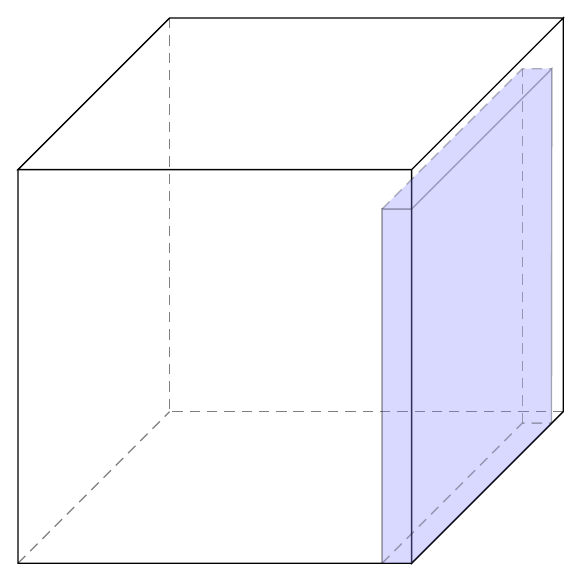}
	\end{subfigure}
	\caption{A lower subset $L_u(F)$ and neighborhood $N_\epsilon L_u(F)$ of $F = (\emptyset, \{2,3\}, \{1\})$.}
	\label{F: lower subspace and nbhd}
\end{figure}

Just as flow lines go between vertices of the cubulation, respecting their order, we now show that the flow takes a lower neighborhood of
 one face $F$ to a neighborhood of the ``next'' face $F^+$, 
whose initial vertex is the terminal vertex of $F$.

\begin{lemma} \label{L: flow to initial and terminal faces}
	Let $F$ be a face of $\interval^n$. For any $\epsilon > 0$ and $u,r \in (0, 1)$ we have
	\begin{align*}
	\f_{t}(N_r U_u (F)) &\subseteq N_\epsilon (F^+), \\
	\f_{-t}(N_r L_u (F)) &\subseteq N_\epsilon (F^-),
	\end{align*}
	for all $t$ sufficiently large.
\end{lemma}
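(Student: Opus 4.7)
The plan is to reduce everything to a coordinate-wise analysis, since both the logistic vector field and the subsets $N_r U_u(F)$, $N_r L_u(F)$, $N_\epsilon(F^\pm)$ are defined as products over the $n$ coordinate directions. Concretely, by Lemma \ref{L: bernoulli equation}, the flow factors as $\f_t(x_1,\dots,x_n)=(\phi_t(x_1),\dots,\phi_t(x_n))$ where $\phi_t$ is the 1D logistic function, and each neighborhood in question is the product of constraints on individual coordinates. So it suffices to track, for each index $j \in \overline{n}$, what constraint the hypothesis places on $x_j$ and what constraint the conclusion requires on $\phi_t(x_j)$, and then show that the former forces the latter for $t$ sufficiently large (uniformly in $x$).

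For the first inclusion, I would split the indices according to the partition $F=(F_0,F_{01},F_1)$ and recall that $F^+=(\emptyset,F_0,F_1\cup F_{01})$. The constraint $x\in N_r U_u(F)$ says $x_j<r$ for $j\in F_0$, $x_j\geq u$ for $j\in F_{01}$, and $x_j>1-r$ for $j\in F_1$, while $\f_t(x)\in N_\epsilon(F^+)$ requires $\phi_t(x_j)>1-\epsilon$ for $j\in F_1\cup F_{01}$ and imposes no constraint on $j\in F_0$ (these are free in $F^+$). Setting $c=\min(u,1-r)>0$, every relevant initial coordinate satisfies $x_j\geq c$. Since $\phi_t$ is monotonically increasing in its initial value (visible from the explicit formula, or from uniqueness of ODE solutions preventing flow lines from crossing), we have $\phi_t(x_j)\geq \phi_t(c)$, and the explicit formula shows $\phi_t(c)\to 1$ as $t\to +\infty$. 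Choosing $T$ so that $\phi_T(c)>1-\epsilon$ handles all $x\in N_r U_u(F)$ simultaneously; for $j\in F_0$ no constraint need be checked.

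The second inclusion is entirely symmetric: with $F^-=(F_0\cup F_{01},F_1,\emptyset)$, the conclusion demands $\phi_{-t}(x_j)<\epsilon$ only for $j\in F_0\cup F_{01}$, on which the hypothesis gives $x_j\leq \max(r,u)<1$. Setting $c'=1-\max(r,u)>0$ and using that $\phi_{-t}$ is monotone in the initial value with $\phi_{-t}(1-c')\to 0$ as $t\to+\infty$ (again from the closed form in Lemma \ref{L: bernoulli equation}, or equivalently via the involution $x\mapsto 1-x$ which conjugates the flow to its time-reverse), the same uniform argument applies.

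The only subtle point, and the one I would be most careful about, is uniformity across the neighborhood: the naive pointwise statement ``$\phi_t(x_j)\to 1$ whenever $x_j>0$'' is not enough, since the convergence fails to be uniform as $x_j\to 0$. The entire content of the lemma is that the hypothesis $x\in N_r U_u(F)$ bounds the relevant coordinates uniformly away from the repelling fixed point $0$ (respectively, $N_r L_u(F)$ bounds them away from $1$), which is exactly what allows a single $T$ to work for all points of the neighborhood. Once this uniform separation is extracted, the rest is a one-line estimate on the scalar logistic function.
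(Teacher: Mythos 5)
Your proof is essentially the paper's: you use coordinate-wise reduction, monotonicity of the logistic flow in its initial value, the explicit formula (equivalently Corollary~\ref{C: limit of points along the flow}), and the key observation that $N_rU_u(F)$ bounds the coordinates indexed by $F_{01}\cup F_1$ uniformly away from the repelling fixed point $0$. If anything, your choice of $c=\min(u,1-r)$ is slightly more careful than the paper's phrasing, which only names $u$ as the lower bound even though coordinates in $F_1$ are bounded below by $1-r$ rather than $u$; the underlying argument and the uniformity point you flag are the same.
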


\begin{proof}
	Recall that if $F = (F_0, F_{01}, F_1)$ then $F^+ = (\emptyset, F_0, F_{01} \cup F_1)$, so the set $N_\epsilon (F^+)$ consists of points $x \in \interval^n$ whose $j\th$ coordinate for $j \in F_{01} \cup F_1$ is greater than $1-\epsilon$.
	Using that all elements in $N_r U_u (F)$ have $j\th$ coordinates for $j \in F_{01} \cup F_1$ greater than or equal to $u > 0$ (i.e.\ these coordinates are bounded away from $0$), Corollary~\ref{C: limit of points along the flow} together with the observation that the flow is order preserving finishes the proof when $t \to \infty$.
	The case when $t \to -\infty$ is proven similarly.
\end{proof}

The next lemma says something about the ``aspect ratios'' of tangent spaces under the flow $\f_t$, comparing the amount of stretching/compressing in directions orthogonal to an initial or terminal face to the amount of stretching/compressing parallel to the face as we move from $T_x\interval^n$ to $T_{y}\interval^n$ with $y=\f_t(x)$. In particular, if we flow long enough and stay close to appropriate upper or lower neighborhoods, we can make these ratios arbitrarily small. 

\begin{lemma} \label{L: jacobian ratios}
	Let $F$ be a face in $\interval^n$. For any $\varepsilon > 0$ and $u \in (0,1)$ there exist $\delta > 0$ and $T\in \R$ such that for any $y \in N_\delta L_u(F^+)$ and $t>T$ we have the bound on ratios
	\begin{equation} \label{E: jacobian ratios}
	\frac{\left|\left(D_{x}\f_t\right)_{i,i}\right|}
	{\left|\left(D_{x}\f_t\right)_{j,j}\right|} < \varepsilon,
	\end{equation}
	where $x$ is such that $y = f_t(x)$, $i \not\in F^+_{01}$, and $j \in F^+_{01}$. Similarly, $\delta > 0$ and $T\in \R$ can be chosen such that \eqref{E: jacobian ratios} holds for any $y \in N_\delta U_u(F^-)$ and $t>T$ with $y = f_{-t}(x)$, $i \not\in F^-_{01}$, and $j \in F^-_{01}$.
\end{lemma}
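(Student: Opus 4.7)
The plan is to rewrite the diagonal Jacobian entries from Corollary~\ref{C: pushforward of vectors along the flow} entirely in terms of the image $y = \f_t(x)$ using the inverse flow formula \eqref{E: inverse flow}. A short manipulation yields the identity
\begin{equation*}
(D_x \f_t)_{k,k} \ =\ \frac{\bigl(e^t(1-y_k) + y_k\bigr)^2}{e^t},
\end{equation*}
which is the engine of the argument: the hypotheses defining $N_\delta L_u(F^+)$ constrain $y$ directly, so a bound phrased in terms of $y$ is exactly what we want.

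With this identity in hand the two relevant regimes become transparent. If $y_k \leq u$ then $e^t(1-y_k) + y_k \geq e^t(1-u)$, giving $(D_x \f_t)_{k,k} \geq e^t(1-u)^2$, which grows exponentially in $t$. If instead $y_k > 1 - \delta$ then $e^t(1-y_k) + y_k \leq e^t\delta + 1$, giving $(D_x \f_t)_{k,k} \leq e^t(\delta + e^{-t})^2$. For $y \in N_\delta L_u(F^+)$, the condition $i \notin F^+_{01} = F_0$ together with $F^+_0 = \emptyset$ forces $i \in F^+_1$, so $y_i > 1 - \delta$; while $j \in F^+_{01}$ forces $y_j \leq u$. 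Combining the two bounds shows the ratio is at most $\bigl((\delta + e^{-t})/(1-u)\bigr)^2$, which is less than $\varepsilon$ once both $\delta$ and $e^{-T}$ lie below $(1-u)\sqrt{\varepsilon}/2$.

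For the second half I would invoke the involution $(x_1,\dots,x_n) \mapsto (1-x_1,\dots,1-x_n)$ of $\interval^n$, which negates the logistic vector field and hence intertwines $\f_t$ with $\f_{-t}$, swaps $F_0$ with $F_1$ in every face, and sends the $(\cdot)^-$ construction to the $(\cdot)^+$ construction of the flipped face; under this involution $N_\delta U_u(F^-)$ corresponds to $N_\delta L_u$ of the flipped $F^+$, reducing the second estimate to the first. Equivalently, a direct computation using the companion identity $(D_x \f_{-t})_{k,k} = (y_k(e^t-1) + 1)^2 / e^t$ (valid when $y = \f_{-t}(x)$) yields the bound by the same bookkeeping.

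The only genuine obstacle is not analytic but combinatorial: carefully tracking how the partition $(F_0, F_{01}, F_1)$ transforms under the $(\cdot)^\pm$ constructions and how these blocks match the free and bound coordinate constraints defining the upper and lower neighborhoods. Once the central identity is in place, everything reduces to the single observation that $e^t\delta + 1$ is negligible compared with $e^t(1-u)$ when $\delta$ is small and $t$ is large.
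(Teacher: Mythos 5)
Your argument is correct and follows the same core strategy as the paper: rewrite the diagonal Jacobian entries of $\f_t$ in terms of the image point $y$ via the inverse flow formula, then exploit the constraints $y_j \leq u$ (free coordinates of $F^+$) and $y_i > 1-\delta$ (bound coordinates) to control the ratio. Your version is slightly sharper in presentation since it produces the explicit uniform bound $\bigl((\delta + e^{-t})/(1-u)\bigr)^2$ rather than appealing to a pointwise $t\to\infty$ limit, and the coordinate-flip involution $\sigma(x) = (1-x_1,\dots,1-x_n)$, which conjugates $\f_t$ to $\f_{-t}$ and exchanges the $(\cdot)^+$ and $(\cdot)^-$ constructions, is a clean way to make precise the paper's remark that the backward case is ``analogous'' (with the harmless bookkeeping that $L_u$ becomes $U_{1-u}$).
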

In other words, given an $\epsilon$, there is a neighborhood $N_\delta L_u(F^+)$ and a time $T$ so that for all greater times every point that has flowed into $N_\delta L_u(F^+)$ has its tangent space sufficiently ``squashed'' by the flow that the ratios in the normal directions to $F^+$ versus the tangential directions are all smaller than $\epsilon$. 

\begin{proof}
	We only prove the forward flow case since the backward flow case is analogous.
	Recall from Equation~\eqref{E: inverse flow} that if $y = \f_t(x)$ then $x = \frac{y}{e^t-y(e^t-1)}$. 
	Plugging this into the Jacobian formula of Corollary \ref{C: pushforward of vectors along the flow} gives diagonals of the Jacobian of the form
	\begin{equation*}
	\frac{e^t}{\left(\frac{y}{e^t-y(e^t-1)}(e^t -1) + 1\right)^2} \ = \
	\frac{e^t}{\left(\frac{e^t}{e^t-y(e^t-1)}\right)^2} \ = \
	\frac{\left(e^t-y(e^t-1)\right)^2}{e^t}.
	\end{equation*}
	For any two coordinates we obtain 
	\begin{equation*}
	\frac{\left|\left(D_{x}\f_t\right)_{i,i}\right|}
	{\left|\left(D_{x}\f_t\right)_{j,j}\right|} = \left(\frac{e^t-y_i(e^t-1)}{e^t-y_j(e^t-1)}\right)^2,
	\end{equation*}
	whose limit as $t$ goes to infinity is equal to $\frac{1-y_i}{1-y_j}$. 
	Thus, a given bound $y_j \leq u<1$ for $j \in F^+_{01}$ allows us to find a bound $y_i > 1- \delta$ for $i \not\in F^+_{01}$ making this ratio as small as desired for sufficiently large $t$.
\end{proof}

We will also need the following simple lemma.

\begin{lemma}\label{L: domain flow}
	Let $F$ be a terminal face of $\interval^n$, and let $v$ be its initial vertex so that $F=v^+$. Let $0<u<1$, and let $D$ be any neighborhood of $v$ in $F$ containing $L_u(F)$. Then for all $t\geq 0$ we have $L_u(F)\subset \f_t(D)$. Similarly, if $F$ is an initial face of $\interval^n$ and $v$ its initial vertex so that $F=v^-$ and if $D$ is any neighborhood of $v$ in $F$ containing $U_u(F)$ then for all $t\geq 0$ we have $U_u(F)\subset \f_{-t}(D)$.
\end{lemma}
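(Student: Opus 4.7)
The plan is to reduce the statement to the one-variable monotonicity of the logistic flow on $[0,1]$ combined with the naturality of $\f$ under face inclusions.

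First I would use Lemma \ref{L: f is natural} to observe that the flow preserves every face of $\interval^n$ and restricts there to the logistic flow on that subcube. Concretely, for $F = v^+$ with $v = (v_0, \emptyset, v_1)$, we have $F_1 = v_1$ and $F_{01} = v_0$, so on $F$ the flow $\f_t$ fixes every coordinate $x_j$ with $j \in v_1$ (where $x_j \equiv 1$) and acts on the free coordinates $x_j$, $j \in v_0$, by the product of one-dimensional logistic flows. In particular $\f_t(F) = F$ and $\f_t(D) \subseteq F$ for every $t$.

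Second, I would note that the vector field $x(1-x)$ is nonnegative on $[0,1]$, so each one-dimensional logistic flow is monotone nondecreasing in $t$: $\f_t(x) \geq x$ for $t \geq 0$, with equality precisely at the fixed points $x \in \{0,1\}$ (this is also immediate from the explicit formula in Lemma \ref{L: bernoulli equation}). Consequently $\f_{-t}(x) \leq x$ for $t \geq 0$ and $x \in [0,1]$. Applying this coordinate-wise to the free coordinates of $F$, we conclude
\begin{equation*}
\f_{-t}\bigl(L_u(F)\bigr) \subseteq L_u(F) \qquad \text{for all } t \geq 0,
\end{equation*}
since for $x \in L_u(F)$ each free coordinate $x_j \in [0,u]$ gets flowed backward to a value still in $[0,u]$, while the bound coordinates remain equal to $1$. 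Combining this with the hypothesis $L_u(F) \subseteq D$ yields $\f_{-t}(L_u(F)) \subseteq D$, and applying the diffeomorphism $\f_t$ gives the desired inclusion $L_u(F) \subseteq \f_t(D)$.

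The case of an initial face $F = v^-$ is strictly analogous, with the roles of forward and backward time exchanged: the bound coordinates (indexed by $v_0$) stay equal to $0$, the free coordinates (indexed by $v_1$) belong to $[u,1]$, and forward flow is monotone nondecreasing, so $\f_t(U_u(F)) \subseteq U_u(F) \subseteq D$ for $t \geq 0$, hence $U_u(F) \subseteq \f_{-t}(D)$. There is no real obstacle here; the lemma is essentially a bookkeeping consequence of one-dimensional monotonicity together with naturality, and the main point to state cleanly is that the flow preserves $F$ and acts trivially on its bound coordinates.
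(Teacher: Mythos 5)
Your proof is correct and follows essentially the same route as the paper's: both observe that the flow fixes the bound coordinates, that the one-variable logistic flow is monotone on $[0,1]$ so the free coordinates of $L_u(F)$ (resp.\ $U_u(F)$) stay inside $[0,u]$ (resp.\ $[u,1]$) under backward (resp.\ forward) flow, and then both conclude by applying the diffeomorphism $\f_t$ (resp.\ $\f_{-t}$). Your write-up is slightly more explicit about the use of Lemma~\ref{L: f is natural} and the coordinate-wise reduction, but the underlying argument is the same.
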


\begin{proof}
	Suppose the first set of hypotheses, and let $y\in L_u(F)$. In particular, this means that if the $i\th$ coordinate of $y$ is a free variables of $F$ we have $y_i<\epsilon$ and otherwise $y_i=1$. As the flow is non-decreasing in each free coordinate and does not move the bound coordinates, we have $\f_{-t}(y)\in L_u(F)$ for $t\geq 0$, and hence $y\in \f_t(L_u(F))\subset \f_t(D)$. The argument for the second set of hypotheses is analogous.
\end{proof}

\subsection{Geometric cochains and intersections}

In this subsection we show that the logistic flow on a manifold cubulated by $|X| \to M$ is compatible with key geometric cochain structures.

\begin{lemma} \label{l:flow preserves transversality}
	If $W \in \cman_{\pf}(M)$, then $\f_t(W)\in \cman_{\pf}(M)$ for all $t \in \R$.
\end{lemma}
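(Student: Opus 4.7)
The plan is to interpret $\f_t(W)$ as the c-manifold over $M$ obtained from $W$ by keeping the same underlying domain but replacing the reference map $r_W$ by $\f_t \circ r_W$, with co-orientation obtained by composing that of $r_W$ with the intrinsic co-orientation of the diffeomorphism $\f_t$. The content of the lemma then reduces to verifying that $\f_t \circ r_W$ remains transverse to every characteristic map $\iota_c \colon \interval^d \to M$ of the cubulation, stratum by stratum.

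The essential input is naturality. By construction, the logistic vector field on $M$ pulls back via each $\iota_c$ to the standard logistic vector field $\f^d$ on $\interval^d$, via Lemma~\ref{L: f is natural}. Passing to flows gives the identity
\begin{equation*}
\f_t \circ \iota_c = \iota_c \circ \f_t^d,
\end{equation*}
where $\f_t^d$ denotes the standard logistic flow on $\interval^d$. In particular $\f_t$ preserves the image of every cube in $M$, and since $\f_t^d$ is tangent to every face of $\interval^d$ and has no zeros in the interior of any face, it restricts to a self-diffeomorphism of $\interval^d$ that preserves each corner stratum $S^\ell(\interval^d)$.

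Granting this, I would fix a cube $c$ and corner-stratum indices $k, \ell$ and check transversality of $(\f_t \circ r_W)|_{S^k(W)}$ with $\iota_c|_{S^\ell(\interval^d)}$ at each point of their fiber product. Any such pair $(w, y)$ with $\f_t(r_W(w)) = \iota_c(y)$ satisfies $r_W(w) = \iota_c(\f_{-t}^d(y))$, and applying the linear isomorphism $D\f_{-t}$ to the spanning condition identifies transversality at $(w,y)$ with the requirement that $r_W|_{S^k(W)}$ be transverse to $(\iota_c \circ \f_{-t}^d)|_{S^\ell(\interval^d)}$ at the corresponding point $(w, \f_{-t}^d(y))$. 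Since precomposing $\iota_c$ with the self-diffeomorphism $\f_{-t}^d|_{S^\ell(\interval^d)}$ does not affect transversality, this last condition is equivalent to transversality of $r_W|_{S^k(W)}$ with $\iota_c|_{S^\ell(\interval^d)}$, which holds by hypothesis.

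The point most deserving of care is not the transversality argument itself, which is essentially formal once naturality is invoked, but rather verifying that $\f_t(W)$ is a bona fide element of $\cman(M)$: namely, that $\f_t \circ r_W$ is weakly smooth across the cube boundaries of $M$ and that the composed co-orientation is globally well defined. Both follow from $\f_t$ being smooth on each cube together with the naturality identity above, but they merit explicit attention; properness is then immediate since $\f_t$ is a homeomorphism of $M$.
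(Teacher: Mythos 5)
Your proposal is correct and follows essentially the same route as the paper's proof: both rest on the observation that, by naturality of the logistic field, $\f_t$ restricts to a self-diffeomorphism of each cube preserving its strata, and that transversality is invariant under postcomposition of the ambient map by (or equivalently, precomposition of the characteristic map with) such a diffeomorphism. The paper states this in a single sentence; your version spells out the stratum-by-stratum reduction and the identity $\f_t \circ \iota_c = \iota_c \circ \f_t^d$, but the underlying idea is identical.
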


\begin{proof}
	A point of some cube $E$ that is in the image of $\f_t(W)$ is the image under the flow diffeomorphism of a point of $E$ in the image of $W$, and the transversality condition is preserved by diffeomorphism.
\end{proof}

\begin{lemma}
	For any $W \in \cman_{\pf}(M)$ and complementary cube $E$,
	\begin{equation*}
	I_M(W, E) = I_M(\f_t(W), E)
	\end{equation*}
	for all $t \in \R$.
\end{lemma}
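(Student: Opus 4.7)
The plan is to produce a sign-preserving bijection between the intersection signed sets $\mathrm{Int}_M(W, E)$ and $\mathrm{Int}_M(\f_t(W), E)$, from which equality of the intersection numbers will follow immediately. The key geometric fact driving everything is that $\f_t$ restricts to an orientation-preserving self-diffeomorphism of each cube of the cubulation, preserving each face setwise.

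First I would verify this geometric fact. By Lemma \ref{L: bernoulli equation}, the flow satisfies $x_i(t) = \varepsilon$ whenever $x_i(0) = \varepsilon$ for $\varepsilon \in \{0,1\}$; combined with the naturality across face inclusions in Lemma \ref{L: f is natural}, this shows $\f_t$ restricts to a self-diffeomorphism of $\iota_E(E)$ for each cube $E$, preserving each sub-face. Moreover, Corollary \ref{C: pushforward of vectors along the flow} gives that the Jacobian of $\f_t$ on $\interval^n$ is diagonal with strictly positive entries, so both $\f_t:M\to M$ and its restriction to each cube $E$ are orientation-preserving.

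Next I would set up the bijection. Recall that $\f_t(W)$ has the same underlying c-manifold as $W$ and reference map $\f_t\circ r_W$. A pair $(p,e) \in W \times E$ lies in $\mathrm{Int}_M(\f_t(W), E)$ exactly when $\f_t(r_W(p)) = \iota_E(e)$, which by the cube-preserving property of $\f_t$ is equivalent to $r_W(p) = \iota_E(\f_{-t}(e))$. Hence the map $(p,e) \mapsto (p, \f_t(e))$ defines a bijection $\mathrm{Int}_M(W, E) \to \mathrm{Int}_M(\f_t(W), E)$. Transversality of $\f_t(W)$ with respect to the cubulation, ensured by Lemma \ref{l:flow preserves transversality}, guarantees that these are indeed the full pull-back sets.

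For sign preservation, at the image point $\f_t(r_W(p))$ the normal co-orientation of $\f_t(W)$ is the $D\f_t$-pushforward of the normal co-orientation of $W$ at $r_W(p)$, by the definition of the composed co-orientation with the intrinsic co-orientation of the diffeomorphism $\f_t:M\to M$. Similarly, $D\f_t$ restricted to $E$ sends the local orientation of $E$ at $e$ to the local orientation of $E$ at $\f_t(e)$, since $\f_t|_E$ is orientation-preserving. Because the same linear map $D\f_t$ transports both the normal framing and the tangential framing, the sign obtained by comparing them is unchanged under the bijection. Summing gives $I_M(W,E) = I_M(\f_t(W), E)$. The only real obstacle is the bookkeeping of how $D\f_t$ simultaneously transports the normal bundle of $W$ and the tangent space of $E$, but once the positivity of the Jacobian is in hand, this reduces to the tautology that the relative sign of two framings is invariant under their common pushforward.
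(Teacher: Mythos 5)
Your proof is correct and takes essentially the same approach as the paper, which in two sentences invokes diffeomorphism-invariance of intersection numbers together with $\f_t(E) = E$; you have simply unpacked that one-liner into an explicit sign-preserving bijection using the same two geometric facts (that $\f_t$ preserves each cube setwise and is orientation-preserving on each cube).
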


\begin{proof}
	As $\f_t$ is an orientation-preserving diffeomorphism of $M$, we have $I_M(W, E) = I_M(\f_t(W), \f_t(E))$,
	and, since $\f_t$ restricts to an orientation preserving diffeomorphism of $E$, we also have $\f_t(E) = E$.
\end{proof}

\begin{lemma}
	The logistic flow is well defined on geometric cochains in $C^*_{\Gamma \pf}(M)$.
\end{lemma}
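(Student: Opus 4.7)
The plan is to exploit that the logistic flow is an (orientation-preserving) diffeomorphism of $M$, so that postcomposition with $\f_t$ is a well-behaved operation on every ingredient in the construction of $C^*_{\Gamma \pf}(M)$. Concretely, for $W \in \cman^*(M)$ with reference map $r_W$, define $\f_t(W)$ to be the same underlying manifold with corners, but with reference map $\f_t \circ r_W$, and with the co-orientation obtained by composing the co-orientation of $r_W$ with the intrinsic co-orientation of the diffeomorphism $\f_t$. I will then show that this assignment passes through all the equivalence relations used in defining $C^*_{\Gamma \pf}(M)$.

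First I will verify that $\f_t$ descends to $\cman^*(M)$, i.e.\ respects the two relations of Definition~\ref{V: maps are co-oriented}. Disjoint unions are preserved since the construction is performed componentwise, and reversal of co-orientation is preserved since $\f_t$ is orientation preserving. Next, I will check that $\f_t$ preserves equivalence of elements of $\cman(M)$: given $\phi \colon W \to W'$ implementing such an equivalence, the same $\phi$ works for $\f_t(W)$ and $\f_t(W')$ because $\f_t \circ r_{W'} \circ \phi = \f_t \circ r_W$ and composition of co-orientations is associative. Combined with Lemma~\ref{l:flow preserves transversality}, this shows $\f_t$ gives a well-defined endomorphism of $\cman_{\pf}^*(M)$.

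The core step is to show $\f_t$ preserves the subgroup $Q^*(M) \cap \cman_\pf^*(M)$, which amounts to checking that it takes trivial maps to trivial maps and degenerate maps to degenerate maps (in the sense of Definition~\ref{D: cman types}). For triviality, a witnessing diffeomorphism $\rho \colon W \to W$ satisfies $r_W \circ \rho = r_W$, so $(\f_t \circ r_W) \circ \rho = \f_t \circ r_W$, and composing with the co-orientation of $\f_t$ preserves the property of being opposite. For small rank, since $\f_t$ is a local diffeomorphism, $D(\f_t \circ r_W) = (D\f_t)\circ (Dr_W)$ has the same rank as $Dr_W$ at every point, so small rank is preserved. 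For degeneracy, observe that $\partial \f_t(W) = \f_t(\partial W)$ as co-oriented c-manifolds over $M$ (the flow acts only on the reference map, not on the domain, so boundary commutes with the flow), whence the decomposition of $\partial W$ into a trivial part plus a small rank part is carried by $\f_t$ to such a decomposition of $\partial \f_t(W)$.

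Putting these together, $\f_t$ defines a homomorphism $\cman_\pf^*(M) \to \cman_\pf^*(M)$ that sends $Q^*(M) \cap \cman_\pf^*(M)$ into itself, and hence induces a well-defined homomorphism on the quotient $C^*_{\Gamma \pf}(M)$. I don't expect any real obstacles here; all the content is in the two previous lemmas (preservation of transversality and of intersection numbers) together with the observation that a diffeomorphism of $M$ manifestly preserves triviality, small rank, and degeneracy because each of those conditions is formulated purely in terms of the reference map and its differential.
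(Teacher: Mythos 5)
Your proof is correct and takes the same approach as the paper: show that postcomposition with the diffeomorphism $\f_t$ preserves $Q^*(M)$ (and transversality), so it descends to the quotient $C^*_{\Gamma\pf}(M)$. The paper condenses the key step to a single clause (``the flow $\f_t$ is a diffeomorphism and so preserves membership in $Q^*(M)$''), whereas you usefully spell out the verification for triviality, small rank, and degeneracy.
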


\begin{proof}	
	If ${W}$ and ${W}'$ represent the same element of $C^*_{\Gamma \pf}(M)$, then $ W- W' = A \in Q^*(M)$. By applying $\f_t$ we see that $\f_t( W)-\f_t( W') = \f_t( W - W') = \f_t(A)$, but the flow $\f_t$ is a diffeomorphism and so preserves membership in $Q^*(M)$.	
\end{proof} 


\section{Flow comparison theorem} \label{S: flow comparison theorem}

\subsection{Statement}

We now come to the central result of this paper, which we restate here using the notation developed above.

\begin{theorem} \label{T: main theorem}
	Let $M$ be a cubulated closed manifold. For $W, V \in \cman_\pf(M)$ and $t$ sufficiently large:
	\begin{enumerate}
		\item $\f_t(W)$ and $\f_{-t}(V)$ are transverse and
		\begin{equation*}
		\cI \big( \f_t(W) \times_M \f_{-t}(V) \big) =
		\cI \big( \f_t(W) \big) \sms \cI \big( \f_{-t}(V) \big).
		\end{equation*}
		\item $\f_{-t}(W)$ and $\f_t(V)$ are transverse and
		\begin{equation*}
		\cI \big( \f_{-t}(W) \times_M \f_t(V) \big) =
		(-1)^{|V||W|} \, \cI \big( \f_t(V) \big) \sms \cI \big( \f_{-t}(W) \big),
		\end{equation*}
		where we recall that $|W|$ and $|V|$ are the codimensions of $W$ and $V$ over $M$.
	\end{enumerate}
\end{theorem}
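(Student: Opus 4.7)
The plan is to prove statement (1) cube by cube; statement (2) will follow by symmetry. First, a dimensional check shows both sides of the claimed identity vanish on any cube $E$ with $\dim E \neq n := |W|+|V|$, so I reduce to top-dimensional cubes $E$ with $\dim E = n$. On such an $E$, Proposition~\ref{P: diagonal in terms of vertices} together with the fact that each $\f_{\pm t}$ is an orientation-preserving diffeomorphism preserving every face yields
\begin{equation*}
\bigl(\cI(\f_t(W)) \sms \cI(\f_{-t}(V))\bigr)(E) \;=\; \sum_v \sh(v)\,I_M(W,v^-)\,I_M(V,v^+),
\end{equation*}
where the sum ranges over vertices $v$ of $E$ with $\dim v^-=|W|$ (all other terms vanishing for degree reasons). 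The goal is to show that the left-hand side $\cI(\f_t(W) \times_M \f_{-t}(V))(E)$ admits the same expansion for $t$ large.

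The heart of the argument is a local analysis near each such $v$, with uniformity of $t$ across the finite cubulation guaranteed by the compactness of $M$. Using Lemma~\ref{L: flow to initial and terminal faces} together with the flow-invariance of each face, I first show that for $t$ sufficiently large every intersection point of $\f_t(W)$ and $\f_{-t}(V)$ inside $E$ lies in an arbitrarily small neighborhood of some vertex $v$ of $E$ with $\dim v^- = |W|$; at other vertices, one of the two submanifolds lacks a limiting branch of the required dimension. I then invoke Lemma~\ref{L: jacobian ratios}: at points flowing into a small neighborhood of such $v$, the tangent spaces of $\f_t(W)$ approach the $v^+$-tangent directions while those of $\f_{-t}(V)$ approach the $v^-$-tangent directions. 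Since $Tv^-\oplus Tv^+ = T_vM$, this simultaneously establishes transversality of $\f_t(W)$ with $\f_{-t}(V)$ and organizes the picture near $v$ into disjoint ``branches'' of $\f_t(W)$ indexed by $W\cap v^-$ (one per point $p$, obtained as $\f_t$ of a small neighborhood of $p$ in $W$), and branches of $\f_{-t}(V)$ indexed by $V\cap v^+$. Each pair of branches meets transversely at exactly one point, yielding a bijection between intersection points near $v$ and pairs $(p,q)\in(W\cap v^-)\times(V\cap v^+)$.

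The sign at the intersection coming from $(p,q)$ is computed from Theorem~\ref{T: pull-back co-or}(4): the oriented normal bundle of the fiber product is the direct sum $\nu_{\f_t(W)}\oplus\nu_{\f_{-t}(V)}$. In the $t\to\infty$ limit, these align with $Tv^-$ and $Tv^+$, respectively, and since $\f_{\pm t}$ preserves orientations, the transported orientations are $\epsilon_p\beta_{v^-}$ and $\epsilon_q\beta_{v^+}$, where $\epsilon_p,\epsilon_q\in\{\pm1\}$ are the local intersection signs defining $I_M(W,v^-)$ and $I_M(V,v^+)$. The intersection sign at $(p,q)$ is therefore $\epsilon_p\epsilon_q$ times the sign of $\beta_{v^-}\wedge\beta_{v^+}$ relative to the standard orientation $\e_1\wedge\cdots\wedge\e_n$ of $E$, which is by definition the shuffle sign $\sh(v)$ (since $\beta_v$ is empty for a vertex). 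Summing over all $(p,q)$ and all qualifying $v$ reproduces the Serre diagonal expansion above.

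The main obstacle will be the quantitative branch analysis: ensuring that distinct pairs $(p,q)$ produce distinct transverse intersection points and that no spurious additional intersections appear demands uniform control of branch separation and near-tangential behavior, which is precisely what the $\varepsilon$-bounds of Lemma~\ref{L: jacobian ratios} and the neighborhood estimates of Lemma~\ref{L: domain flow} are designed to provide. Finally, statement (2) follows from (1) applied to the pair $(V,W)$ combined with the commutativity relation of Theorem~\ref{T: pull-back co-or}(2), which exchanges $\f_{-t}(W)\times_M\f_t(V)$ with $\f_t(V)\times_M\f_{-t}(W)$ at the cost of the sign $(-1)^{|W||V|}$.
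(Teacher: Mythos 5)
Your proposal follows essentially the same approach as the paper: reduce to top-dimensional cubes, decompose $W$ and $V$ into local branches near faces of complementary dimension (the paper formalizes these as ``graph-like neighborhoods'' in Definition~\ref{D: graph-like}), use Lemmas~\ref{L: flow to initial and terminal faces} and \ref{L: jacobian ratios} to show intersections concentrate near vertices and are transverse, compute signs via Theorem~\ref{T: pull-back co-or} and the shuffle sign to match the Serre diagonal, and deduce (2) from (1) via commutativity. The paper packages the transversality statement (Theorem~\ref{T: transversality}), the locality reduction (Proposition~\ref{P: locality}), the elimination of residual intersections ($Z_W^{\init}$, $Z_V^{\term}$), and the single-point intersection count (Lemma~\ref{L: flow intersection for graph-like nbhds}) as separate lemmas, but the conceptual argument is the one you describe.
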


The time needed to flow to obtain the equality between intersection and cup product will vary depending on $W$ and $V$, but on any finite subcomplex we have the following uniformity.

\begin{corollary} \label{C: finite diagram}
	Let $M$ be a closed manifold with smooth cubulation $|X|\to M$, and let $F^*$ be a finitely-generated chain complex with chain map $g:F^*\to C^*_{\Gamma \pf}(M)$.
	Then, there is a $T \in \R$ such that for all $t > T$ the following diagram commutes:
	\begin{equation*}
	\begin{tikzcd} [row sep=tiny]
	& C^*_{\Gamma \pf}(M)^{\otimes 2} \arrow[r, "\cI \otimes \cI"] & \cochains(X)^{\otimes 2} \arrow[dd, "\sms"] \\
	F^*\otimes F^* \arrow[ur, in=180, out=45,"g\otimes g"] \arrow[dr, in=180, out=-45, "\f_t \circ g \; \times_M \; \f_{-t} \circ g"']& & \\
	& C^*_{\Gamma \pf}(M) \arrow[r, "\cI"] & \cochains(X).
	\end{tikzcd}
	\end{equation*}
\end{corollary}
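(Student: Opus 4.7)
The plan is to reduce the corollary to the pointwise statement of Theorem~\ref{T: main theorem} by exploiting the finiteness of a generating set. Since $F^*$ is finitely generated as a graded abelian group, fix a finite set of generators $e_1,\dots,e_k$ and set $W_i = g(e_i) \in C^*_{\Gamma \pf}(M)$. For each ordered pair $(i,j)$, Theorem~\ref{T: main theorem}(1) yields a threshold $T_{ij}$ such that for every $t > T_{ij}$ the maps $\f_t(W_i)$ and $\f_{-t}(W_j)$ are transverse and
\begin{equation*}
\cI\bigl(\f_t(W_i) \times_M \f_{-t}(W_j)\bigr) = \cI\bigl(\f_t(W_i)\bigr) \sms \cI\bigl(\f_{-t}(W_j)\bigr).
\end{equation*}
Set $T = \max_{1 \leq i,j \leq k} T_{ij}$, which is finite because we are taking the maximum over finitely many pairs. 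This is the uniform time I would propose for the statement.

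Next, I would verify that commutativity of the diagram on generator pairs $e_i \otimes e_j$ implies commutativity on all of $F^* \otimes F^*$. Given $\alpha = \sum_i a_i e_i$ and $\beta = \sum_j b_j e_j$ and any $t > T$, the maps $g$, $\f_{\pm t}$, $\cI$, and $\sms$ are all linear (respectively bilinear) in their inputs, so the top-right composition of the diagram evaluates to $\sum_{i,j} a_i b_j \, \cI(\f_t(W_i)) \sms \cI(\f_{-t}(W_j))$. For the bottom composition, I would use that addition in $\cman^*(M)$ is modeled by disjoint union of representatives, so the fiber product distributes: transversality of each pair $(\f_t(W_i), \f_{-t}(W_j))$ guarantees that $\f_t(g(\alpha))$ and $\f_{-t}(g(\beta))$ are transverse in the sense of Definition~\ref{D: cochain trans}, and the resulting cochain-level fiber product decomposes as $\sum_{i,j} a_i b_j \, \f_t(W_i) \times_M \f_{-t}(W_j)$. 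Applying $\cI$ and invoking the equality for each generator pair then yields the desired identity for $\alpha \otimes \beta$.

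The only delicate point is justifying that the partially defined product $\times_M$ genuinely distributes at the cochain level over the relevant sums. For this I would appeal to Definition~\ref{D: cochain trans} and Theorem~\ref{P: product}: pairwise transversality of the generators' images after flow is exactly what is required to choose a compatible representative of $\f_t(g(\alpha)) \bullet_M \f_{-t}(g(\beta))$, and once such representatives are chosen, distributivity is inherited from the disjoint-union convention in $\cman^*(M)$. I expect this bookkeeping to be the main obstacle, since all the analytic content already resides in Theorem~\ref{T: main theorem}; the remaining work is purely about passing from the pointwise equality on generators to a diagram of linear maps, taking care that the partial product is everywhere defined for $t > T$.
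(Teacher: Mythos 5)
Your argument is correct and matches the paper's (implicit) reasoning: the corollary is stated without proof precisely because it follows from Theorem~\ref{T: main theorem} by choosing representatives in $\cman_\pf(M)$ for the images of the finitely many generators, taking the maximum of the finitely many thresholds, and extending by bilinearity of $\times_M$, $\cI$, and $\sms$. Your attention to the cochain-level bookkeeping via Definition~\ref{D: cochain trans} and Theorem~\ref{P: product} is exactly the right way to pass from the $\cman_\pf(M)$-level statement to the diagram of maps on $C^*_{\Gamma \pf}(M)$.
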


In particular, if $F^*$ is the subcomplex of $C^*_\Gamma(M)$ generated by two cochains $\uW$ and $\uV$, this says that the diagram 
commutes for large enough $t$ starting with the chain $\uW\otimes\uV$ on the left, recapitulating Theorem~\ref{T: main theorem} as a statement about cochains, not just elements of $\cman_\pf(M)$.
There are several other candidates for a useful finitely-generated complex $F^*$, 
with the most desirable being those whose maps to $C^*_{\Gamma \pf}(M)$ induce quasi-isomorphisms.  In that case, Corollary ~\ref{C: finite diagram}
shows that the finitely-generated complex provides a geometric model for cubical cochains, though only as a differential graded associative algebra.
As discussed in the Introduction, we plan to strengthen this connection beyond the associative setting in future work.

The ideal  example of a finitely-generated subcomplex of geometric cochains that is quasi-isomorphic to cubical cochains through intersection
would be generated by a dual complex to the cubulation whose cells are smooth manifolds with corners.
Unfortunately, it is not clear that such dual complexes always exist except in special cases, for example on two-dimensional manifolds.

Another example we can give for a useful $F^*$ comes from considering an oriented manifold $M$ with a smooth triangulation $|K|\to M$ by a finite simplicial complex $K$ with a vertex ordering. 
Using the orientation $\beta_M$ of $M$, every simplicial face inclusion $\sigma\to M$ with $\sigma$ a simplex of $K$ is co-oriented by $(\beta_\sigma,\beta_M)$, where $\beta_\sigma$ is the standard orientation that arises from the ordering of the vertices of $\sigma$. 
Thus we have an inclusion homomorphism $C_*(K)\into C^{m-*}_\Gamma(M)$, where $m = \dim(M)$.
This inclusion is a quasi-isomorphism, as the inclusion of $C_*(K)$ into the smooth singular chain complex $C_*^{ssing}(M)$ of $M$ is a known quasi-isomorphism, the map $C_*^{ssing}(M) \to C_*^\Gamma(M)$ is observed to be a quasi-isomorphism in \cite[Section 10]{Lipy14}, and $C_*^\Gamma(M)=C^{m-*}_\Gamma(M)$ for $M$ closed and oriented (see \cite[Section 12]{Lipy14}). 
We may then modify the triangulation $|K|\to M$ by postcomposing with a map $f \colon M\to M$ that is homotopic to the identity but which shifts each simplex of $K$ into general position with respect to every face inclusion of the cubulation $|X|\to M$.
This can be done as in the proof of Theorem \ref{T: transverse complex}, as given in \cite{FMS-foundations}, using the techniques of \cite[Section 2.3]{GuPo74}.
As $f$ is homotopic to the identity, we obtain the composite quasi-isomorphism $C_*(K)\into C^{m-*}_\Gamma(M) \xr{f} C^{m-*}_\Gamma(M)$, but now with image of the composite in $C^{m-*}_{\Gamma \pf X}(M)$. So by the following diagram,
\begin{equation*}
\begin{tikzcd}[column sep=tiny, row sep=tiny]
C_*(K) \arrow[rr, "f"] \arrow[dr, out=-90, in=180]& & C^{m-*}_{\Gamma}(M) \\
& C^{m-*}_{\Gamma \pf X}(M) \arrow[ur, out=0, in =-90] &
\end{tikzcd}
\end{equation*}
we obtain a quasi-isomorphism from a finite chain complex $C_*(K)\to C^{m-*}_{\Gamma \pf X}(M)$ to which our results can be applied with a single bound.

\bigskip

The proof of Theorem~\ref{T: main theorem} is contained in the following sections. 
As the intersection homomorphism is defined through evaluation on cubes in $X$, the proof 
is ultimately ``local," proceeding over isolated cubes.
We thus make the following definition that will be useful throughout, with $E$ being a cube our main case of interest.

\begin{definition}
	Let $M$ be a manifold without boundary and $E$ and $W$ transverse elements of $\cman(M)$ with $E$ embedded.
	We denote the pull-back $W \times_N E$ by $W_E$. By Theorem~\ref{pullback}, $W_E$ is a c-manifold over $E$, and by \cite{FMS-foundations} the map $W_E\to E$ can be endowed with a pull-back co-orientation induced by the co-orientation of $W\to M$ that does not depend on the orientation of $E$.
\end{definition}

\subsection{Transversality}

We begin with the first claim of Theorem~\ref{T: main theorem}, of transversality. The version of transversality needed for all further claims 
is not just over the entire manifold but also for restrictions over any cube.

\begin{theorem} \label{T: transversality}
	Let $M$ be a cubulated manifold.
	Suppose $W$ and $V$ are in $\cman_\pf(M)$.
	Then, for any cube $E$ in $X$, both $\f_t(W)_E$ and $\f_{-t}(W)_E$ are transverse to $V_E$ for $t$ sufficiently large.
\end{theorem}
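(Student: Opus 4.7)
The plan is to reduce to analysis inside a single standard cube and then argue by contradiction via sequential compactness, using the explicit inverse flow formula together with a Grassmannian limit of tangent spaces. By naturality of the logistic flow (Lemma \ref{L: f is natural}), I will work in $\interval^n$ via the characteristic map of $E$, and treat only the forward-flow case; the backward case follows by symmetry, interchanging initial and terminal faces. Transversality of $\f_t(W)_E$ and $V_E$ over the c-manifold $E$ reduces to two conditions: (i) ordinary transversality at each point $p \in \f_t(W) \cap V \cap E^\circ$, and (ii) disjointness of images over each positive-codimension open face $F^\circ \subsetneq E$.

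For both (i) and (ii), suppose for contradiction that the condition fails for arbitrarily large $t$. Extract a sequence $t_k \to \infty$ with witnessing intersection points $p_k \in \f_{t_k}(W) \cap V$; since $V \cap E$ and $E$ are compact, pass to subsequences so that $p_k \to p^* \in V \cap E$ and $q_k := \f_{-t_k}(p_k) \to q^* \in E$. Using the explicit inverse flow formula \eqref{E: inverse flow}, the face $F^*$ of $\interval^n$ containing $q^*$ is determined by the coordinates of $p^*$ together with the rates $p_{k,i} \to p^*_i$; generically $q^*$ turns out to be a vertex of the cubulation. Since $|W| \geq 1$ forces $W$ to be disjoint from the $0$-skeleton by transversality, $q^* \notin W$, contradicting $q_k \in W$ with $W$ closed.

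For condition (ii), the case analysis for $q^*$ is clean enough that any non-vertex limit of $q_k$ can be handled by an induction on the dimension of the face containing it, reducing to the vertex argument at lower-dimensional strata. For condition (i), the vertex-based contradiction directly handles $p^* \in E^\circ$ (where $q^* = \underline{0} \notin W$). When $p^* \in \partial E$, transversality (not merely non-intersection) must be verified, which I do via a Grassmannian limit of tangent spaces: by Corollary \ref{C: pushforward of vectors along the flow} the differential $D_{q_k}\f_{t_k}$ is diagonal, and Lemma \ref{L: jacobian ratios} together with $W$'s transversality to $F^*$ show that, after passing to a further subsequence, $T_{p_k}\f_{t_k}(W)$ converges to a subspace $W_\infty \subseteq T_{p^*}M$ containing $\mathrm{span}(\e_i : i \in F^*_0)$. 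Tracking the flow then shows $F_{p^*,01} \subseteq F^*_0$, where $F_{p^*}$ denotes the face containing $p^*$, so $T_{p^*}F_{p^*} \subseteq W_\infty$. Combined with $V$'s transversality to $F_{p^*}$, this yields $T_{p^*}V + W_\infty = T_{p^*}M$, and the openness of transversality in the Grassmannian gives transversality at $(p_k,t_k)$ for $k$ large --- the desired contradiction.

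The hard part will be the Grassmannian limit analysis in case (i) with $p^* \in \partial E$: the rates at which the diagonal entries of $D_{q_k}\f_{t_k}$ grow or decay depend on the convergence rates $p_{k,i} \to p^*_i$ in subtle ways, and one must carefully select subsequences to ensure a well-defined limit $W_\infty$ and verify both the combinatorial inclusion $F_{p^*,01} \subseteq F^*_0$ and the containment $\mathrm{span}(\e_i : i \in F^*_0) \subseteq W_\infty$.
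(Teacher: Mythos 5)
Your proposal takes a genuinely different route from the paper. The paper proves transversality by a constructive induction over the terminal faces $\term_i(\interval^n)$ of the cube, producing at each stage a neighborhood $\mathcal{N}^i$ and a uniform time $T^i$, using transversality of $W$ to $\mathrm{span}(\beta_{F^-})$ near $F^-$, Lemma~\ref{L: flow to initial and terminal faces} to track where $W$ flows, and Lemma~\ref{L: jacobian ratios} to show the pushed-forward tangent plane of $W$ approaches $\mathrm{span}(\beta_F)$ and is therefore transverse to $T_yV$. You instead argue by contradiction, extracting a sequence $t_k \to \infty$ with witnessing points, passing to convergent subsequences $p_k\to p^*$, $q_k=\f_{-t_k}(p_k)\to q^*$, and reaching a contradiction via a Grassmannian limit $W_\infty$ of $T_{p_k}\f_{t_k}(W)$. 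Both approaches are built on the same analytic facts (the diagonal Jacobian from Corollary~\ref{C: pushforward of vectors along the flow} and the ratio estimate of Lemma~\ref{L: jacobian ratios}); the paper's trades the delicacy of subsequence rates for an explicit induction over faces, while yours is softer and potentially shorter if the Grassmannian step is carried out cleanly.

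That said, there are real gaps in your sketch. First, the claimed containment $\mathrm{span}(\e_i : i\in F^*_0)\subseteq W_\infty$ is too strong: for $i\in F^*_0\setminus F_{p^*,01}$ with $p^*_i=1$, one computes $\left(D_{q_k}\f_{t_k}\right)_{i,i}=e^{t_k}\big(1-p_{k,i}+p_{k,i}e^{-t_k}\big)^2$, whose growth depends entirely on the rate $p_{k,i}\to 1$; the constraint $q^*_i=0$ only forces $1-p_{k,i}\gg e^{-t_k}$, which is compatible with the entry remaining bounded or even tending to $0$. What your argument actually needs, and what does hold, is the weaker $\mathrm{span}(\e_i:i\in F_{p^*,01})\subseteq W_\infty$: for $i\in F_{p^*,01}$ we have $p^*_i<1$ so the $i$th Jacobian entry grows like $e^{t_k}(1-p^*_i)^2$, while for $j\in F^*_{01}$ the $j$th entry decays like $e^{-t_k}$, so the ratio tends to zero and the pushforward of a vector $\e_i+\sum_{j\in F^*_{01}}c_j\e_j\in T_{q^*}W$ (which exists by transversality of $W$ to $F^*$, assuming $q^*\in W$) normalizes to $\e_i$. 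You should restate your intermediate claim accordingly and then note it suffices, dropping the route through $F^*_0$. Second, the phrase ``generically $q^*$ turns out to be a vertex'' must be made precise: from the inverse flow formula, $p^*\in E^\circ$ forces $q^*=\underline{0}$ exactly, while $p^*\in\partial E$ gives no such conclusion, which is why the Grassmannian step is needed there. Third, the $|W|=0$ case is not addressed: your contradiction ``$q^*\notin W$'' fails when $W$ has full dimension and passes through $\underline{0}$; the fix (transversality of $W$ to the vertex forces $r_W$ to be a local diffeomorphism there, hence $\f_{t_k}(W)$ is automatically transverse to $V$ near $p_k$) parallels the paper's base-case use of the Inverse Function Theorem, but you should say it. Finally, your reduction of condition (ii) to ``an induction on the dimension of the face containing it'' is only gestured at; spelling it out would reveal it to be essentially a special case of the Grassmannian argument (disjointness is transversality when dimensions do not add up to $n$), so you may as well unify the two.
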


\begin{proof}
	Let $\dim(E) = n$, $\dim(W) = w$, and $\dim(V) = v$.
	Let us identify $E$ with $\interval^n$ and omit it from notation, so $\f_t(W)_E$ and $V_E$ become $\f_t(W)$ and $V$.
	It is sufficient to prove the statement for the top stratum of $W$ and $V$, as the argument will apply to any deeper strata.
	We will establish the following statement inducting over $i = 0, \dots, n$.
	\begin{itemize}	
		\item[($\ast$)] There exists a neighborhood $\mathcal N^i$ of $\term_{i}(\interval^n)$ and a $T^i \in \R$ such that $\f_t(W)$ is transverse to $V$ within $\mathcal N^i$ for all $t > T^i$.	
	\end{itemize}
	Since $\interval^n = \term_{n}(\interval^n)$, this will suffice.
	
	For the base case of the induction we consider $\term_0(\interval^n) = \{\underline{1}\}$.
	If $\dim(V) < n$ then, by the assumption that $V$ is transverse to $\interval^n$, there is a neighborhood of $\underline{1}$ that does not intersect $V$.	
	In this case ($\ast$) is fulfilled vacuously.	
	If $\dim(V) = n$, then again the condition is fulfilled vacuously if $V$ does not contain $\underline{1}$, so we assume $\underline{1} \in V$.	
	Because of transversality, the Inverse Function Theorem implies that $V \to \interval^n$ is a local diffeomorphism onto a neighborhood $\mathcal N^0$ of $\underline{1}$ and, therefore, it is transverse to any map therein, so we can take $T^0 = 0$.
	
	Let us establish now the induction step.
	Consider $F \in \term_{i}(\interval^n)$ and notice that the union of $\beta_F = \{\e_i\ |\ i \in F_{01}\}$ and $\beta_{F^-} = \{\e_i\ |\ i \in F^-_{01}\}$ trivialize the tangent bundle of $\interval^n$.
	
	Choose $\delta > 0$ small enough so that the closed $L^\infty$ neighborhood $\overline N_\delta(F)$ of $F$ is such that for any $y \in V \cap \overline N_\delta(F)$ the affine space $T_yV$ is transverse to the span of $\beta_F = \{\e_i\ |\ i \in F_{01}\}$ at $y$.
	
	Similarly, consider a $\zeta > 0$ small enough so that the closed $L^\infty$ neighborhood $\overline N_\zeta(F^-)$ of $F^-$ is such that either $W \cap \overline N_\zeta(F^-) = \emptyset$ or for any $x \in W \cap \overline N_\zeta(F^-)$ the affine space $T_xW$ is transverse to the span of $\beta_{F^-} = \{\e_i\ |\ i \in F^-_{01}\}$ at $x$.
	In the latter case, for every such $x$, the transversality implies that $T_xW$ projects surjectively onto $F^-$; in other words, the projection of $W \cap \overline N_\zeta(F^-)$ to $F^-$ is a submersion onto its image. Therefore, we may choose continuously with $x$ a subset of $T_xW$ of the form $\beta_x = \{\e_j + v_j\ |\ j \in F_{01}\}$ such that $v_j$ is in the span of $\beta_{F^-}$.
	As $W \cap \overline N_\zeta(F^-)$ is compact, the $v_j$ have bounded norm.
	
	By possibly making $\delta$ smaller we can choose $u \in (0, 1)$ such that $\overline N_\delta(F) \setminus \overline N_\delta L_u(F)$ is contained in $\mathcal N^{i-1}$.
	
	By Lemma \ref{L: flow to initial and terminal faces} we may choose $t > T^{i-1}$ sufficiently large so that all points in $\f_t(W) \cap V \cap \overline N_\delta L_u(F)$ are of the form $y = \f_t(x)$ for some $x \in W \cap \overline N_\zeta(F^-)$.
	
	We use Lemma~\ref{L: jacobian ratios} to deduce that the push forward of the span of $\beta_x$ along $D\,\f_t$ is as close as desired to the span of $\beta_F$ at $y$ and is therefore transverse to $T_y V$.
	The induction step is completed by taking $\mathcal N^i$ to be the union over all terminal faces of $N_\delta(F)$ and $T^i$ the maximum value of their associated $t$.
\end{proof}

If $M$ is closed, then any cubulation of $M$ is by a finite number of cubes.
Applying this theorem for each top-dimensional cube yields that if $W$ and $V$ are in $\cman_\pf(M)$ then both $\f_t(W)$ and $\f_{-t}(W)$ are transverse to $V$ for $t$ sufficiently large.
We have the following consequence of this theorem related to Theorem~\ref{T: main theorem}.

\begin{corollary} \label{C: transversality}
	Let $M$ be a cubulated closed manifold.
	If $W$ and $V$ are in $\cman_\pf(M)$ then for $t$ sufficiently large  $\f_t(W)$ and $\f_{-t}(V)$, as well $\f_{-t}(W)$ and $\f_t(V)$, are transverse over $M$.
\end{corollary}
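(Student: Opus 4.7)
The plan is to reduce both transversality assertions to Theorem~\ref{T: transversality} by exploiting the fact that, for every $t \in \R$, the logistic flow $\f_t$ defines a diffeomorphism of $M$. This diffeomorphism property follows from Lemma~\ref{L: bernoulli equation} together with Lemma~\ref{L: f is natural}: the logistic flow on each standard cube is a diffeomorphism that fixes the boundary, so the per-cube flows glue into a globally smooth diffeomorphism of $M$. Since transversality of reference maps to $M$ is preserved under postcomposition with a diffeomorphism of $M$, we have the key equivalence
\begin{equation*}
\f_t(W) \pf \f_{-t}(V) \quad \Longleftrightarrow \quad \f_{2t}(W) \pf V,
\end{equation*}
obtained by applying $\f_t$ to both sides. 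Thus the first claim is reduced to proving global transversality of $\f_{2t}(W)$ and $V$ over $M$ for $t$ large.

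For this reduction I would use Theorem~\ref{T: transversality} one cube at a time. Because $M$ is closed, the cubulation $|X|\to M$ has finitely many top-dimensional cubes $E_1,\dots,E_N$. Applied to each $E_k$ (with $2t$ in place of $t$), Theorem~\ref{T: transversality} produces a threshold $T_k$ so that $\f_{2t}(W)_{E_k}$ and $V_{E_k}$ are transverse over $E_k$ whenever $2t > T_k$. Setting $T = \tfrac12 \max_k T_k$, for all $t > T$ this transversality holds simultaneously on every top-dimensional cube. Since every point $p \in M$ is contained in the closure of some top-dimensional cube $E_k$, and $T_p E_k = T_p M$ for such $E_k$, the per-cube transversality of the pull-backs $\f_{2t}(W)_{E_k}$ and $V_{E_k}$ at $p$ records precisely the stratified transversality of $\f_{2t}(W)$ and $V$ as c-manifolds over $M$ at $p$. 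Unwinding the equivalence from the previous paragraph yields the first claim.

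The second assertion is handled by the same argument with the roles of forward and backward flow exchanged. Applying the diffeomorphism $\f_{-t}$ gives
\begin{equation*}
\f_{-t}(W) \pf \f_t(V) \quad \Longleftrightarrow \quad \f_{-2t}(W) \pf V,
\end{equation*}
and the backward-flow portion of Theorem~\ref{T: transversality}, applied per top-dimensional cube and uniformized by taking a maximum over the finitely many cubes, yields the conclusion.

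No step of this argument presents a serious obstacle; the only point requiring care is the passage from the per-cube transversality furnished by Theorem~\ref{T: transversality} to global transversality over $M$. This is the reason for restricting attention to the top-dimensional cubes of $X$, where the equality $T_p E = T_p M$ at interior and boundary points of $E$ ensures that transversality of the pull-backs to $E$ coincides with transversality of the reference maps to $M$ in a neighborhood of every point of $E$. Compactness of $M$ then converts the cube-by-cube thresholds into a single uniform bound on $t$.
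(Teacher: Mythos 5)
Your proposal is correct and follows the same route as the paper: you apply the flow diffeomorphism $\f_{\pm t}$ to reduce to the one-sided statement $\f_{2t}(W) \pf V$, then invoke Theorem~\ref{T: transversality} over the finitely many top-dimensional cubes of the cubulation (the paper dispatches this cube-to-global step in the paragraph preceding the corollary), taking the maximum of the finitely many thresholds. The extra detail you supply — that per-cube transversality over top-dimensional cubes, where $T_pE = T_pM$, is exactly transversality over $M$ — is a correct unwinding of what the paper states more tersely.
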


\begin{proof}
By Theorem \ref{T: transversality} there is a $t$ large enough so that $\f_{2t}(W)$ and $V$ are transverse for all larger $t$. Now apply $\f_{-t}$ to both terms and that transversality is preserved by composition with a diffeomorphism. Similarly, we can apply $\f_t$ to $\f_{-2t}(W)$ and $V$ for large enough $t$. Now choose $t$ large enough to do both. 
\end{proof}

\subsection{Locality}

On a cubulated manifold the logistic vector field is compatibly defined across cubes by Lemma~\ref{L: f is natural}.
In this subsection we show that we can analyze the pull-back product over $M$ of cochains $\f_t(W)$ and $V$ locally -- that is, cube-by-cube.

\begin{lemma} \label{L: little trans lemma}
	Let $V$ and $W$ be c-manifolds over $M$ and $S$ a submanifold without boundary of $M$.
	Suppose $V$, $W$, and $S$ are pairwise transverse. 
	If $V_S$ and $W_S$ are transverse over $S$ then $W \times_M V$ is transverse to $S$ over $M$. Moreover,	$W_S \times_S V_S \cong (W \times_M V) \times_M S$.
\end{lemma}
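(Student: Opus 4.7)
The plan is to split the statement into its two parts: first the transversality of $W \times_M V$ with $S$ over $M$, then the diffeomorphism of iterated fiber products.

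For the transversality claim, my approach is to reduce to a pointwise linear-algebra computation inside a single tangent space. Fix a point $(w,v) \in W \times_M V$ with image $m = r_W(w) = r_V(v) \in S$, and write $A = Dr_W(T_w W)$, $B = Dr_V(T_v V)$, $C = T_m S$, all as subspaces of $T_m M$. Then $A \cap B$ is exactly the image of $T_{(w,v)}(W \times_M V)$ under its reference map to $M$, while $A \cap C$ and $B \cap C$ are the images of the tangent spaces of $W_S$ at $(w,m)$ and of $V_S$ at $(v,m)$ under their reference maps to $S$. The pairwise transversality hypotheses over $M$ become $A + B = A + C = B + C = T_m M$, and the transversality of $V_S$ and $W_S$ over $S$ becomes $(A \cap C) + (B \cap C) = C$. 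What I need to prove is $(A \cap B) + C = T_m M$. A routine inclusion–exclusion using the three pairwise transversality identities expresses $\dim(A\cap B)$, $\dim(A\cap C)$, and $\dim(B\cap C)$, and then the $S$-level transversality pins down $\dim(A \cap B \cap C) = \dim A + \dim B + \dim C - 2\dim T_m M$. Substituting back gives $\dim((A\cap B) + C) = \dim T_m M$, which is the desired transversality. By Theorem~\ref{pullback}, the corner strata of $W \times_M V$ come from $S^k(W) \times_M S^\ell(V)$, and since $S$ has no boundary the same argument applies verbatim on each stratum.

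For the diffeomorphism, my plan is to identify both sides as the same subspace of $W \times V$ and then verify that the two smooth-structure constructions agree. Set-theoretically both $W_S \times_S V_S$ and $(W \times_M V) \times_M S$ are in natural bijection with $\{(w,v) \in W \times V \mid r_W(w) = r_V(v) \in S\}$, and the first part of the lemma together with the standing transversality hypotheses ensures every fiber product that appears in either construction is a c-manifold with corners in the sense of Theorem~\ref{pullback}. The two resulting manifold structures agree because they are cut out by the same local defining equations inside the ambient product $W\times V$, or equivalently because both sides satisfy the universal property of the triple fiber product of $W$, $V$, and $S$ over $M$.

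The main obstacle I anticipate is the corner bookkeeping: Joyce's definition of transversality for weakly smooth maps between c-manifolds is stratified, so iterating fiber products with corners could a priori complicate the picture. The key simplification is that $S$ has no boundary, so every stratum of each of the iterated fiber products comes from strata of $W$ and $V$ alone, and on each such stratum the open-manifold linear-algebra argument above goes through unchanged.
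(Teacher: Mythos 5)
Your proof is correct and closely parallels the paper's, but the transversality step is carried out by a different computation, so the comparison is worth a word. The paper's argument observes that both relevant fiber products of tangent spaces — that of $T_{(x,z)}W_S$ and $T_{(y,z)}V_S$ over $T_zS$, and that of $T_{(x,y)}(W\times_M V)$ and $T_zS$ over $T_zM$ — are literally the same subspace of $T_xW\times T_yV\times T_zS$; it then invokes the fact that transversality of a pair of maps at a point is equivalent to the fiber product of tangent spaces having codimension equal to the ambient dimension, so that the hypothesized transversality of $W_S$ and $V_S$ forces the right codimension for the other pair. You instead push everything into $T_mM$ as the three images $A$, $B$, $C$ and run an explicit inclusion--exclusion to establish $(A\cap B)+C = T_mM$. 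Both routes rest on the same linear-algebra dichotomy (transversality $\iff$ expected codimension) and on the same identification of the triple fiber product; yours is a hands-on dimension count, while the paper's is a one-line comparison of codimensions once the common subspace is identified. Your second part, identifying both iterated fiber products as the same subset of $W\times V\times S$ and appealing to the universal property (or to common local defining equations), matches the paper's argument essentially verbatim. The stratification remark at the end is the right thing to say about corners, and the paper handles it the same way.
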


We will apply this when $S$ is the interior of a cube in a cubical structure.

\begin{proof}		
	The last statement of the lemma follows from identifying both $W_S \times_S V_S$ and $ (W \times_M V) \times_M S$ with
	the subspace of triples of $(x,y,z) \in W \times V \times S$ such that $r_W(x) = r_V(y) = r_S(z)$. 
	
	This observation at the level of tangent spaces	also gives rise to the first statement, recalling that the tangent bundle of the fiber product is the fiber product of the tangent bundles.
	Indeed, transversality of two maps at a point where they coincide is defined locally by surjectivity of the map $(\vec a,\vec b)\to D_xr_W(\vec a)-D_yr_V(\vec b)$ from the direct sum of tangent spaces of the domain points to that of the ambient manifold. By exactness, this surjectivity is equivalent to the kernel having the appropriate dimension, but the kernel is precisely the tangent space of the	fiber product.
	Thus, it follows from a short computation that two maps are transverse if and only if the the fiber-product of tangent spaces over any point has codimension equal to the sum of the codimensions of the tangent spaces.
	As in the ``global" case, the fiber product of $T_{(x,z)} V_S$ and $T_{(y,z)} W_S$ over $T_zS$ coincides with that of $T_{(x,y)} (W \times_M V)$ and $T_z S$ over $T_z M$, as both are identified with the same subspace of $T_x W \times T_y V \times T_z S$. But the first pull-back is transverse by assumption, so this subspace has codimension equal to the sum of the codimensions of $S$, $V$, and $W$ in $M$.
	This codimension is also what is required for the second pull-back to be transverse, yielding the result.
\end{proof}

\begin{proposition} \label{P: locality}
	Let $M$ be a cubulated closed manifold.
	Suppose $W$ and $V$ are in $\cman_\pf(M)$.
	For $t$ sufficiently large, the pull-back $\f_t(W) \times_M V$ is transverse to the cubulation, and 
	\begin{equation*}
	\f_t(W) \times_M V = \bigcup_{E \in X} {\f_t(W_E)} \times_E V_E
	\end{equation*}
	as spaces, with each ${\f_t(W_E)} \times_E V_E$ a manifold with corners.
\end{proposition}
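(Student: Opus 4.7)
The plan is to combine Theorem~\ref{T: transversality}, which controls transversality cube by cube, with the stratification of $M$ by the open cells of the cubulation, using Lemma~\ref{L: little trans lemma} to glue the local transversality data into global statements about $\f_t(W) \times_M V$.

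Since $M$ is closed, the cubulation $X$ has only finitely many cubes, so Theorem~\ref{T: transversality} produces a single threshold $t_0$ such that for all $t > t_0$ and every $E \in X$, the pullbacks $\f_t(W)_E$ and $V_E$ are transverse over $E$. I fix such a $t$ for the rest of the argument. The open cells $|E|^\circ = \iota_E(S^0(E))$ partition $M$ into disjoint submanifolds without boundary, and by Lemma~\ref{L: bernoulli equation} together with Lemma~\ref{L: f is natural} the logistic flow preserves this stratification: coordinates equal to $0$ or $1$ stay fixed while coordinates in $(0,1)$ remain in $(0,1)$, so $\f_t$ restricts to a diffeomorphism of every open cell.

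For each cell, I verify the hypotheses of Lemma~\ref{L: little trans lemma} with $S = |E|^\circ$. Pairwise transversality of $\f_t(W)$, $V$, and $|E|^\circ$ follows from $W, V \in \cman_\pf(M)$ together with the fact that $\f_t$ preserves the cubulation, while transversality of $\f_t(W)_{|E|^\circ}$ with $V_{|E|^\circ}$ over $|E|^\circ$ is the restriction to the top corner stratum of the cubewise transversality produced by Theorem~\ref{T: transversality}. The lemma then yields two conclusions: first, $\f_t(W) \times_M V$ is transverse to $|E|^\circ$ as a submanifold of $M$; second, there is a natural identification
\begin{equation*}
\f_t(W)_{|E|^\circ} \times_{|E|^\circ} V_{|E|^\circ} \; \cong \; \big(\f_t(W) \times_M V\big) \times_M |E|^\circ.
\end{equation*}

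Applied as $E$ ranges over $X$, the first conclusion gives transversality of $\f_t(W) \times_M V$ to every characteristic map in the c-manifold sense, since each corner stratum $S^k(E)$ is a union of open cells of $X$. For the decomposition, the second conclusion combined with the partition $M = \bigsqcup_E |E|^\circ$ gives $\f_t(W) \times_M V = \bigsqcup_E \f_t(W)_{|E|^\circ} \times_{|E|^\circ} V_{|E|^\circ}$, and regrouping these open strata by closed cubes produces the stated identity $\f_t(W) \times_M V = \bigcup_E \f_t(W_E) \times_E V_E$, where the overlaps occur along boundary faces. The c-manifold structure on each piece $\f_t(W_E) \times_E V_E$ is inherited from the transverse fiber product over $E$, via Joyce's pullback theorem over manifolds with corners~\cite{Joy12}.

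The main subtlety I expect is bookkeeping at the corners: Theorem~\ref{pullback} in the excerpt handles fiber products only over manifolds without boundary, so to obtain the c-manifold structure on each $\f_t(W_E) \times_E V_E$ one must either invoke Joyce's more general framework or assemble the strata-wise descriptions delivered by Lemma~\ref{L: little trans lemma} on each open face of $E$ and verify that they fit together into a single manifold with corners. Once this is handled, the pieces of the proof combine cleanly into the statement.
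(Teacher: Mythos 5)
Your argument follows the paper's strategy closely: Theorem~\ref{T: transversality} cube by cube (with a uniform $t$ by finiteness), Lemma~\ref{L: little trans lemma} applied with $S$ the interior of each cube to obtain transversality of $\f_t(W)\times_M V$ to the cubulation, and the tautological decomposition (the paper states $\f_t(W)\times_M V = \bigcup_E \f_t(W)_E \times_E V_E$ directly rather than partitioning by open cells and regrouping, but this is equivalent). The one place you leave a gap is exactly where you flag the ``main subtlety'': neither invoking Joyce's pullback theorem over manifolds with corners nor assembling strata-wise descriptions is what the paper does, and both are heavier than necessary. The paper instead observes that $\f_t(W_E)\times_E V_E$ is \emph{identified} (set-theoretically, as the same subspace of $W\times V\times E$) with $(\f_t(W)\times_M V)\times_M E$, and this latter is a fiber product over the boundaryless manifold $M$ of two transverse c-manifolds over $M$ (transversality of $\f_t(W)\times_M V$ to $E$ being precisely what was just established), so Theorem~\ref{pullback} applies directly and gives the manifold-with-corners structure. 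This sidesteps entirely the need for a fiber-product theorem over a base with boundary.
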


\begin{proof}
	By Lemma~\ref{l:flow preserves transversality}, the c-manifold $\f_t(W)$ remains transverse to the cubulation for every $t$.
	By Theorem~\ref{T: transversality}, over each $E$ in the cubulation, $\f_t(W)_E$ is transverse to $V_E$ for $t$ sufficiently large. Taking the largest
	such $t$, we can then apply Lemma~\ref{L: little trans lemma} to obtain transversality of the pull-back of $\f_t(W)$ and $V$ to the entire cubulation. 
	
	Tautologically, $ \f_t(W) \times_M V = \bigcup_{E \in X} {\f_t(W)}_E \times_E V_E$.
	But again using that the logistic flow respects the cubulation, ${\f_t(W)}_E = {\f_t(W_E)}$, which gives the decomposition.
	That this is itself a manifold with corners follows from the identification by Lemma \ref{L: little trans lemma} of $ {\f_t(W_E)} \times_E V_E$ with $(\f_t(W) \times_M V)\times_M E$, the latter being a manifold with corners by our transversality result.
\end{proof}

For the following lemma, recall Definition \ref{D: intersection number}.

\begin{lemma} \label{L: intersection signs}
	Let $W$ and $V$ be co-oriented c-manifolds over a closed manifold $M$ that are transverse to a cubulation $|X| \to M$, and let $E$ be a cube whose dimension
	is the sum of the codimensions of $W$ and $V$ in $M$. Then, for sufficiently large $t$, $\f_t(W_E)$ and $V_E$ are complementary in $E$ and the value of $\cI(\f_t(W) \times_M V)$ on $E$
	is equal to $I_E(\f_t(W_E), V_E)$.
\end{lemma}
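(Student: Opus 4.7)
The plan is to prove the lemma in three steps: (i) verify complementarity of $\f_t(W_E)$ and $V_E$ in $E$, (ii) identify the underlying sets counted by the two intersection numbers, and (iii) match the signs pointwise. For (i), the codimensions add to $|W|+|V|=\dim E$ by hypothesis, and transversality on the top stratum $S^0(E)$ is immediate from Theorem~\ref{T: transversality} applied to $E$ for $t$ sufficiently large. For disjointness on a positive-codimension stratum --- the interior of a proper face $F$ of $E$ --- naturality of the logistic flow (Lemma~\ref{L: f is natural}) lets me restrict to $\f_t(W_F)$ and apply Theorem~\ref{T: transversality} to $F$, producing transversality of $\f_t(W_F)$ and $V_F$ inside $F$. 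Their expected intersection has dimension $\dim F - |W| - |V| = \dim F - \dim E < 0$, so the intersection is empty; a single $t$ suffices over the finitely many faces of $E$.

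For (ii), Lemma~\ref{L: little trans lemma} applied with $S$ taken to be the interior of $E$, combined with the observation that the logistic flow preserves $E$, produces a natural bijection $\f_t(W_E)\times_E V_E \cong (\f_t(W)\times_M V)\times_M E$, so both sides of the claimed equality count the same finite discrete set of points in $E^\circ$.

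The substantive step is (iii). At a point $p$ of this intersection, transversality of $\f_t(W)\times_M V$ with $E$ identifies $T_pE$ with $\nu_{\f_t(W)\times_M V \subset M}$, on which the product co-orientation is, by Theorem~\ref{T: pull-back co-or}(4), the wedge of the co-orientations of $\f_t(W)$ and $V$ in that order. On the other hand, the transversality splitting $T_pE = T_p\f_t(W_E)\oplus T_pV_E$, together with the relation $\beta_E = \beta_{V_E}\wedge \omega_V$ defining the induced orientation of $V_E$, lets me rewrite this wedge in terms adapted to $E$. A direct computation then shows that the resulting orientation of $T_pE$ differs from the canonical $\beta_E$ by precisely the sign comparing the normal co-orientation of $\f_t(W_E)$ in $E$ with the orientation of $V_E$ under the identification $\nu_{\f_t(W_E)\subset E}\cong T_pV_E$; this is exactly the local sign assigned by $I_E$, so the contributions to $I_M$ and $I_E$ at $p$ agree.

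I expect the sign bookkeeping in (iii) to be the main obstacle. It requires carefully reconciling three pieces of data: the product co-orientation convention of Theorem~\ref{T: pull-back co-or}(4), the transfer of a co-orientation of $\f_t(W)$ over $M$ to one of $\f_t(W_E)$ over $E$ via the inclusion $T_pE \hookrightarrow T_pM$, and the induced orientation of $V_E$ from the canonical orientation of the cube. Nothing beyond multilinear algebra is needed, but each identification must be pinned down consistently with the paper's conventions.
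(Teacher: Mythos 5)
Your proposal is correct and follows essentially the same route as the paper: identify the two finite sets via the fiber-product structure (you invoke Lemma~\ref{L: little trans lemma}, the paper identifies the sets directly after Proposition~\ref{P: locality}), then match signs using Theorem~\ref{T: pull-back co-or}(4) and the observation that the normal co-orientation of $\f_t(W_E)$ in $E$ is the restriction of that of $\f_t(W)$ in $M$, so that both $I_E(\f_t(W_E), V_E)$ and $\cI(\f_t(W)\times_M V)(E)$ reduce to comparing $\beta_{\f_t(W)}\wedge\beta_V$ against the canonical orientation of $E$. Your step (i) is more explicit about the complementarity condition than the paper (which delegates to Proposition~\ref{P: locality}), while your step (iii) is left as a sketch with a small notational slip (writing $\beta_E=\beta_{V_E}\wedge\omega_V$ where $\omega_V$ should be the normal orientation $\beta_V$ induced by the co-orientation, not the co-orientation itself), but the structure and key inputs coincide and the sign claim checks out.
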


\begin{proof}
	We assume that $t$ is as large as needed for Proposition~\ref{P: locality}.
	In this case, both $(r_{\f_t(W) \times_M V})^{-1}(E)$ and $\rm{Int}_E(\f_t(W_E), V_E)$ consist of points $(x,y,z)\in W \times V \times E$ with $\f_t \circ r_W (x)= r_V (y)= \iota_E(z)$. 
	We check that this is an isomorphism of signed sets.
		
	Consider one such intersection point. We first compute its contribution to $I_E(\f_t(W_E), V_E)$. As $\f_t(W)_E$ and $V_E$ are transverse and of complementary dimension in $E$, these spaces are embedded in $E$ near the intersection point.
	Moreover, $\f_t W$ and $V$ are embedded in $M$ near such a point, since a nontrivial $\vec v$ in the kernel of the derivative of $r_{\f_t (W)}$ would imply $(\vec v,0)\in TW\times_{TM} TE$, and it maps to $0$ in $T_M$, producing a nontrivial kernel for the derivative of $r_{\f_t(W_E)}=r_{\f_t(W)_E}$. Similarly for $V$ and $V_E$.
	The codimensions of $\f_t (W_E)$ and $V_E$ in $E$ agree with the codimensions of $W$ and $V$ in $M$, and their normal bundles in $E$ are the restriction of the normal bundles of $\f_t(W)$ and $V$ in $M$ where they are embedded.
	Thus we can use normal co-orientations and identify the normal co-orientation $\beta_{\f_t(W)}$ of $\f_t(W)_E$ in $E$ with the normal co-orientation of $\f_t(W)$ in $M$ and similarly for $V_E$ and $V$.
	Then the sign of the intersection is $1$ if $\beta_W \wedge \beta_V$ agrees with the orientation of $E$ and is $-1$ otherwise.
	 
	Now consider the pullback $(r_{\f_t(W) \times_M V})^{-1}(E)$. 
	As $\f_t( W)$ and $V$ are embedded near $x$ and $y$, we co-orient $\f_t(W) \times_M V = \f_t(W)\cap V$ at $z$ again by $\beta_{\f_t(W)} \wedge \beta_V$ according with the properties of co-orientations of fiber products of embeddings; see Theorem \ref{T: pull-back co-or}.
	But now again the contribution to $\cI(\f_t(W) \times_M V)$ is $+1$ or $-1$ as this pair agrees or not with the orientation of $E$.
\end{proof}

\begin{corollary} \label{C: intersection signs}
	With the assumptions of Lemma \ref{L: intersection signs}, for sufficiently large $t$, $\f_t(W_E)$ and $\f_{-t}(V_E)$ are complementary in $E$ and the value of $\cI(\f_t(W) \times_M \f_{-t}(V))$ on $E$ is equal to $I_E(\f_t(W_E), \f_{-t}(V_E))$.
\end{corollary}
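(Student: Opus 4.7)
The corollary is precisely the analog of Lemma~\ref{L: intersection signs} with the fixed cochain $V$ replaced by the $t$-dependent cochain $\f_{-t}(V)$, so the natural plan is to re-run the proof of that lemma, checking that each step survives the additional flow. Since $\f_{-t}$ restricts to an orientation-preserving diffeomorphism $E \to E$ (by Lemma~\ref{L: bernoulli equation} and the naturality in Lemma~\ref{L: f is natural}), the restriction $\f_{-t}(V)_E$ equals $\f_{-t}(V_E)$, so the object appearing in the claimed identity does make sense cube-by-cube.

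First I would establish the required transversality. Corollary~\ref{C: transversality} already gives transversality of $\f_t(W)$ and $\f_{-t}(V)$ over $M$ for large $t$. For the cube-level statement, I would apply Theorem~\ref{T: transversality} to the pair $W, V$ to obtain a $T_E$ such that $\f_{2s}(W)_E$ is transverse to $V_E$ in $E$ for $s > T_E$, then push forward by the cube-preserving diffeomorphism $\f_{-s}$ to conclude that $\f_s(W)_E = \f_s(W_E)$ is transverse to $\f_{-s}(V_E) = \f_{-s}(V)_E$. Since $M$ is compact, there are finitely many cubes, so a single $t$ works for all of them simultaneously. Lemma~\ref{L: little trans lemma} then upgrades this to transversality of $\f_t(W)\times_M \f_{-t}(V)$ to the stratum $S = \mathrm{int}(E)$ of the cubulation, and gives the identification $(\f_t(W)\times_M \f_{-t}(V))\times_M E \cong \f_t(W_E)\times_E \f_{-t}(V_E)$. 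Complementarity of dimensions inside $E$ follows from the hypothesis $\dim(E)=|W|+|V|$.

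Next, I would verify the equality of signed counts by imitating the three computations in the proof of Lemma~\ref{L: intersection signs}: (i) both $(r_{\f_t(W)\times_M \f_{-t}(V)})^{-1}(E)$ and $\mathrm{Int}_E(\f_t(W_E), \f_{-t}(V_E))$ are naturally identified with the set of triples $(x,y,z) \in W \times V \times E$ satisfying $\f_t(r_W(x)) = \f_{-t}(r_V(y)) = \iota_E(z)$; (ii) near any such triple both $\f_t(W)$ and $\f_{-t}(V)$ are embedded in $M$ and the cubewise restrictions $\f_t(W_E)$ and $\f_{-t}(V_E)$ are embedded in $E$ with normal bundles equal to the restrictions of the ambient normal bundles, since $\f_t$ and $\f_{-t}$ are diffeomorphisms preserving the cubes; (iii) the product co-orientation of Theorem~\ref{T: pull-back co-or} at the intersection, namely $\beta_{\f_t(W)}\wedge \beta_{\f_{-t}(V)}$, is compared to the orientation of $E$, which gives both the sign of the pullback contribution to $\cI(\f_t(W)\times_M \f_{-t}(V))(E)$ and the sign of the intersection contribution to $I_E(\f_t(W_E), \f_{-t}(V_E))$.

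The only genuine issue, compared with the original lemma, is bookkeeping: both factors depend on $t$, so I need the $t$ produced by Theorem~\ref{T: transversality} and Proposition~\ref{P: locality} to be a bound on a single parameter governing both flows. I expect this to be the main obstacle in writing it down cleanly, and the trick of applying the theorem to the pair $(W,V)$ with the doubled time $2s$ and then transferring through the diffeomorphism $\f_{-s}$ of $E$ is what makes it work with a single uniform threshold.
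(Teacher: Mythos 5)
Your proposal is correct and identifies the central trick — obtaining a statement at time $2t$ and transferring it by the cube-preserving diffeomorphism $\f_{-t}$ — but you apply it only to the transversality step and then reprove the signed-count equality from scratch by rerunning the internals of Lemma~\ref{L: intersection signs}. The paper's proof is shorter: it applies Lemma~\ref{L: intersection signs} as a black box to $W$ and $V$ with time $2t$, obtaining complementarity of $\f_{2t}(W_E)$ and $V_E$ together with $\cI(\f_{2t}(W)\times_M V)(E)=I_E(\f_{2t}(W_E),V_E)$, and then pushes the \emph{entire} conclusion forward by $\f_{-t}$. Since $\f_{-t}$ is a diffeomorphism of $M$ restricting to a diffeomorphism of $E$ that preserves orientations and co-orientations, all the relevant data (complementarity, fiber products, signed counts) transform as needed, yielding the corollary in three lines. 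Your rederivation of steps (i)--(iii) isn't wrong, but it duplicates work that is already packaged in the lemma you're citing; the cleaner move is to let the diffeomorphism carry the whole statement, not just the transversality.
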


\begin{proof}
	By the preceding lemma, there is a $t$ large enough that $\f_{2t}(W_E)$ and $V_E$ are complementary in $E$ and the value of $\cI(\f_{2t}(W) \times_M V)$ on $E$ is equal to $I_E(\f_{2t}(W_E), V_E)$.
	Now apply the diffeomorphism $\f_{-t}$ to $\f_{2t}(W_E)$ and $V_E$.
	As $\f_{-t}$ preserves orientations and co-orientations, the corollary follows.
\end{proof}

\subsection{Graph-like neighborhoods}

We now focus on the local structure of a transverse c-manifold over $\interval^n$ around intersection points with faces of complementary dimension.
The Implicit Function Theorem guarantees that the following subspaces occur naturally in this setting.

\begin{figure}[!h]
	\centering
	\begin{subfigure}{.32\textwidth}
		\includegraphics[scale=.7]{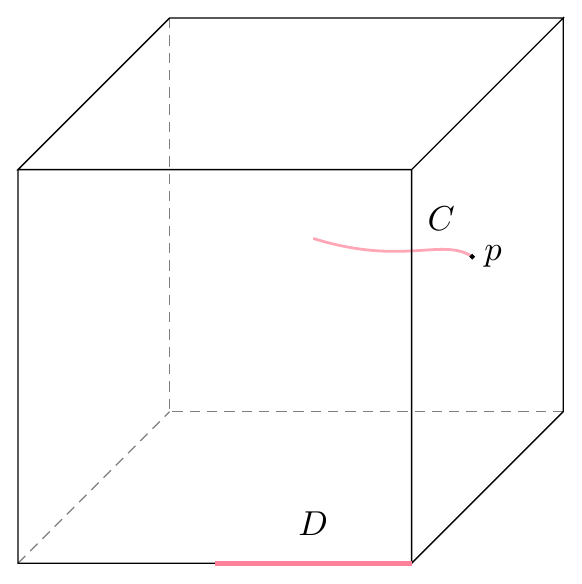}
		\hfill
	\end{subfigure}
	\begin{subfigure}{.32\textwidth}
		\hfill
		\includegraphics[scale=.7]{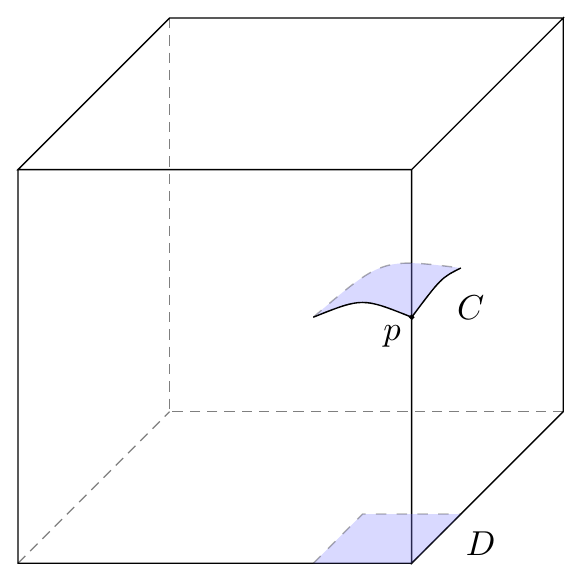}
	\end{subfigure}
	\caption{Examples of graph-like neighborhoods with $p$ in the interior of faces $F=(\emptyset, \{2,3\}, \{1\})$ and $F=(\{2\}, \{3\}, \{1\})$, respectively. In the diagram on the right, $F^-\times F^+$ is difficult to depict so we draw the domain $D$ in the bottom face of the cube, which is also complementary to $F$. }
	\label{F: graph like neighborhood}
\end{figure}

\begin{definition} \label{D: graph-like}
	Let $F$ be a face of $\interval^n$ and $p$ a point in its interior.
	Consider the $F$-decomposition $\interval^n \cong F^- \times F \times F^+$ and its canonical projections $\pi_F^{\perp}$ to $F^- \times F^+$ and $\pi_F$ to $F$.
	A set $C$ is said to be a \textbf{graph-like neighborhood} of $p$ if 
	\begin{enumerate}
		\item $C$ is the graph of a smooth map $s: D \to F$, where $D$ is an open neighborhood of $\pi_F^\perp(p)$ in $F^- \times F^+$ such that $D$ intersects only the faces of $F^- \times F^+$ containing $\pi_F^\perp(p)$ and is bounded away from the other faces of $F^- \times F^+$,
		
		\item $s(\pi_F^\perp(p)) = p$,
		
		\item $C$ is bounded away from the faces of $F$ that do not contain $p$, and
		
		\item $C$ is transverse to all faces of $\interval^n$.
	\end{enumerate}
\end{definition}

In general $C$ is not actually a neighborhood of $p$ in $\interval^n$ as $\dim(C)=n-\dim F$, with an extreme case being $F =\interval^n$ in which case $C$ is just a point in the interior of $F$.
For any graph-like neighborhood $C$ of a point in the interior of a face $F$, there exists $r, u \in (0,1)$ such that $C \subseteq N_rL_u(F)$ and $C \subseteq N_rU_u(F)$.

Since the logistic flow can be expressed independently in each coordinate, it preserves the property of being a graph-like neighborhood.

Let $W$ be a c-manifold over $\interval^n$.
Assume that for any $(n-\dim W)$-face $F$ of $\interval^n$, the set $W \cap F$ consists of a finite number of points near which $W$ is locally embedded.
By the Implicit Function Theorem, the discrete set $W \times_{\interval^n} F$ can be used to parameterize a collection of graph-like neighborhoods of the points in $W \cap F$.
Recall the definition of reciprocal faces of $\interval^n$, Definition~\ref{D: reciprocal}.

\begin{lemma} \label{L: flow intersection for graph-like nbhds}
	Let $F$ and $F^\prime$ be faces of $\interval^n$ of complementary dimension and let $C$ and $C^\prime$ be graph-like neighborhoods of points in their respective interiors.
	If the pair $F$ and $F^\prime$ is reciprocal, then for $t$ sufficiently large $\f_t(C) \cap \f_{-t}(C^\prime)$ is a single point.
	If the pair is not reciprocal then for $t$ sufficiently large this intersection is empty.
\end{lemma}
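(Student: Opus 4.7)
The plan is to combine Lemma~\ref{L: flow to initial and terminal faces}, which positions flowed graph-like neighborhoods near $F^+$ and $(F')^-$, with Lemma~\ref{L: jacobian ratios}, which controls tangent directions after the flow, treating the reciprocal and non-reciprocal cases separately. Since a graph-like neighborhood $C$ is contained in both an upper neighborhood $N_r U_u(F)$ and a lower neighborhood $N_r L_u(F)$ for some $r, u \in (0,1)$, and similarly for $C'$, Lemma~\ref{L: flow to initial and terminal faces} gives that for any $\epsilon > 0$, we have $\f_t(C) \subseteq N_\epsilon(F^+)$ and $\f_{-t}(C') \subseteq N_\epsilon((F')^-)$ for all sufficiently large $t$.

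For the non-reciprocal case, the plan is to establish the purely combinatorial fact that for faces $F$ and $F'$ of complementary dimension, $F^+ \cap (F')^- \neq \emptyset$ if and only if $(F, F')$ is reciprocal. A point lies in $F^+ \cap (F')^-$ precisely when coordinates in $F_1 \cup F_{01}$ equal $1$ while those in $F'_0 \cup F'_{01}$ equal $0$, which forces the containments $F_1 \cup F_{01} \subseteq F'_1$ and $F'_0 \cup F'_{01} \subseteq F_0$; combined with $|F_{01}| + |F'_{01}| = n$, these force $F_1 = F'_0 = \emptyset$, $F_{01} = F'_1$, and $F_0 = F'_{01}$, which is exactly the reciprocal condition. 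Hence in the non-reciprocal case, $F^+$ and $(F')^-$ are disjoint compact subsets of $\interval^n$, at positive $L^\infty$ distance, so for $\epsilon$ smaller than this distance, the neighborhoods $N_\epsilon(F^+)$ and $N_\epsilon((F')^-)$ are disjoint, yielding $\f_t(C) \cap \f_{-t}(C') = \emptyset$.

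For the reciprocal case, where $F$ is initial, $F'$ is terminal, $F^+ = F'$, $(F')^- = F$, and $F \cap F' = \{v\}$, the plan is to reduce to a fixed-point problem. Using the $F$-decomposition -- in which $F^-$ and $(F')^+$ are both trivial, so only the $F_0$- and $F_{01}$-coordinates are relevant -- the set $\f_t(C)$ can be expressed as the graph of a function $\phi_t = \sigma_t \circ s \circ \sigma_{-t}$ over the $F_0$-coordinates (where $s$ is the graph-defining function of $C$ and $\sigma_t$ is the coordinate-wise logistic flow), taking values close to $1$ in the $F_{01}$-directions. Symmetrically, $\f_{-t}(C')$ is the graph of a function $\psi_t$ over the $F_{01}$-coordinates, taking values close to $0$ in the $F_0$-directions. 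An intersection point is then a fixed point of $\psi_t \circ \phi_t$, and by a chain-rule computation the operator norms $\|D\phi_t\|$ and $\|D\psi_t\|$ are controlled by the Jacobian ratios bounded in Lemma~\ref{L: jacobian ratios}, which tend to $0$ as $t \to \infty$. So $\psi_t \circ \phi_t$ becomes a contraction on its domain and the Banach fixed-point theorem delivers the unique intersection point.

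The main obstacle will be the contraction estimate in the reciprocal case: combining the chain-rule expression $D\phi_t = \mathrm{diag}(\sigma_t'(y_{F_{01}})) \cdot Ds(y) \cdot \mathrm{diag}(1/\sigma_t'(y_{F_0}))$ (evaluated at the appropriate pre-flow coordinates) with Lemma~\ref{L: jacobian ratios} to obtain an operator-norm bound that is uniform on a neighborhood of the prospective fixed point, not merely pointwise. One must also verify that this fixed point lies in the interior of the parameter domain so that it corresponds to an honest intersection point; this will follow from the strict positivity of $\psi_t$'s values, which are logistic-flow images of coordinates originally in $(0,1)$ and hence bounded away from the boundary coordinate $0$.
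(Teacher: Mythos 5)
Your argument is correct but runs along a different track than the paper's in both halves of the lemma. For the non-reciprocal case, you isolate the combinatorial dichotomy that for complementary-dimensional faces $F^+ \cap (F')^- \neq \emptyset$ exactly when $(F,F')$ is reciprocal, and then conclude from disjointness of the compact sets $F^+$ and $(F')^-$; the paper instead splits into the cases ``$F$ not initial'' and ``$F'$ not terminal'' and uses a dimension count to trap $\f_{2t}(C)$ near a skeleton that $C'$ avoids. Your version is not only cleaner but actually covers a subcase the paper's split overlooks: a pair such as $F = (\{1\},\{2\},\emptyset)$, $F' = (\emptyset,\{2\},\{1\})$ in $\interval^2$ (the left and right edges) has complementary dimension with $F$ initial and $F'$ terminal yet is not reciprocal, so neither of the paper's two cases applies and the needed strict inequality $\dim F^+ < n - \dim F$ fails; your disjointness argument handles it without incident. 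For the reciprocal case, you recast the intersection as a fixed point of $\psi_t \circ \phi_t$ in the $F_0$-coordinates and show the composite is a contraction on a closed box using the chain rule and the Jacobian-ratio estimate of Lemma~\ref{L: jacobian ratios}; the paper instead positions $\f_t(C)$ and $\f_{-t}(C')$ as $C^1$-small perturbations of the faces $v^+$ and $v^-$ near $v$, using Lemmas~\ref{L: flow to initial and terminal faces}, \ref{L: jacobian ratios}, and \ref{L: domain flow}, and appeals to stability of transverse intersections. Both routes are valid: the fixed-point argument is more quantitative and delivers existence and uniqueness in a single stroke, while the perturbation argument is more geometric but leans on a stability principle that is standard but left implicit. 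The point requiring care in your version, which you correctly flag, is uniformity: you need a single closed box $K$ in the $F_0$-coordinates that $\psi_t \circ \phi_t$ maps into itself for large $t$, on which the contraction estimate is uniform, and which captures every candidate intersection point. The first two are supplied by $\norm{Ds}$ being bounded on the compact closure of the graph domain together with the uniform ratio bound of Lemma~\ref{L: jacobian ratios}; the third holds because any intersection point lies in $\f_{-t}(C')$, whose $F_0$-coordinates tend uniformly to $0$ and hence land in $K$ for $t$ large.
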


\begin{figure}[!h]
	\includegraphics[scale=.65]{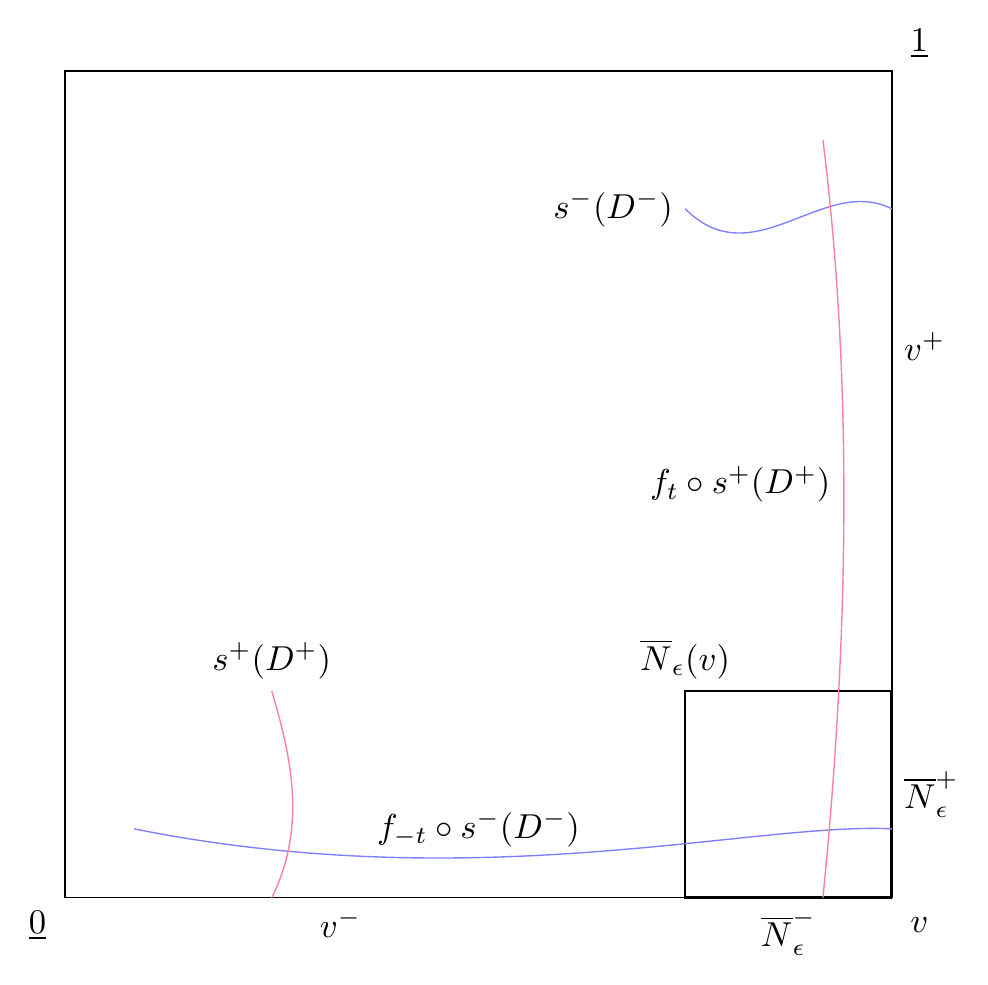}
	\caption{Proof of Lemma~\ref{L: flow intersection for graph-like nbhds}.}
	\label{F: intersection}
\end{figure}

\begin{proof}
	We refer the reader to Figure \ref{F: intersection} to accompany the proof.

	First suppose the pair $(F,F^\prime)$ is reciprocal.
	This is the case if and only if there is a vertex $v$ with $v^- = F$ and $v^+ = F^\prime$.
	Let $s^\pm \colon D^\pm \subseteq v^\pm \to \interval^n = v^- \times v^+$ be sections of $\pi_{v^\pm}^\perp$ such that $C = s^+(D^+)$ and $C^\prime = s^-(D^-)$.	
	Let us consider a closed $L^\infty$ $\epsilon$-neighborhood $\overline N_\epsilon(v)$ of $v$ in $\interval^n$ with $\epsilon$ small enough so that $L_\epsilon(v^\pm) \subset D^\pm$.
 	Write $\overline N_\epsilon^\pm = \overline N_\epsilon(v) \cap v^\pm$.
	Let us assume $t$ large enough so that $\norm{\f_{\pm t}(s^\pm(v)) - v} < \epsilon$, which is possible because $s^\pm(v)$ are in the interiors of $v^\pm$ by assumption and $v$ is the terminal vertex of $v^-$ and the initial vertex of $v^+$.
	Let $\overline N_t^\pm = \overline N_\epsilon(v) \cap \big(\f_{\pm t}\circ s^\pm (D^\pm)\big)$. As $\f_t$ flows each coordinate independently, $\f_t(C)$ remains a graph of $\f_t(D)$ for all $t$. Therefore, by Lemmas \ref{L: flow to initial and terminal faces}, \ref{L: jacobian ratios}, and \ref{L: domain flow}, for large enough $t$ the set $\overline N_t^\pm$ will be as small a perturbation as desired of $\overline N_\epsilon^\pm$ and additionally the tangent space at each point of $\overline N_t^\pm$ will be as small a perturbation as desired of the plane containing $v^\pm$. 
	Therefore, since $\overline N_\epsilon^+ \cap \overline N_\epsilon^- = \{v\}$, the stability of transverse intersections implies that there exists a unique $y_t \in \overline N_t^+ \cap \overline N_t^-$. The lemma follows in this case since there cannot be intersections outside $\overline N_\epsilon(v) = \overline N_\epsilon(v^+) \cap \overline N_\epsilon(v^-)$ since, by Lemma~\ref{L: flow to initial and terminal faces}, we can assume $t$ large enough so that $\f_t(C) \subseteq \overline N_\epsilon(v^+)$ and $\f_{-t}(C^\prime) \subseteq \overline N_\epsilon(v^-)$.

	Next we assume that the pair $(F,F^\prime)$ is not reciprocal (but still of complementary dimension). In particular, $F$ is not initial or $F^\prime$ is not terminal. 
	Let $r,u,r^\prime,u^\prime \in (0,1)$ such that $C \subseteq N_rU_u(F)$ and $C^\prime \subseteq N_{r^\prime}L_{u^\prime}(F^\prime)$, which is possible by the definition of graph-like neighborhoods.
	First suppose $F$ is not initial. 
	By Lemma~\ref{L: flow to initial and terminal faces} we have that for any $\epsilon > 0$ and all $t$ sufficiently large $\f_{2t}(C) \subseteq N_\epsilon(F^+)$.
	As $F$ is not initial, $\dim F^+ < n-\dim F$, so for large enough $t$ we have $\f_{2t}(C)$ contained in a neighborhood of the $n-\dim F-1$ skeleton of $\interval^n$. But now $\dim(C^\prime) = n-\dim F^\prime = \dim F$, and $C^\prime$ is supposed by definition to be transverse to $\interval^n$ and bounded away from the faces of $\interval^n$ it does not intersect.
	Thus for sufficiently large $t$ we have $\f_{2t}(C)\cap C^\prime = \emptyset$, and as $\f_{-t}$ is a diffeomorphism, applying it to both terms we obtain $\f_{t}(C) \cap \f_{-t} (C^\prime) = \emptyset$.
	The argument if $F^\prime$ is not terminal is analogous.
\end{proof}

The preceding lemma showed that for large enough $t$ the intersection $\f_t(C) \cap \f_{-t}(C^\prime)$ is empty unless the pair $(F,F^\prime)$ is reciprocal, in which case the intersection is a single point.
The next lemma determines how the sign of that intersection point, when it exists, depends on the co-orientations of $C$ and $C^\prime$. 

Recall that the faces of $\interval^n$ have a canonical orientation induced from the order of the basis $\{\e_1, \dots \e_n\}$.
Therefore, a \textbf{compatible co-orientation} can be defined for any graph-like neighborhood $C$ to be the normal co-orientation induced from the orientation of the face $F$ in complementary dimensions that $C$ intersects.
In other words, the compatible co-orientation of $C$ is determined by $\cI(C)(F) = +1$. Note that $C$ is diffeomorphic to its domain $D$ and so it is a manifold with corners. In general it will not be properly embedded so we abuse notation in writing $\cI(C)$, but the meaning should remain clear. 

Recall that for $v \in \vertices(\interval^n)$ the shuffle sign $\sh(v)$ is the sign of the shuffle permutation of $\{1, \dots, n\}$ placing the ordered free variables in $v^-$ before those in $v^+$. 

\begin{lemma} \label{L: sign for graph-like nbhds intersection}
	Let $v \in \vertices(\interval^n)$.
	If $C$ and $C^\prime$ are compatibly co-oriented graph-like neighborhoods of points respectively in the interiors of $v^-$ and $v^+$, then, for $t$ sufficiently large,
	\begin{equation}
	\label{E: sign flow intersection graph-like 1}
	\cI \big( \f_t(C) \times_{\interval^n} \f_{-t}(C^\prime) \big)
	\big( [\underline{0}, \underline{1}]^{\otimes n} \big) = \sh(v). \\
	\end{equation}
\end{lemma}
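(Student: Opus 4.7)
The plan is to use Lemma \ref{L: flow intersection for graph-like nbhds} to reduce the claim to computing the sign of a single intersection point. Since $(v^-, v^+)$ is reciprocal, that lemma provides a unique point $y \in \f_t(C) \cap \f_{-t}(C')$ for $t$ sufficiently large, and a direct check using the explicit flow formula of Lemma \ref{L: bernoulli equation} shows $y$ lies in the interior of $\interval^n$. Hence $\cI\bigl(\f_t(C) \times_{\interval^n} \f_{-t}(C')\bigr)\bigl([\underline 0, \underline 1]^{\otimes n}\bigr) \in \{\pm 1\}$, and the strategy is to identify this sign through the limiting behavior of tangent spaces and normal-bundle orientations at $y$.

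First I would identify the limiting tangent and normal subspaces at $y$. By Lemma \ref{L: jacobian ratios}, as $t$ grows the tangent space $T_y \f_t(C)$ converges, inside $T_y \interval^n$ trivialized by $\{\e_1, \ldots, \e_n\}$, to the coordinate plane $\operatorname{span}\{\e_j : j \in v^+_{01}\}$, namely $T_y v^+$; symmetrically, $T_y \f_{-t}(C')$ converges to $T_y v^-$. As these limits are complementary in $T_y \interval^n$, for large $t$ the normal subspaces $\nu_{\f_t(C), y}$ and $\nu_{\f_{-t}(C'), y}$ are likewise complementary, close respectively to $T_y v^-$ and $T_y v^+$.

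Next I would track how the compatible co-orientations transport under the flow. By definition the compatible co-orientation of $C$ orients $\nu_{C,p}$ so that, via the transverse identification $\nu_{C,p} \cong T_p v^-$, it matches the canonical orientation $\beta_{v^-}$. Because $v^-$ is $\f_t$-invariant and, by Corollary \ref{C: pushforward of vectors along the flow}, the restricted Jacobian $D\f_t|_{T v^-}$ is diagonal with strictly positive entries, the pushforward orientation at $\f_t(p)$ still corresponds to $\beta_{v^-}$ on $T_{\f_t(p)} v^-$. Transporting this continuously along the connected manifold $\f_t(C)$ to the nearby point $y$ and projecting onto $T_y v^-$, the discreteness of orientation sign forces agreement with $\beta_{v^-}$ once $t$ is large enough for Lemma \ref{L: jacobian ratios} to make the normal subspace sufficiently close to $T_y v^-$. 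A symmetric argument gives $\beta_{v^+}$ for the normal orientation of $\f_{-t}(C')$ at $y$.

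Assembling these ingredients via Theorem \ref{T: pull-back co-or}(4), the normal bundle of the zero-dimensional fiber product at $y$ is all of $T_y \interval^n$, oriented by the concatenation of the two normal orientations, which for large $t$ agrees with $\beta_{v^-} \wedge \beta_{v^+}$. Comparing this against the standard orientation $(\e_1, \ldots, \e_n)$ of $[\underline 0, \underline 1]^{\otimes n}$ yields precisely the shuffle sign $\sh(v)$, since for the vertex $v$ we have $\beta_v = \emptyset$. The main obstacle I anticipate is the bookkeeping required to justify the continuous transport of the normal orientation from $\f_t(p)$ to $y$: one must argue that the pushforward frame remains close enough to $\beta_{v^-}$ that its projection to $T_y v^-$ preserves the sign, which reduces to the stability of determinants under small perturbations together with the strict positivity of the logistic Jacobian entries.
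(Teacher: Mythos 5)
Your proof is correct and follows essentially the same route as the paper: reduce to a single intersection point via Lemma~\ref{L: flow intersection for graph-like nbhds}, observe that the logistic flow preserves co-orientations, and invoke Theorem~\ref{T: pull-back co-or} to identify the normal co-orientation of the fiber product with the concatenation $\beta_{v^-} \wedge \beta_{v^+}$, whose comparison with the standard orientation of $\interval^n$ is by definition $\sh(v)$. Your extra bookkeeping via Lemma~\ref{L: jacobian ratios} and Corollary~\ref{C: pushforward of vectors along the flow} supplies detail the paper leaves implicit in the phrase ``the logistic flow is an orientation-preserving diffeomorphism, it preserves co-orientation as well,'' but it does not change the argument.
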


\begin{proof}
	We start by noticing that the hypotheses imply that $C$ and $C'$ have complementary dimensions in $\interval^n$ since $v^-$ and $v^+$ do.

	By Lemma \ref{L: flow intersection for graph-like nbhds}, the fiber product $\f_t(C) \times_{\interval^n} \f_{-t}(C^\prime)$ is a single point.
	Since the logistic flow is an orientation-preserving diffeomorphism, it preserves co-orientation as well.
	In terms of ordered bases, by Theorem \ref{T: pull-back co-or} the co-orientation of the fiber product is determined by the orientation of the normal bundle of the intersection point determined by the ordered concatenation $\beta_- \cup \beta_+$, where $\beta_\pm$ is an ordered basis representing the orientation of $v^\pm$. From this the claim follows.
\end{proof}

\subsection{The proof of Theorem \ref{T: main theorem}}

In this section we prove our main theorem; let us first recall its statement.
Let $M$ be a cubulated closed manifold.
For $W, V \in \cman_\pf(M)$ and $t$ sufficiently large:
\begin{enumerate}
\item $\f_t(W)$ and $\f_{-t}(V)$ are transverse and
\begin{equation*}
\cI \big( \f_t(W) \times_M \f_{-t}(V) \big) =
\cI \big( \f_t(W) \big) \sms \cI \big( \f_{-t}(V) \big).
\end{equation*}
\item $\f_{-t}(W)$ and $\f_t(V)$ are transverse and
\begin{equation*}
\cI \big( \f_{-t}(W) \times_M \f_t(V) \big) =
(-1)^{|W||V|} \, \cI \big( \f_t(V) \big) \sms \cI \big( \f_{-t}(W) \big),
\end{equation*}
where $|W||V|$ is the product of the codimensions of $W$ and $V$ over $M$.
\end{enumerate}

The transversality statement was proven as Theorem~\ref{T: transversality}.
By Proposition~\ref{P: locality} and Corollary~\ref{C: intersection signs}, it suffices to consider pull-backs over an arbitrary $n$-face, where $n$ is the sum of the codimensions of $V$ and $W$ over $M$.
We identify this face with $\interval^n$ and, by abuse, denote the pull-backs of $V$ and $W$ over this face also by $V$ and $W$. 

Let us consider (1) first.
Since $W$ is transverse to all faces of $\interval^n$ in particular we have that its intersection with faces of complementary dimension are discrete.
By the Implicit Function Theorem, for any such $F$ there are local neighborhoods in $W$ of the points in $W \cap F$ which are graph-like.
Furthermore, the pull-back $W \times_{\interval^n} F$ can be used to parameterize these graph-like neighborhoods, which we assume equipped with the co-orientation induced from $W$.
We use this co-orientation to endow $W \times_{\interval^n} F$ with a sign function,
sending an element to $+1$ if the orientation of the normal bundle to the graph-like neighborhood given by its co-orientation agrees 
with the orientation of $F$ and to $-1$ if not.
A similar situation applies when considering $V$ and faces of dimension $\dim W$.

Let $\Ginit_W$ (resp. $\Gterm_V$) be the union of the parameterizing sets of these neighborhoods over initial (resp. terminal) faces. Explicitly, 
\begin{equation*}
\Ginit_W\ =
\bigcup_{F \in \init_{\dim V}(\interval^n)} W \times_{\interval^n} F
\qquad \text{ and } \qquad
\Gterm_W\ =
\bigcup_{F^\prime \in \term_{\dim W}(\interval^n)} V \times_{\interval^n} F^\prime.
\end{equation*}
Moreover, let
\begin{equation*}
Z_W^\init =\, W\ \setminus \bigcup_{\Ginit_W} C
\qquad \text{ and } \qquad
Z_V^\term =\, V\ \setminus \bigcup_{\Gterm_V} C^\prime.
\end{equation*}

If $\dim V=n$, then $Z_W^\init=\emptyset$, and similarly if $\dim V=0$ then $Z_V^\term=\emptyset$. 
So we focus on $\dim V<n$.
We notice that $Z_W^\init$ is contained in the complement of a neighborhood $\init_{\dim V}(\interval^n)$. 
Therefore, no point in $Z_W^\init$ can have $n-\dim V$ (or more) coordinates equal to $0$, and in fact we can assume there is an $\eta>0$ 
so that every point of $Z_W^\init$ has more than $\dim V$ coordinates that are larger than $\eta$. 
So, for any $\epsilon>0$ we can choose a large enough $t$ to ensure that every point of $\f_t(Z_W^\init)$ has more than $\dim V$ coordinates greater than 
$1-\epsilon$. In other words, $\f_t(Z_W^\init)$ can be contained in any given neighborhood of $\term_{n-\dim V-1}(\interval^n)$. 

As $V$ maps properly and transversely to $\interval^n$, there is a neighborhood of the $n-\dim(V)-1$ skeleton of $\interval^n$ that is disjoint from $V$, and so we have shown that $\f_t(Z_W^\init) \cap V = \emptyset$ for large enough $t$.
Replacing $t$ with $2t$ in the preceding sentence and then applying $\f_{-t}$ to $\f_{2t}(Z_W^\init)$ and $V$ we obtain that $\f_t(Z_W^\init) \cap \f_{-t}(V) = \emptyset$ for large enough $t$.
Similarly, we can find $t$ large enough so that $\f_{t}(W) \cap \f_{-t}(Z_V^\term) = \emptyset$.
This shows that
\begin{equation}\label{E: reduction to C}
\f_t(W) \times_{\interval^n} \f_{-t}(V) = \f_t \left( \bigcup_{\Ginit_W} C \right) \times_{\interval^n} \f_{-t} \left( \bigcup_{\Gterm_V} C^\prime \right).
\end{equation}

Lemma \ref{L: flow intersection for graph-like nbhds} now implies that for $t$ large enough the discrete set \eqref{E: reduction to C} is in bijection with the discrete set
\begin{equation*}
S\ \ =\!\! \bigcup_{v \in \vertices(\interval^n)} \left(W \times_{\interval^n} v^-\right) \times \left(V \times_{\interval^n} v^+\right) \ \subseteq\ \Ginit_W \times \Gterm_V.
\end{equation*}
Considering $\left(W \times_{\interval^n} v^-\right)$ and $\left(V \times_{\interval^n} v^+\right)$ as signed sets as just above, we define a sign function on $S$ by sending a pair $(\xi, \eta) \in S$ to the product of the sign of $\xi \in \left(W \times_{\interval^n} v^-\right)$, the sign of $\eta \in \left(V \times_{\interval^n} v^+\right)$, and the shuffle sign of $v$.
As described in Lemma~\ref{L: sign for graph-like nbhds intersection}, this sign function makes the bijection between $\f_t(W) \times_{\interval^n} \f_{-t}(V)$ and $S$ sign-preserving.

As we now have a signed bijection of $\f_t(W) \times_{\interval^n} \f_{-t}(V)$ and $S$, comparing with the Serre diagonal in Proposition~\ref{P: diagonal in terms of vertices} gives
\begin{align*}
\cI (\f_t(W) \times_{\interval^n} \f_{-t}(V)) \left([\underline{0}, \underline{1}]^{\otimes n}\right) = &
\sum_{v \in \vertices(\interval^n)} \sh(v) \cdot \cI(\f_t(W))(v^-) \cdot \cI(\f_{-t}(V))(v^+) \\ = & \
\cI(\f_t(W)) \sms \cI(\f_{-t}(V)) \left([\underline{0}, \underline{1}]^{\otimes n}\right)
\end{align*}
for $t$ sufficiently large.

To prove (2), we first interchange the roles of $V$ and $W$ in (1) to obtain
\begin{equation*}
\cI \big( \f_t(V) \times_M \f_{-t}(W) \big) =
\cI \big( \f_t(V) \big) \sms \cI \big( \f_{-t}(W) \big).
\end{equation*}
But now $\f_t(V) \times_M \f_{-t}(W)=(-1)^{|V||W|}\f_{-t}(W) \times_M \f_{t}(V)$ by Theorem~\ref{T: pull-back co-or}.
\hfill\qedsymbol

\bibliographystyle{alpha}
\bibliography{flows}

\end{document}